\newtheorem{lemma}{Lemma}[section]
\newtheorem{prop}[lemma]{Proposition}
\newtheorem{thm}[lemma]{Theorem}
\newtheorem{cor}[lemma]{Corollary}
\newtheorem{conj}[lemma]{Conjecture}
\theoremstyle{definition}
\newtheorem{defn}[lemma]{Definition}
\newtheorem{remark}[lemma]{Remark}
\newtheorem{example}[lemma]{Example}
\def\phi{\varphi}
\def\theta{\vartheta}
\def\epsilon{\varepsilon}
\let\tensor\varotimes
\let\oplus\varoplus
\def\iso{\cong}
\newcommand\defeq{:=}
\newcommand\set[2][auto]{
     \ifthenelse{\equal{#1}{auto}}{\left\lbrace}{\csname #1\endcsname\lbrace} #2 
     \ifthenelse{\equal{#1}{auto}}{\right\rbrace}{\csname #1\endcsname\rbrace} }
\def\mathbb{\mathds}
\newcommand\bbN{{\mathbb{N}}}
\newcommand\bbZ{{\mathbb{Z}}}
\newcommand\bbQ{{\mathbb{Q}}}
\newcommand\bbR{{\mathbb{R}}}
\newcommand\bbC{{\mathbb{C}}}
\newcommand\bbF{{\mathbb{F}}}
\newcommand\bbP{{\mathbb{P}}}
\newcommand\scrO{\mathscr O}
\newcommand\calB{\mathcal B}
\newcommand\calE{\mathcal E}
\newcommand\frakX{{\mathfrak{X}}}
\newcommand\frakg{{\mathfrak{g}}}
\newcommand\frako{\mathfrak o}
\newcommand\longto{\longrightarrow}
\newcommand\isoto{\mbox{$\hspace{7.5pt}\raise 3pt\hbox{$\sim$}\hspace{-17pt}\longrightarrow\hspace{3pt}$}\linebreak[0]}
\newcommand\isoot{\mbox{$\hspace{8.5pt}\raise 3pt\hbox{$\sim$}\hspace{-18pt}\longleftarrow\hspace{3pt}$}\linebreak[0]}
\newcommand\into{\hookrightarrow}
\newcommand\longinto{\lhook \joinrel \longrightarrow}
\newcommand\onto{\to\nolinebreak\hspace{-9.5pt}\to}
\newcommand\longonto{\longto\hspace{-9.5pt}\to}
\newcommand\To[1]{\stackrel {#1}\to}
\newcommand\Longto[1]{\stackrel {#1}\longto}
\DeclareMathOperator{\preim}{pre\kern0.13em im} %
\DeclareMathOperator{\preker}{pre\kern0.13em ker} %
\DeclareMathOperator{\precoker}{pre\kern0.13em coker} %
\renewcommand{\projlim}{\operatorname*{\underleftarrow{\lim}}}
\begin{document}
\title{Monodromy groups of vector bundles on p-adic curves}
\author{Ralf Kasprowitz} 

\address{Universit\"at Paderborn, Fakult\"at f\"ur Elektrotechnik, Informatik und Mathematik,
Institut f\"ur Mathematik, Warburger Str. 100, 33098 Paderborn, Germany}

\email{kasprowi@math.upb.de}

\begin{abstract}In \cite{dw2} and \cite{dw3}, C. Deninger and A. Werner developed a partial $p$-adic analogue of the classical Narasimhan-Seshadri correspondence between vector bundles and representations of the fundamental group. We will investigate the various monodromy groups that occur in this theory, that is the image of these representations and their Zariski closure as well as the Tannaka dual group of these vector bundles.    \end{abstract}

\maketitle

\section{Introduction}

There are several approaches to extend the classical correspondences between vector bundles and representations of the fundamental group to the $p$-adic case. There is the work of C. Deninger and A. Werner, mainly in \cite{dw2}, that partially extends the Narasimhan-Seshadri correspondence for vector bundles on Riemannian surfaces to $p$-adic curves. In \cite{fal}, G. Faltings even developes a $p$-adic analogue of the Simpson correspondence for Higgs bundles.
\medskip

In this paper, we will use the explicit construction of Deninger and Werner to investigate the monodromy groups of the bundles to which one can attach a representation of the \'etale fundamental group. More precicely, we are concerned with the structure of the image of this representation, its Zariski closure and the Tannaka dual group associated to such vector bundles. The image of the associated representation is called the analytic monodromy group, while its Zariski closure and the Tannaka dual group are the algebraic monodromy groups.\medskip

We now describe the main results of this paper. Let $X$ be a smooth, connected and projective curve over $\overline{\bbQ}_p$ and $E$ a vector bundle on $X_{\bbC_p}$. If there is a model $\frakX$ of the curve $X$ and a model $\calE$ on $\frakX$, such that the reduction modulo $p^q$ of the bundle $\calE$ is trivial for some $q \in \bbQ^+$, we call the vector bundle $E$ trivial modulo $p^q$. If $E$ is semistable of degree 0, we denote by $G_E$ the Tannaka dual group of the Tannaka subcategory generated by $E$. We prove the following Theorem:

\begin{thm} \label{result1}
 Let $E$ be a vector bundle of rank $r$ on the smooth, connected and projective curve $X_{\bbC_p}$. If $E$ is trivial modulo $p^q$, with $q > \frac{1}{p-1}$ for odd primes $p$ and $q \geq 1$ for $p=2$, then the algebraic monodromy group $G_E$ is connected.
\end{thm}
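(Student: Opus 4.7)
The plan is to exploit the Tannakian characterization of connectedness together with the torsion-freeness of small enough principal congruence subgroups of $GL_r(\mathcal{O}_{\bbC_p})$. By Tannaka duality, $G_E$ is connected if and only if $\pi_0(G_E)=1$, which in turn is equivalent to the following statement: every object $F$ in the Tannaka subcategory $\langle E\rangle$ (i.e.\ every sub-quotient of some $E^{\otimes a}\otimes (E^\vee)^{\otimes b}$) whose associated representation $\rho_F\colon \pi_1^{\et}(X_{\bbC_p},\bar x)\to GL(F|_{\bar x})$ has finite image is isomorphic to a direct sum of trivial bundles. So it suffices to exclude the existence of a non-trivial $F\in\langle E\rangle$ with finite monodromy.

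The key geometric input is that, after a suitable choice of invariant $\mathcal{O}_{\bbC_p}$-lattice in the fiber $E|_{\bar x}$, the Deninger-Werner representation attached to $E$ takes values in the principal congruence subgroup
\[
U_q \defeq I + p^q M_r(\mathcal{O}_{\bbC_p}) \;\subset\; GL_r(\mathcal{O}_{\bbC_p}).
\]
This is exactly the translation of the hypothesis: if the model $\calE$ on $\frakX$ is trivial modulo $p^q$, then parallel transport (as constructed in \cite{dw2}) computed in a frame coming from a trivialization of $\calE\bmod p^q$ consists of matrices congruent to the identity modulo $p^q$. Under the numerical hypothesis $q>\tfrac{1}{p-1}$ (resp.\ $q\geq 1$ if $p=2$), the $p$-adic logarithm gives a homeomorphism
\[
\log\colon U_q \isoto p^q M_r(\mathcal{O}_{\bbC_p}),
\]
with inverse $\exp$, so $U_q$ is a torsion-free pro-$p$ group.

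The property ``image contained in a principal congruence subgroup $I+p^q M_s(\mathcal{O}_{\bbC_p})$'' is preserved by tensor products and duals (by direct expansion: $(I+p^qA)\otimes(I+p^qB) \in I+p^q\cdot\Endo$), and also by passage to sub-quotients upon choosing a saturated invariant lattice. Hence for every $F\in\langle E\rangle$ the representation $\rho_F$ lands in a torsion-free pro-$p$ group. If $F$ had finite monodromy, $\rho_F$ would have finite image in a torsion-free group, forcing $\rho_F$ to be trivial and $F$ to be a direct sum of trivial bundles. This rules out the existence of non-trivial finite-monodromy objects in $\langle E\rangle$, and $G_E$ is therefore connected. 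The main obstacle in this strategy is the first step: promoting the algebraic hypothesis ``$\calE$ trivial mod $p^q$'' to an integral statement about the image of $\rho_E$. Carrying this out requires a careful reading of the Deninger-Werner construction---identifying an integral model of the parallel-transport data compatible with a trivialization of $\calE\bmod p^q$---and also verifying that the inheritance of the $U_q$-image condition through sub-quotients holds uniformly over all objects of $\langle E\rangle$, with compatible lattice choices.
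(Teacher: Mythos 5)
Your proposal follows the same skeleton as the paper's proof. The ``key geometric input'' you identify is exactly Proposition \ref{prop2} of the paper (the Deninger--Werner representation, computed in a frame coming from a trivialization of $\calE$ modulo $p^q$, lands in $1_r + p^qM_r(\frako)$), and the paper likewise reduces connectedness to the non-existence of a nontrivial finite-image object in the Tannaka subcategory, using that the congruence condition propagates through duals, tensor products and subquotients. The only substantive difference is the final step: where you invoke torsion-freeness of $U_q = 1_r + p^qM_r(\frako)$ via the mutually inverse $\log$ and $\exp$, the paper argues elementwise with eigenvalues (Lemma \ref{lemma1}): an eigenvalue congruent to $1$ modulo $p^q$ cannot be a nontrivial root of unity, and a nontrivial unipotent element has infinite order in characteristic $0$.

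There is, however, a concrete problem with your $\log$/$\exp$ step at the boundary case $p=2$, $q=1$, which the theorem explicitly includes. The series $\exp$ converges on $p^qM_r(\frako)$ only for $q > \frac{1}{p-1}$; for $p=2$, $q=1$ there is no homeomorphism $\log: U_1 \to 2M_r(\frako)$ (indeed $\log$ is not injective on $1+2{\bbZ}_2$, since $\log(-1)=0$), and $U_1$ is genuinely not torsion-free: $-1_r \in 1_r + 2M_r(\frako)$ has order $2$. So your argument establishes the theorem for $q>\frac{1}{p-1}$ uniformly in $p$, but not the extra case $p=2$, $q=1$. (For what it is worth, the paper's own eigenvalue argument is delicate at exactly the same point: by its Lemma \ref{lemma1} the primitive second root of unity $-1 = 1-\lambda$ has $\abs{\lambda} = 2^{-1}$, which is not excluded by the bound $\abs{\lambda}\leq 2^{-q}$ when $q=1$.) For odd $p$, and for $p=2$ with $q>1$, your argument is complete modulo the integral-lattice bookkeeping that you correctly flag as the technical heart of the matter and that the paper carries out in Proposition \ref{prop2}.
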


We will apply this Theorem to the restrictions of certain stable vector bundles on the projective space $\bbP^r_{\bbC_p}$ and obtain:

\begin{thm} \label{result2}
 Let $E$ be a stable vector bundle of degree $0$ and rank $r$ on the projective space $\bbP^r_{\bbC_p}$ having a resolution $$0 \longto \bigoplus_{i = 1}^c \scrO(a_i) \longto \bigoplus_{j = 1}^{c+r}\scrO(b_j) \longto E \to 0.$$ Let $X \subset \bbP^r_{\overline{\bbQ}_p}$ be a smooth connected curve of degree $d>>0$. If the restriction of $E$ to $X_{\bbC_p}$ is trivial modulo $p^q$ for some $q$ as in Theorem \ref{result1}, one has:

\begin{enumerate}
\item The vector bundle $E$ on the smooth projective curve $X_{\bbC_p}$ is stable and lies in the category $\mathfrak{B}^s_{X_{\bbC_p}}$.
\item The Tannaka dual group $G_E \subset \mathbf{GL}_{E_x}$ of $E$ restricted to the curve $X_{\bbC_p}$ is connected and semisimple. All components of $G_E$ are of type $A$.

\item If $\dim(E_x^{\tensor r})^{G_E} = \dim\Gamma(X_{\bbC_p}, E^{\tensor r}) \leq 3$ or if $r$ is some prime-power, then $G_E$  is almost simple of type $A$. 
\end{enumerate}
\end{thm}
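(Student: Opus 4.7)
The plan is to establish the three parts sequentially, each reducing to Theorem \ref{result1}, a classical restriction theorem for stable sheaves, and Tannakian arguments in the categories of Deninger-Werner.

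For part (1), stability of $E\vert_{X_{\bbC_p}}$ should follow from a Mehta-Ramanathan-type restriction theorem: if $X$ is a smooth curve of degree $d \gg 0$ in $\bbP^r$, then stability of $E$ on $\bbP^r$ descends to stability on $X$. Membership in $\mathfrak{B}^s_{X_{\bbC_p}}$ is essentially built into the triviality-modulo-$p^q$ hypothesis, which is precisely the condition qualifying a bundle for the Deninger-Werner category; combined with stability and degree zero this places $E\vert_{X_{\bbC_p}}$ in $\mathfrak{B}^s$.

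For part (2), connectedness of $G_E$ is an immediate application of Theorem \ref{result1}. To show semisimplicity, I would first observe that, since $\mathfrak{B}^s$ consists of strongly semistable bundles of degree $0$, the Tannakian subcategory generated by the stable bundle $E\vert_X$ is semisimple, so $G_E$ is reductive. The degree-zero condition combined with the resolution forces $\sum a_i = \sum b_j$, hence $\det E \iso \scrO_{\bbP^r}$ and $\det(E\vert_X)$ is trivial, giving $G_E \sub \mathbf{SL}(E_x)$. Stability of $E\vert_X$ makes $E_x$ an irreducible $G_E$-representation, so by Schur's lemma $Z(G_E)^0$ acts by scalars; trivial determinant then constrains these to the finite group $\mu_r$, forcing $Z(G_E)^0 = 1$ and $G_E$ semisimple. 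For the "type $A$" assertion, one argues via the resolution: any simple factor of $G_E$ of non-$A$ type preserves an additional invariant tensor (a bilinear form for types $B, C, D$; special invariants for exceptional types) on its corresponding tensor factor, which via Tannaka duality descends to a morphism of bundles that must already exist on $\bbP^r$. A direct inspection of the resolution shows that such invariants are absent for a suitable choice of exponents $a_i, b_j$.

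For part (3), Tannakian duality identifies $(E_x^{\tensor r})^{G_E}$ with $\hom_{\mathfrak{B}^s}(\mathbf{1}, E^{\tensor r}) \iso \Gamma(X_{\bbC_p}, E^{\tensor r})$. If $G_E$ were not almost simple, being semisimple of type $A$ it would decompose up to isogeny as a non-trivial product $G_1 \times \cdots \times G_k$ ($k \geq 2$) with $G_i \iso \mathbf{SL}_{n_i}$ ($n_i \geq 2$), and the irreducible $E_x$ would factor as $V_1 \tensor \cdots \tensor V_k$ with $V_i$ the standard representation, so $r = n_1 \cdots n_k$. A Schur-Weyl calculation then yields
\[
\dim (E_x^{\tensor r})^{G_E} \;=\; \prod_{i=1}^{k} \dim M_{(m_i^{n_i})}, \qquad m_i = r/n_i,
\]
where $M_\lambda$ is the Specht module of shape $\lambda$. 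Since $n_i, m_i \geq 2$ for all $i$ when $k \geq 2$, each factor is at least $\dim M_{(2,2)} = 2$, giving a total of at least $2^k \geq 4$ and contradicting the hypothesis $\leq 3$. For prime-power rank $r = p^a$, the factors $n_i$ must all be powers of $p$; a compatibility argument using the rigidity of the resolution on $\bbP^r$ rules out any non-trivial tensor decomposition, forcing $k = 1$.

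The principal obstacle is the "type $A$" assertion in part (2), which requires controlling the Tannakian structure via the explicit two-term resolution on $\bbP^r$; this is the most delicate step, and the prime-power case of part (3) likely relies on a separate rigidity argument on $\bbP^r$ that avoids the Schur-Weyl lower bound.
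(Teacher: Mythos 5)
Your outline of part (1) and of the connectedness/semisimplicity portion of part (2) matches the paper: Langer's effective restriction theorem gives stability of $E|_{X_{\bbC_p}}$, the triviality mod $p^q$ places the bundle in $\mathfrak{B}^s_{X_{\bbC_p}}$ and (via Theorem \ref{result1}) makes $G_E$ connected, and the argument $\det E \iso \scrO$, $E_x$ irreducible, Schur's lemma is exactly Proposition \ref{halbeinfach}. The genuine gap is in your treatment of the ``type $A$'' assertion. Your plan is to exclude non-$A$ simple factors because they ``preserve a bilinear form (for $B,C,D$) or special invariants (exceptional types)''. This only works for the types whose longest Weyl element satisfies $w_0=-1$ (namely $A_1$, $B_l$, $C_l$, $D_l$ with $l$ even, $E_7$, $E_8$, $F_4$, $G_2$): for those, every irreducible module is self-dual and an invariant appears already in $V^{\tensor 2}$. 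But $D_l$ with $l$ odd and $E_6$ admit irreducible representations that are \emph{not} self-dual and carry no invariant bilinear form, and these two families are exactly where the real work lies. The paper's route is: Lemma \ref{nkleinerr} shows $\Gamma(\bbP^r_K,E^{\tensor n})=0$ for all $0<n<r$ (this holds for \emph{every} stable kernel bundle with the given resolution, because stability forces all $a_i,b_j<0$ --- it is not a matter of ``a suitable choice of exponents''); for $d\gg 0$ this vanishing restricts to $X$, so $(E_x^{\tensor n})^{\frakg_E}=0$ for $n<r$; and then Proposition \ref{invar} shows, via Freudenthal's multiplicity formula, explicit highest-weight manipulations (Lemmas \ref{gewicht}, \ref{kompo}) and even some computer verification for small $E_6$-weights, that a semisimple Lie algebra with a faithful irreducible $r$-dimensional module satisfying this vanishing cannot have a $D_{\mathrm{odd}}$ or $E_6$ component, because a self-dual submodule (hence an invariant) is eventually produced in $V^{\tensor n}$ for some $n<r$. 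Your proposal contains no substitute for this step.

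Part (3) also has a gap: you assume each tensor factor $V_i$ is the \emph{standard} representation of $\mathbf{SL}_{n_i}$ before running Schur--Weyl. That the $V_i$ are standard representations is precisely what the paper only \emph{conjectures} (Conjecture \ref{verm} and Corollary \ref{korverm}); it is not available at this point. The paper's actual argument avoids it: writing $\frakg_E=\frakg_1\oplus\frakg_2$ and $V=V_1\tensor V_2$ with $r_i=\dim V_i\ge 2$, one uses $V_1^*\subseteq V_1^{\tensor r_1-1}$ (from the determinant invariant) to embed $\mbox{End}(V_1^{\tensor r_2})$ into $V_1^{\tensor r_1r_2}$, and reducibility of $V_1^{\tensor r_2}$ gives at least $2$ invariants from each factor, hence $\dim(V^{\tensor r})^{\frakg_E}\ge 4$. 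For $r$ a prime power the paper's argument is a one-line divisibility observation ($r_1\mid r_2$ forces a nonzero invariant in $V^{\tensor r_2}$ with $r_2<r$, contradicting the vanishing from Lemma \ref{nkleinerr}), not the unspecified ``rigidity of the resolution'' you defer to. Both defects are repairable only by supplying the missing representation-theoretic content, which is the core of the paper's Sections 4 and 5.
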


The last result concerns the analytic monodromy group. It is a natural question under which conditions the image $G_{\rho_E}$ of the associated representation of the fundamental group $\rho_E: \pi_1(X,x) \to \mbox{GL}(E_x)$ is algebraic, i.e. such that there exists a change of basis of the $\bbC_p$-vector space $E_x$ where $G_{\rho_E}$ lies in $\mbox{GL}_r(\overline{\bbQ}_p)$.

\begin{thm} \label{result3}
 Let $E$ be a polystable vector bundle with potentially strongly semi\-stable reduction of degree $0$ on a smooth, connected and projective curve $X_{\bbC_p}$, such that some power of the determinant bundle is trivial. Furthermore, let $\rho_E$ be a semisimple representation. Then $G_{\rho_E}$ is virtually algebraic if and only if $G_{\rho_E}$ is $p$-adic analytic.
\end{thm}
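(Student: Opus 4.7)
The ``only if'' direction is essentially formal. Suppose a finite-index subgroup $H\leq G_{\rho_E}$ lies, after a change of basis of $E_x$, inside $\mbox{GL}_r(\overline{\bbQ}_p)$. Since $\pi_1(X,x)$ is topologically finitely generated, so is $H$, and hence $H\subset \mbox{GL}_r(K)$ for some finite extension $K/\bbQ_p$. Its Zariski closure $\overline{H}\subset\mbox{GL}_{r,K}$ is a $K$-algebraic group whose $K$-points form a $p$-adic Lie group, and the $p$-adic closure of $H$ inside $\overline{H}(K)$ is a closed $p$-adic analytic subgroup. Since $[G_{\rho_E}:H]<\infty$, $G_{\rho_E}$ itself is $p$-adic analytic.

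For the converse, assume $G_{\rho_E}$ is $p$-adic analytic. Under the hypotheses of the theorem the algebraic monodromy group $G_E\subset\mathbf{GL}_{E_x}$ is reductive and $G_{\rho_E}$ is Zariski dense in $G_E(\bbC_p)$. The argument proceeds in two steps. First, descend $G_E$ together with its tautological representation to a reductive $\overline{\bbQ}_p$-group $G_E^{\overline{\bbQ}_p}\subset\mathbf{GL}_{E_x^{\overline{\bbQ}_p}}$ satisfying $G_E\iso G_E^{\overline{\bbQ}_p}\otimes_{\overline{\bbQ}_p}\bbC_p$; this uses that $X$ and its chosen model are defined over $\overline{\bbQ}_p$ and that, after a finite cover made available by the potentially strongly semistable reduction hypothesis, the Deninger--Werner parallel transport is equivariant with respect to the $\overline{\bbQ}_p$-structure, producing a $\overline{\bbQ}_p$-rational fibre functor on the Tannaka subcategory generated by $E$. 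Second, consider the $\bbQ_p$-Lie algebra $\frakg = \mathrm{Lie}(G_{\rho_E})\subset\mathrm{End}(E_x)$, finite-dimensional by the $p$-adic analytic hypothesis; Zariski density forces the surjection $\frakg\otimes_{\bbQ_p}\bbC_p \onto \mathrm{Lie}(G_E)$, and combining this with the $\overline{\bbQ}_p$-structure from step one places $\frakg$ inside $\mathrm{Lie}(G_E^{\overline{\bbQ}_p})\otimes_{\overline{\bbQ}_p} K$ for some finite extension $K/\bbQ_p$. The exponential then identifies an open neighbourhood of the identity in $G_{\rho_E}$ with a neighbourhood in $G_E^{\overline{\bbQ}_p}(K)$, so a finite-index open subgroup of $G_{\rho_E}$ lies inside $G_E^{\overline{\bbQ}_p}(K)\subset\mbox{GL}_r(\overline{\bbQ}_p)$, yielding virtual algebraicity.

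The main obstacle is the descent in step one: exhibiting the $\overline{\bbQ}_p$-rational structure on $G_E$ compatibly with its embedding into $\mathbf{GL}_{E_x}$. The hypothesis of potentially strongly semistable reduction is essential here, as it ensures that on a suitable finite cover the parallel transport can be made compatible with the $\overline{\bbQ}_p$-structure, yielding a $\overline{\bbQ}_p$-rational fibre functor. Polystability of degree zero together with the triviality of a power of $\det E$ then guarantees that this fibre functor produces a reductive group scheme, while the semisimplicity of $\rho_E$ makes the Lie algebra comparison in step two well-behaved. Carrying out these descent and compatibility checks precisely is where the bulk of the technical work is concentrated.
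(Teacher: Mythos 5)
Your ``only if'' direction is fine and matches the paper's (brief) treatment. The gap is in the converse, specifically in your step one. You propose to descend $G_E$ together with its tautological representation to $\overline{\bbQ}_p$ by producing a ``$\overline{\bbQ}_p$-rational fibre functor'' from equivariance of the Deninger--Werner parallel transport. No such descent is available: the parallel transport genuinely takes values in $\mbox{GL}(E_x)$ with $E_x$ a $\bbC_p$-vector space, and the Galois compatibility of the construction (Theorem \ref{galois}) only yields a \emph{semilinear} action, not a $\overline{\bbQ}_p$-form of $\omega_x$ compatible with the monodromy. If such a form always existed, $G_{\rho_E}$ would always be virtually algebraic, making the theorem unconditional and contradicting its ``if and only if'' shape -- so this step must secretly be using the $p$-adic analytic hypothesis, and you never say how. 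Moreover, even granting a $\overline{\bbQ}_p$-form $G_E^{\overline{\bbQ}_p}\subset\mathbf{GL}_{E_x}$ (which one does get when $X$, $x$ and $E$ are defined over $\overline{\bbQ}_p$, via Corollary \ref{corschnitt} and flat base change of the invariant tensors), your step two does not follow: a $\bbQ_p$-subspace $\frakg$ of $\mathrm{Lie}(G_E)$ over $\bbC_p$ need not lie in $\mathrm{Lie}(G_E^{\overline{\bbQ}_p})\otimes_{\overline{\bbQ}_p}K$ for any finite $K/\bbQ_p$ -- consider the $\bbQ_p$-line spanned by a matrix with transcendental entries. This is precisely where the real work lies.

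The paper's argument is intrinsic to the Lie algebra and needs no geometric descent. The hypotheses (via Propositions \ref{gleich} and \ref{halbeinfach}) force the $\bbC_p$-span $\frakg(\bbC_p)\subseteq\mathrm{End}(E_x)$ to be semisimple and equal to $\mathrm{Lie}(G_E)$; by Corollary \ref{genaudann} the $\bbQ_p$-Lie algebra $\frakg$ itself is then semisimple, hence splits over a finite extension $L/\bbQ_p$; and the faithful representation of the split semisimple $L$-Lie algebra $\frakg(L)$ on $E_x$ is determined by its highest weights, hence conjugate to one defined over $L$. Exponentiating the $\bbZ_p$-lattice $\Lambda(H)$ of a uniform open subgroup $H$ (Lemma \ref{lie}) then lands in $\mbox{GL}_r(L)$. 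This highest-weight rigidity for split semisimple Lie algebras is the missing ingredient you need in place of the fibre-functor descent.
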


Here virtually algebraic means that the group $G_{\rho_E}$ has a subgroup of finite index which already lies in $\mbox{GL}(\overline{\bbQ}_p)$, which is equivalent to the existence of a finite \'etale covering $f:Y \to X$ such that $G_{\rho_{f^*E}}$ is defined over $\overline{\bbQ}_p$. \bigskip

Let us now describe the content of this paper in more detail. In the second section we introduce the theory of Deninger and Werner. The most important results are explained as well as some more technical details of the construction, as far as these are necessary for the proofs of this paper.\medskip

The third section deals with algebraic monodromy groups of vector bundles on p-adic curves. It was already noted in \cite{dw3} that the Tannaka dual group of a vector bundle $E$ with potentially strongly semistable reduction of degree $0$ coincides with the Zariski closure of the image of the associated representation of the \'etale fundamental group if the representation is semisimple. We will give a proof of this fact here. But the main result of this section is Theorem \ref{result1} described above. This can be used for explicit calculations of the Tannaka dual group by computing global sections of $n$-fold tensor products of $E$, which will be done for certain kernel bundles in the fourth section. The third section ends with a nice criterion due to M. Larsen, which also allows to compute the Tannaka dual group of polystable vector bundles in several cases, see Proposition \ref{larsen}.\medskip

In the fourth section we consider stable kernel bundles of degree $0$ and rank $r$ on the projective space $\bbP^r$, that is vector bundles $E$ sitting in a short exact sequence $0 \to E \to \bigoplus_{i = 1}^c \scrO(a_i) \to \bigoplus_{j = 1}^{c+r}\scrO(b_j) \to 0$. We will show that for smooth curves of sufficiently high degree in $\bbP^r$ such that the Tannaka dual group of the restriction of the vector bundle $E$ is connected, this group has to be semisimple and all components are of type $A$. This is Theorem \ref{result2}. We conjecture that the group is almost simple, namely that one always obtains the standard $r$-dimensional representation of $\mbox{SL}_r$ or its dual.  Furthermore, we explain how to produce many examples of such bundles that satisfy the conditions of this theorem for a generic choice of a smooth connected curve of sufficiently high degree using the main result of the third section.\medskip

The fifth section is devoted to the central part of the proof of Theorem \ref{result2} using the representation theory of semisimple Lie algebras. We prove the following, see Proposition \ref{invar}: Let $\frakg$ be a semisimple Lie algebra over an algebraically closed field of characteristic $0$ and $V$ an $r$-dimensional, irreducible and faithful $\frakg$-module that satisfies $$(V^{\tensor n})^{\frakg} = \{0\} \mbox{ for }1 \leq n < r.$$ Then all components of $\frakg$ are of type $A$.\medskip

The last section deals with properties of the analytic monodromy group of a vector bundle with strongly semistable reduction of degree $0$. We will prove Theorem \ref{result3} above and give a criterion for a vector bundle to have a $p$-adic analytic monodromy group using the group-theoretical classification  of $p$-adic analytic groups by Lazard, see Corollary \ref{Laza}. It only involves counting the degree of certain trivializing coverings of the curve $X$, but it seems to be very hard to apply for a concretely given vector bundle. Finally, we exploit the compatibility of the Galois-actions on the category of vector bundles with potentially strongly semistable reduction and the category of representations of the fundamental group to give some further information about the analytic monodromy group. In several cases, this action can be described using results of J.-P. Serre and S. Sen on semilinear Galois representations of Hodge-Tate type. \medskip    

This paper is an outgrowth of my thesis. It is a pleasure to thank Christopher Deninger for his advice during that time. I am also grateful to Annette Werner and Torsten Wedhorn for valuable comments.     

\section{Vector bundles on $p$-adic curves}

In this section we will briefly sketch the main results of Deninger and Werner and give some technical details of their construction as far as this is important for the proofs in this paper. The main reference is \cite{dw2}. The following notations are used throughout the paper. Let $p$ be a prime number and let $\bbQ_p$ be the field of $p$-adic numbers. We denote by $\bbC_p := \widehat{\overline{\bbQ}}_p$ the completion of an algebraic closure of $\bbQ_p$ and by $\frako \subseteq \bbC_p$ its valuation ring with maximal ideal $\mathfrak{m}$ and residue field $k := \frako/\mathfrak{m} = \overline{\bbF}_p$. By a curve over a field $K$ we always mean a purely one-dimensional separated scheme of finite type over $K$. If $X$ is a smooth, connected and projective curve over $\overline{\bbQ}_p$, we call a finitely presented, proper and flat scheme $\frakX$ over $\overline{\bbZ}_p$ with generic fibre isomorphic to $X$ a model of the curve $X$. We denote by $X_{\bbC_p}, \frakX_{\frako}$ resp. $\frakX_k$ the base change $X \tensor_{\overline{\bbQ}_p}\bbC_p ,\frakX \tensor_{\overline{\bbZ}_p} \frako$ resp. $\frakX \tensor_{\overline{\bbZ}_p} k$.\medskip

By semistability of vector bundles of rank $r$ on a smooth, projective varieties of dimension $n$ over $\bbC_p$ we always mean slope semistability, i.e. we fix an ample line bundle $\scrO(1)$ on $X$ together with a corresponding divisor $H$ and define $\mu(E) := c_1(E)\cdot H^{n-1}/r$. We call $E$ slope $H$-semistable if for all proper torsion-free subsheaves $F \subset E$ the inequality $\mu(F) \leq \mu(E)$ holds (and stable if it is strictly smaller). For curves and the projective space $\bbP^n$, the slope $\mu$ is just the degree of the bundle divided by its rank.\medskip

 Finally, for a curve $C$ over $k$ let $F: C \to C$ be the absolute Frobenius. It is defined by the identity on the underlying topological space and the $p$-power map on the structure sheaf. It sits in the commutative diagram $$\xymatrix{ C \ar[r]^F \ar[d] & C \ar[d] \\ \mbox{spec}\,k \ar[r]^F & \mbox{spec}\,k.}$$ The lower horizontal morphism is induced by the Frobenius morphism $F: k \to k$. 

\begin{defn}
Let $C$ be a curve over $k$.
\begin{enumerate} \item If $C$ is smooth, connected and projective, we call a vector bundle $E$ on $C$ strongly semistable of degree $0$ if $\mbox{deg}(E) = 0$ holds and the pullback $F^{n*}E$ by the $n$-times iterated Frobenius morphism is semistable for all $n \geq 1$.
\item If $C$ is proper, with irreducible components $C_i$ for $i = 1,\dots,n$, and $\tilde{C}_i$ their normalizations with canonical morphisms $\alpha_i: \tilde{C}_i \to C_i \to C$, we call a vector bundle $E$ on $C$ strongly semistable of degree $0$, if all $\alpha_i^*(E)$ are strongly semistable of degree $0$ on the smooth, connected and projective curves  $\tilde{C}_i$. \end{enumerate} \end{defn}
We will now describe certain subcategories of vector bundles on the curve $X_{\bbC_p}$. 

\begin{defn} Let $E$ be a vector bundle on the smooth, connected and projective curve $X_{\bbC_p}$.\begin{enumerate} \item The vector bundle $E$ has strongly semistable reduction of degree $0$, if there exists a model $\frakX$ of $X$ and a vector bundle  $\mathcal{E}$ on $\frakX$, such that $E$ is isomorphic to the generic fibre of $\mathcal{E}$ and the special fibre  $\mathcal{E}_k$ is strongly semistable of degree $0$ on the proper $k$-curve $\frakX_k$. We denote the full subcategory of such bundles with $\mathfrak{B}^s_{X_{\bbC_p}}$. \item The vector bundle $E$ has potentially strongly semistable reduction of degree $0$, if there is a finite morphismus $\alpha: Y \to X$ of smooth, projective curves over $\overline{\bbQ}_p$, such that the pullback $\alpha^*_{\bbC_p}(E)$ has strongly semistable reduction of degree $0$ on the curve $Y_{\bbC_p}$. We denote by $\mathfrak{B}^{ps}_{X_{\bbC_p}}$ the full subcategory of these vector bundles. 
\end{enumerate} \end{defn}  

Furthermore, we will need certain subcategories of vector bundles on the model $\frakX_{\frako}$.

\begin{defn}
Let $R$ be a valuation ring with quotient field $Q$ and $\frakX$ a model of the smooth, connected and projective curve $X$ over $Q$. \begin{enumerate} \item The category $\mathcal{S}_{\frakX, D}$, where $D$ is a divisor on $X$, consists of proper, finitely presented $R$-morphisms $\pi: \mathcal{Y} \to \frakX$, such that the generic fibre $\pi_{Q}: \mathcal{Y}_{Q} \to X$ is finite and $\pi^{-1}_Q(X\backslash D) \to X\backslash D$ an \'etale morphism.  A morphism from $\pi_1: \mathcal{Y}_1 \to \frakX$ to $\pi_2: \mathcal{Y}_2 \to \frakX$ is a morphism $\phi: \mathcal{Y}_1 \to \mathcal{Y}_2$ with $\pi_1 = \pi_2 \circ \phi$. 
\item The full subcategory $\mathcal{S}^{good}_{\frakX, D} \subset \mathcal{S}_{\frakX,D}$ consists of morphisms as above, such that their structure morphism $\lambda: \mathcal{Y} \to \mbox{spec}\,R$ is flat and $\lambda_*\scrO_{\mathcal{Y}} = \scrO_{\mbox{spec}\,R}$ holds universally. Furthermore, the generic fibre $\lambda_Q: \mathcal{Y}_Q \to \mbox{spec}\,Q$ is smooth.
\end{enumerate}
\end{defn}

\begin{remark}
If $\pi: \mathcal{Y} \to \frakX$ lies in $\mathcal{S}^{good}_{\frakX,D}$, then $\mathcal{Y}_Q$ is a geometrically connected, smooth and projective curve. Furthermore, every object $\pi_1: \mathcal{Y}_1 \to \frakX$ in $\mathcal{S}_{\frakX, D}$ is strictly dominated by an object $\pi_2: \mathcal{Y}_2 \to \frakX$ in $\mathcal{S}^{good}_{\frakX, D}$, i.e. there is a morphism $\phi: \mathcal{Y}_1 \to \mathcal{Y}_2$, which induces an isomorphism between the local rings of two generic points (see \cite{dw2}, Theorem 1). 
\end{remark}

\begin{defn}
\begin{enumerate} \item
Let $\frakX$ over $\overline{\bbZ}_p$ be a model of the smooth, connected projective curve $X$ over $\overline{\bbQ}_p$ and $D$ a divisor on $X$. The category $\mathfrak{B}_{\frakX_{\frako, D}}$ is defined as the full subcategory of vector bundles $\calE$ on $\frakX_{\frako}$ with the property: For all $n \geq 1$ there is a morphism $\pi: \mathcal{Y} \to \frakX$ in $\mathcal{S}_{\frakX, D}$, such that $\pi^*_n\calE_n$ is the trivial bundle on $\mathcal{Y}_n$, where $\pi_n, \mathcal{Y}_n$ and $\calE_n$ denote the reductions modulo $p^n$. \item
The full subcategory $\mathfrak{B}_{X_{\bbC_p,D}}$ of vector bundles on $X_{\bbC_p}$ consists of vector bundles $E$ that are isomorphic to $j^*\calE$ for some $\calE$ in $\mathfrak{B}_{\frakX_{\frako, D}}$, where $\frakX$ is a model of $X$ and $j: X_{\bbC_p} \into \frakX_{\frako}$ the open embedding of the generic fibre. \end{enumerate}
\end{defn}  

\begin{remark}
By \cite{dw2}, Theorem 17, we have $\mathfrak{B}^s_{X_{\bbC_p}} = \bigcup_D\mathfrak{B}_{X_{\bbC_p,D}}$.
\end{remark}

Now let $x \in X(\bbC_p)$ be a closed point and denote by $F_x$ the functor from the category of finite \'etale coverings of $X$ into the category of finite sets that maps such a covering $X' \to X$ onto the $\bbC_p$-valued points $x'$ over $x$. For two such $\bbC_p$-valued points $x',x \in X(\bbC_p)$ we call an isomorphism $F_x \To{\sim} F_{x'}$ an \'etale path from  $x$ to $x'$. The \'etale fundamental groupoid  $\Pi_1(X)$ is defined as the category whose objects are the $\bbC_p$-valued points of $X$, and for $x,x' \in X(\bbC_p)$ we set $\mbox{Mor}_{\Pi_1(X)}(x,x') := \mbox{Iso}(F_x,F_{x'})$. In particular, $\mbox{Mor}_{\Pi_1(X)}(x,x)$ is the \'etale fundamental group $\pi_1(X,x)$ of the curve $X$.
In \cite{dw2}, Deninger and Werner attach to each vector bundle $E$ in $\mathfrak{B}^{ps}_{X_{\bbC_p}}$ a continuous functor $\rho_E: \Pi_1(X) \to\mathbf{Vect}(\bbC_p)$, where $\mathbf{Vect}(\bbC_p)$ is the category of finite-dimensional vector spaces over $\bbC_p$. The category of these functors is denoted by $\mathbf{Rep}_{\Pi_1(X)}(\bbC_p)$.

\begin{thm} \label{hauptthm}
Let $X$ and $X'$ be smooth, connected and projective curves over $\overline{\bbQ}_p$ and $f: X \to X'$ a morphism.
\begin{enumerate} \item The functor $$\rho: \mathfrak{B}^{ps}_{X_{\bbC_p}} \longto \mathbf{Rep}_{\Pi_1(X)}(\bbC_p)$$is exact, $\bbC_p$-linear and additive. It commutes with tensor products, duals and inner homomorphisms. For $x \in X(\bbC_p)$, the natural functor $\omega_x: \mathfrak{B}^{ps}_{X_{\bbC_p}} \to \mathbf{Vect}(\bbC_p)$, which maps a vector bundle $E$ onto its fibre $E_x$, is faithful.  \item Pullback of vector bundles induces an exact and additive functor $f^*: \mathfrak{B}^{ps}_{X_{\bbC_p}} \to \mathfrak{B}^{ps}_{X'_{\bbC_p}}$ that commutes with tensor products, duals and inner homomorphisms. Further, there exists a canonical functor $$A(f): \mathbf{Rep}_{\Pi_1(X)}(\bbC_p) \longto \mathbf{Rep}_{\Pi_1(X')}(\bbC_p),$$such that the diagram  $$\xymatrix{ \mathfrak{B}^{ps}_{X_{\bbC_p}} \ar[r]^-{\rho} \ar[d]^{f^*} & \mathbf{Rep}_{\Pi_1(X)}(\bbC_p) \ar[d]_{A(f)} \\ \mathfrak{B}^{ps}_{X'_{\bbC_p}} \ar[r]^-{\rho} & \mathbf{Rep}_{\Pi_1(X)}(\bbC_p)}$$ is commutative. \item The categories $\mathfrak{B}^s_{X_{\bbC_p}}$ and $\mathfrak{B}^{ps}_{X_{\bbC_p}}$ are abelian categories.
\end{enumerate}   
\end{thm}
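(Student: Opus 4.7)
The plan is to construct $\rho$ first on each of the concrete subcategories $\mathfrak{B}_{\frakX_{\frako,D}}$ (which, by the remark following that definition, together exhaust $\mathfrak{B}^s_{X_{\bbC_p}}$), and then extend to $\mathfrak{B}^{ps}_{X_{\bbC_p}}$ by finite descent. Fix $\calE\in\mathfrak{B}_{\frakX_{\frako,D}}$ with generic fibre $E$, and for each $n\geq 1$ choose a cover $\pi_n\colon \mathcal{Y}^{(n)}\to\frakX$ in $\mathcal{S}_{\frakX,D}$ such that $\pi_n^*\calE_n$ is trivial; the strict-domination remark lets us arrange the $\pi_n$ in $\mathcal{S}^{good}_{\frakX,D}$ so that they form a cofinal inverse system. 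Given a $\bbC_p$-point $x\in (X\setminus D)(\bbC_p)$, a choice of lift $y_n\in\mathcal{Y}^{(n)}(\bbC_p)$ of $x$ together with a trivialisation of $\pi_n^*\calE_n$ produces an identification $\calE_{n,x}\cong (\frako/p^n)^r$. An \'etale path $\gamma\in\mathrm{Mor}_{\Pi_1(X)}(x,x')$ acts on the set of such lifts and thus transports these identifications coherently, producing a matrix in $\mathrm{GL}_r(\frako/p^n)$; taking the inverse limit over $n$ and tensoring with $\bbC_p$ yields $\rho_E(\gamma)\colon E_x\to E_{x'}$. Extending from points outside $D$ to all of $X(\bbC_p)$ is done by a further limit argument, and the passage from $\mathfrak{B}^s$ to $\mathfrak{B}^{ps}$ is achieved by descent along a finite covering $Y\to X$ that brings $E$ into $\mathfrak{B}^s_{Y_{\bbC_p}}$, using the action of the covering group.

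For part (1), exactness, $\bbC_p$-linearity and compatibility with tensor products, duals and inner homomorphisms are checked level by level modulo $p^n$: after passing to a common refinement of the covers trivialising the bundles involved, each such claim reduces to a transparent statement about trivial bundles on $\mathcal{Y}^{(n)}_n$ (the tensor product of trivialisations trivialises the tensor product, dualisation preserves triviality, and kernels and cokernels of morphisms of trivial bundles become trivial after a further finite refinement). Faithfulness of the fibre functor $\omega_x$ is then automatic, since any morphism in $\mathfrak{B}^{ps}_{X_{\bbC_p}}$ intertwines parallel transport and is therefore determined by its restriction to any single fibre.

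For part (2), I choose a model $\frakX'$ of $X'$ so that $f$ extends to a morphism $\tilde f$ between models. The base change of each $\mathcal{Y}^{(n)}$ along $\tilde f$ then lies in the analogous category $\mathcal{S}$ for $\frakX'$ and trivialises $\tilde f^*\calE_n$, which shows that $f^*$ preserves both $\mathfrak{B}^s$ and $\mathfrak{B}^{ps}$ and inherits the formal properties established in part (1). The functor $A(f)$ is defined by restriction of representations along the morphism of fundamental groupoids induced by $f$, and commutativity of the diagram is checked directly, both sides being computed from the same base-changed tower of covers. For part (3), I combine three facts: semistable bundles of degree $0$ on a smooth projective curve over $k$ form an abelian subcategory of the coherent sheaves; strong semistability and degree zero are preserved by kernels and cokernels formed in that subcategory; and both properties pass through finite pullbacks. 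Lifting back to characteristic zero via the chosen models yields abelianness of $\mathfrak{B}^s_{X_{\bbC_p}}$, and descent along finite coverings gives it for $\mathfrak{B}^{ps}_{X_{\bbC_p}}$.

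The main obstacle is the construction of $\rho_E$ itself: one has to show that the maps $\rho_E(\gamma)$ are independent of the many auxiliary choices (the model $\frakX$, the inverse system $\{\mathcal{Y}^{(n)}\}$, the lifts $y_n$, the trivialisations of $\pi_n^*\calE_n$) and that they assemble into a continuous functor on the entire \'etale fundamental groupoid $\Pi_1(X)$. This requires a careful cofinality argument in the tower of good covers together with a systematic comparison of competing trivialisations via transition matrices in $\mathrm{GL}_r$. Once that has been carried out, the remaining assertions become essentially formal consequences of the properties of trivial bundles and of the categories $\mathcal{S}_{\frakX,D}$ introduced earlier.
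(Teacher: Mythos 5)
You should first be aware that the paper does not prove Theorem \ref{hauptthm} at all: immediately after the statement it records that these assertions are Theorem 33, Theorem 36 and Proposition 35 of \cite{dw2} together with Corollary 10 of \cite{dw3}, so the only meaningful comparison is with the construction carried out in those references. Measured against that, your outline follows the right strategy and names the correct ingredients: covers $\pi_n\colon\mathcal{Y}^{(n)}\to\frakX$ in $\mathcal{S}_{\frakX,D}$ trivializing the reductions $\calE_n$, passage to $\mathcal{S}^{good}_{\frakX,D}$ by strict domination, parallel transport along \'etale paths via the action on lifts of points, inverse limits over $n$, and finite descent from $\mathfrak{B}^s_{X_{\bbC_p}}$ to $\mathfrak{B}^{ps}_{X_{\bbC_p}}$. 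One substantive difference from the Deninger--Werner construction (visible in the paper's own proof of Proposition \ref{prop2}): the transport operators are not built from chosen trivializations of $\pi_n^*\calE_n$ compared by transition matrices, but from the evaluation isomorphisms $y_n^*\colon\Gamma(\mathcal{Y}^{(n)}_n,\pi_n^*\calE_n)\To{\sim}\calE_{x_n}$ at lifted points, which are isomorphisms precisely because $\lambda_*\scrO_{\mathcal{Y}}=\scrO_{\mbox{spec}\,R}$ holds universally for objects of $\mathcal{S}^{good}_{\frakX,D}$; one then sets $\rho_{\calE,n}(\gamma)=(\gamma y_n)^*\circ(y_n^*)^{-1}$. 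This device makes independence of the trivialization a non-issue and concentrates the well-definedness problem in cofinality of the tower and independence of the lift, which is where the real work lies in \cite{dw2}.

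Beyond that, your proposal is a road map rather than a proof: the items you flag as ``the main obstacle'' (independence of the model and of the tower, continuity on all of $\Pi_1(X)$, the extension from points outside $D$ to all of $X(\bbC_p)$, the descent datum for $\mathfrak{B}^{ps}$) are exactly the content of several long sections of \cite{dw2} and cannot be waved through. Part (3) in particular is not a formal consequence of the rest: one must show that kernels and cokernels of morphisms in $\mathfrak{B}^s_{X_{\bbC_p}}$ are again vector bundles (a degree argument for slope-zero semistable sheaves on a curve) \emph{and} that they again admit models with strongly semistable special fibre, which is the point of \cite{dw3}. So the approach is the right one, but essentially all of the load-bearing steps are deferred; as a citation-free proof it is incomplete.
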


These statements can be found in \cite{dw2}, Theorem 33, Theorem 36, Proposition 35 and \cite{dw3}, Corollary 10. There is in particular a faithful functor $\mathfrak{B}^{ps}_{X_{\bbC_p}} \to \mathbf{Rep}_{\pi_1(X,x)}(\bbC_p)$ that satisfies all properties of this theorem. It follows from a more general result of Faltings (see \cite{fal}, \S5) that this functor is even fully faithful. We will later make use of this fact several times. Furthermore, the functor $\rho$ in the theorem is compatible with actions of the absolute Galois group $\mbox{Gal}_{\bbQ_p}$. Since this action is used to investigate the analytic monodromy group of a bundle in the last section of this paper, we describe it here in some detail, again following \cite{dw2}. For $\sigma \in \mbox{Gal}_{\bbQ_p}$ denote by $^{\sigma}\!X := X \tensor_{\overline{\bbQ}_p,\sigma}{\overline{\bbQ}_p}$, that is $^{\sigma}\!X$ is given by the cartesian diagram $$\xymatrix{^{\sigma}\!X \ar[r]^{\sim} \ar[d] & X \ar[d] \\ \mbox{spec}\,\overline{\bbQ}_p \ar[r]_{\sigma}^{\sim} & \mbox{spec}\,\overline{\bbQ}_p.}$$ For a closed point $x \in X$ let $^{\sigma}\!x \in {^{\sigma}\!X}$ be the image of $x$  with respect to the isomorphism $X \iso {^{\sigma}\!X}$. Likewise, one defines $^{\sigma}\!X_{\bbC_p}$ and $^{\sigma}\!E$ for a vector bundle $E$ on $X_{\bbC_p}$, since the Galois group also acts on $\bbC_p$. A morphism $f: E_1 \to E_2$ induces a morphism $$^{\sigma}\!f:= f \tensor_{\bbC_p, \sigma}\bbC_p :{^{\sigma}\!E_1} \longto {^{\sigma}\!E_2}.$$ One obtains a functor  $$\sigma_*: \calB^{ps}_{X_{\bbC_p}} \longto \calB^{ps}_{{^{\sigma}\!X}_{\bbC_p}}$$ that maps a vector bundle $E$ onto the vector bundle $^{\sigma}\!E$ and a morphism $f$ onto the morphism $^{\sigma}\!f$. We call a functor $F$ continuous, if for all objects $A,B$ the associated map $\mbox{Hom}(A,B) \to \mbox{Hom}(F(A),F(B))$ is a continuous map of topological spaces. For the category $\mathbf{Rep}_{\pi_1(X,x)}(\bbC_p)$ there is a continuous functor $$\mathbf{C}_{\sigma}: \mathbf{Rep}_{\pi_1(X,x)}(\bbC_p) \longto \mathbf{Rep}_{\pi_1({^{\sigma}\!X}, {^{\sigma}\!x})}(\bbC_p).$$ First, there is an isomorphism $\sigma_*: \pi_1(X,x) \To{\sim} \pi_1(^{\sigma}\!X, {^{\sigma}\!x})$ (see \cite{dw2}, below Proposition 24). Furthermore, one gets a continuous and $\sigma$-linear functor $\sigma_*: \mathbf{Vect}(\bbC_p) \to \mathbf{Vect}(\bbC_p)$, which maps the $\bbC_p$-vectorspace $V$ onto $^{\sigma}\!V := V \tensor_{\bbC_p,\sigma}\bbC_p$ and a morphism $f: V \to W$ onto $^{\sigma}\!f := f \tensor_{\bbC_p,\sigma}\bbC_p: {^{\sigma}\!V} \longto {^{\sigma}\!W}$. We then map a representation $\rho$ in the category $\mathbf{Rep}_{\pi_1(X,x)}(\bbC_p)$ onto the representation $\mathbf{C}_{\sigma}(\rho)$ defined by the commutative diagram $$\xymatrix{\pi_1(X,x) \ar[r]^{\rho} \ar[d]^{\sigma_*}_{\wr} & \mbox{GL}(V) \ar[d]^{\sigma_*}_{\wr} \\ \pi_1({^{\sigma}\!X},{^{\sigma}\!x}) \ar[r]^{\mathbf{C}_{\sigma}(\phi)} & \mbox{GL}({^{\sigma}\!V}).}$$ A morphism $f:V \to W$ of $\pi_1(X,x)$-modules is mapped onto the morphism $\mathbf{C}_{\sigma}(f):= {^{\sigma}\!f}: {^{\sigma}\!V} \to {^{\sigma}\!W}$. One easily sees that $\mathbf{C}_{\sigma}(f)$ is a morphism of $\pi_1({^{\sigma}\!X},{^{\sigma}\!x})$-modules.

\begin{remark} \label{basis}
Choosing a basis of the vector space $V$, one gets the commutative diagram $$\xymatrix{ \mbox{GL}(V) \ar[r]^{\sigma_*} \ar[d]^{\wr} & \mbox{GL}({^{\sigma}\!V}) \ar[d]_{\wr} \\ \mbox{GL}_r(\bbC_p) \ar[r]^{\sigma} & \mbox{GL}_r(\bbC_p),}$$ where the lower vertical morphism maps a matrix $A$ onto $\sigma(A)$. For an automorphism $f \in \mbox{GL}(V)$ with associated matrix $A = (a_{ij}) \in \mbox{GL}_r(\bbC_p)$, we have for the $i$-th basis vector $e_i$ of $V$ $$\sigma_*(f)(e_i) = (f \tensor_{\bbC_p, \sigma}\bbC_p)(e_i) = \sum_ja_{ij}e_j =\sum_j \sigma(a_{ij}) \cdot e_j,$$since $^{\sigma}\!V$ is the $\bbC_p$-vector space $V$ with scalar multiplication $\lambda \cdot v = \sigma^{-1}(\lambda)v$.
\end{remark}

There is the following connection between the Galois actions on these categories, see \cite{dw2}, Theorem 28:

\begin{thm} \label{galois}

There is a commutative diagram $$\xymatrix{\mathfrak{B}^{ps}_{X_{\bbC_p}} \ar[r]^-{\rho_E} \ar[d]^{\sigma_*} & \mathbf{Rep}_{\pi_1(X,x)}(\bbC_p) \ar[d]_{\mathbf{C}_{\sigma}} \\ \mathfrak{B}^{ps}_{{ ^{\sigma}\!X}_{\bbC_p}} \ar[r]^-{\rho_{{^\sigma}\!E}} & \mathbf{Rep}_{\pi_1({^{\sigma}\!X}, {^{\sigma}\!x})}(\bbC_p).}$$
\end{thm}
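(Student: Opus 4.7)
The plan is to unpack the Deninger-Werner construction of $\rho_E$ and verify at each stage that base change along $\sigma$ commutes with the data entering into it. First I would observe that $\sigma_*$ does carry $\mathfrak{B}^{ps}_{X_{\bbC_p}}$ into $\mathfrak{B}^{ps}_{{^{\sigma}\!X}_{\bbC_p}}$: if $\alpha: Y \to X$ is such that $\alpha^*E$ has strongly semistable reduction via a model $(\mathcal{Y}, \mathcal{E})$ admitting, for each $n$, a trivializing cover $\pi_n: \mathcal{Z}_n \to \mathcal{Y}_n$ in $\mathcal{S}_{\mathcal{Y},D}$, then the $\sigma$-base-changes $({^{\sigma}\!\mathcal{Y}}, {^{\sigma}\!\mathcal{E}})$ and ${^{\sigma}\!\pi_n}$ exhibit $({^{\sigma}\!\alpha})^*({^{\sigma}\!E})$ as strongly semistable by pure functoriality of fibre products. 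This shows the left vertical arrow is well defined and reduces the problem to $E \in \mathfrak{B}^{s}_{X_{\bbC_p}}$.

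Next I would fix a model $(\mathfrak{X}, \mathcal{E})$ with strongly semistable $\mathcal{E}_k$ of degree $0$, a divisor $D$, and for each $n$ a trivializing cover $\pi_n: \mathcal{Y}_n \to \mathfrak{X}_n$ in $\mathcal{S}_{\mathfrak{X},D}$. For $\bbC_p$-points $x, x' \in (X \setminus D)(\bbC_p)$ and an \'etale path $\gamma \in \mbox{Iso}(F_x, F_{x'})$, the parallel transport $\rho_E(\gamma)$ at level $n$ is built by lifting $x$ to a point $y$ on the trivializing cover, setting $y' = \gamma(y)$, and composing the evaluation isomorphisms $(\mathcal{E}_n)_x \iso \Gamma(\mathcal{Y}_n, \pi_n^*\mathcal{E}_n) \iso (\mathcal{E}_n)_{y'}$ coming from the trivialization. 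Applying $\sigma$ throughout produces ${^{\sigma}\!\pi_n}: {^{\sigma}\!\mathcal{Y}_n} \to {^{\sigma}\!\mathfrak{X}_n}$, a trivialization of ${^{\sigma}\!\mathcal{E}_n}$. Since proper pushforward and fibre at a closed point both commute with base change, the evaluation isomorphisms on the $\sigma$-side are exactly the $\sigma$-twists of those for $\mathcal{E}_n$ under the canonical identification $({^{\sigma}\!\mathcal{E}_n})_{{^{\sigma}\!x}} = {^{\sigma}\!((\mathcal{E}_n)_x)}$. Moreover $\sigma_* : \pi_1(X,x) \iso \pi_1({^{\sigma}\!X}, {^{\sigma}\!x})$ is defined precisely so as to identify $F_x$ with $F_{{^{\sigma}\!x}}$ naturally in the cover and to carry $\gamma$ to the corresponding path. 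Combining these observations and invoking Remark \ref{basis}, one obtains $\rho_{{^{\sigma}\!E}}(\sigma_*\gamma) = \mathbf{C}_{\sigma}(\rho_E)(\sigma_*\gamma)$; passage to the inverse limit in $n$ and tensoring up to $\bbC_p$ is continuous and $\sigma$-equivariant, so the square commutes on objects.

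Finally, for a morphism $f : E_1 \to E_2$ in $\mathfrak{B}^{ps}_{X_{\bbC_p}}$ I would extend $f$, after passing to a common integral model, to a morphism of bundles over $\mathfrak{X}_{\frako}$ and apply $\sigma$ to obtain ${^{\sigma}\!f}$; the same diagram chase then gives $\mathbf{C}_{\sigma}(\rho(f)) = \rho({^{\sigma}\!f})$. The main obstacle is essentially bookkeeping rather than anything conceptually new: I must check that the choices entering the Deninger-Werner construction (model, trivializing cover, lift of base point, representative at finite level) can be made coherently $\sigma$-equivariant, and that the independence of $\rho_E$ from these choices, already established in \cite{dw2}, is itself natural in $\sigma$. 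No substantially new analytic input is required beyond what is contained there.
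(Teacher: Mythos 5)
The paper gives no proof of this statement at all: Theorem \ref{galois} is quoted directly from \cite{dw2}, Theorem 28. Your outline --- reducing to $\mathfrak{B}^{s}_{X_{\bbC_p}}$, checking that $\sigma$-base change preserves models, trivializing covers and the parallel-transport data $(\gamma y_{q_n})^* \circ (y^*_{q_n})^{-1}$, and then passing to the limit --- is essentially the argument carried out in that reference, so the strategy is sound and matches the source; the only caveat is that your text remains a plan, deferring the equivariance bookkeeping (well-definedness of $\rho_E$ independently of the choices, naturality of $\sigma_*$ on $\Pi_1$) rather than executing it.
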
  

Let $K \supseteq \bbQ_p$ be a field extension and $X$ a smooth, geometrically connected and projective curve over $K$. Further, let $E$ be a vector bundle on $X$, such that the vector bundle $E_{\bbC_p}$ lies in the category $\mathfrak{B}^{ps}_{X_{\bbC_p}}$. The scheme $^{\sigma}\!X_{\overline{\bbQ}_p}$ is defined by a cartesian diagram, and its universal property yields a commutative diagram $$\xymatrix{& X_{\bbC_p} \ar[rd] & \\ X_{\bbC_p} \ar[ru] \ar[rd] \ar[r]^{\sim} & {^{\sigma}\!X}_{\bbC_p} \ar[u]^{\wr} \ar[d] & \mbox{spec}\,\bbC_p \\ & \mbox{spec}\,\bbC_p. \ar[ur]_{\sigma} & }$$ The lower diagonal morphism is the structural morphism $X_{\bbC_p} \to \mbox{spec}\,\bbC_p$, and the upper one is the isomorphism $$X_{\bbC_p} = X \tensor_K\bbC_p \Longto{\mbox{id}\tensor \mbox{spec}\,\sigma} X \tensor \mbox{spec}\,\bbC_p = X_{\bbC_p}.$$ Hence we obtain a $\mbox{spec}\,\bbC_p$-isomorphism $X_{\bbC_p} \iso {^{\sigma}\!X}_{\bbC_p}$. Likewise, we identify $E_{\bbC_p} \iso {^{\sigma}}\!E_{\bbC_p}$. If $x \in X(K)$ is a $K$-valued point, Theorem \ref{galois} implies the following:

\begin{cor} \label{galo}
If $X$, $E$ and the closed point $x \in X$ are already defined over $K \supseteq \bbQ_p$, one obtains the commutative diagram $$\xymatrix{\mathfrak{B}^{ps}_{X_{\bbC_p}} \ar[r]^-{\rho_E} \ar[d]^{\sigma_*} & \mathbf{Rep}_{\pi_1(X,x)}(\bbC_p) \ar[d]_{\mathbf{C}_{\sigma}} \\ \mathfrak{B}^{ps}_{X_{\bbC_p}} \ar[r]^-{\rho_E} & \mathbf{Rep}_{\pi_1(X, x)}(\bbC_p).}$$  
\end{cor}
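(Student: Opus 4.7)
The plan is to deduce Corollary \ref{galo} directly from Theorem \ref{galois} by using the descent hypothesis to identify, inside each of the four corners of the Theorem \ref{galois} square, the ``twisted'' objects with their unprimed counterparts. Concretely, writing $X = X_K \otimes_K \overline{\bbQ}_p$, the chain of cartesian diagrams stated just above the corollary provides a canonical $\mathrm{spec}\,\bbC_p$-isomorphism $\phi \colon X_{\bbC_p} \To{\sim} {^{\sigma}\!X}_{\bbC_p}$. Since $E$ and $x$ also descend to $K$, functoriality of base change yields compatible identifications $\phi^*({^{\sigma}\!E}) \cong E$ in $\mathfrak{B}^{ps}_{X_{\bbC_p}}$ and $\phi(x) = {^{\sigma}\!x}$.

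First I would use $\phi$ to produce an equivalence of categories $\phi^* \colon \mathfrak{B}^{ps}_{{^{\sigma}\!X}_{\bbC_p}} \To{\sim} \mathfrak{B}^{ps}_{X_{\bbC_p}}$ and, via the induced identification of the corresponding fibre functors, an isomorphism $\phi_* \colon \pi_1({^{\sigma}\!X},{^{\sigma}\!x}) \To{\sim} \pi_1(X,x)$ of étale fundamental groups. Then I would compose the left-hand vertical functor $\sigma_*$ and the right-hand vertical functor $\mathbf{C}_{\sigma}$ of Theorem \ref{galois} with these two equivalences. Because $\phi^*({^{\sigma}\!E}) = E$ and $\phi$ sends $x$ to ${^{\sigma}\!x}$, the bottom row of the resulting diagram becomes exactly $\rho_E \colon \mathfrak{B}^{ps}_{X_{\bbC_p}} \to \mathbf{Rep}_{\pi_1(X,x)}(\bbC_p)$, as required.

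The only point that is not purely formal is to verify that these identifications are mutually compatible under $\rho$: namely, that the equivalence $\phi^*$ on bundles corresponds, via $\rho$, to the group isomorphism $\phi_*$ on fundamental groups. This is precisely the naturality of $\rho$ with respect to the morphism $\phi$, and hence follows from Theorem \ref{hauptthm}(2) applied to $\phi$. I expect this naturality check to be the main (and essentially only) obstacle; once granted, the corollary is a short diagram chase in the already established functoriality of the Deninger--Werner construction, requiring no new geometric input beyond \cite{dw2}.
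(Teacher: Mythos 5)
Your proposal is correct and takes essentially the same route as the paper: the paper likewise obtains the canonical $\mathrm{spec}\,\bbC_p$-isomorphism $X_{\bbC_p}\iso{}^{\sigma}\!X_{\bbC_p}$ from the cartesian diagram, identifies $E\iso{}^{\sigma}\!E$ and $x\mapsto{}^{\sigma}\!x$ using that everything descends to $K$, and then reads the corollary off directly from Theorem \ref{galois}. The only difference is that you spell out the naturality check (that $\rho$ intertwines $\phi^*$ with $\phi_*$, via Theorem \ref{hauptthm}(2)), which the paper leaves implicit.
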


\section{Algebraic monodromy groups of vector bundles on p-adic curves} 

The category of semistable vector bundles of degree $0$ on the smooth, connected and projective curve $X_{\bbC_p}$ is abelian and possesses a natural fibre functor, namely the faithful functor $\omega_x: \mathfrak{B}^{ps}_{X_{\bbC_p}} \to \mathbf{Vect}(\bbC_p)$. In particular, it is a neutral Tannaka category, so it is equivalent to the category of finite-dimensional representations of an affine group scheme over $\bbC_p$. The latter is called the Tannaka dual group of this category. A good reference for this theory is \cite{dm}. We denote by $\mathfrak{B}_E$ the Tannaka subcategory generated by a vector bundle $E$ and by $G_E$ its Tannaka dual group. \medskip

Now let $E$ be a vector bundle with potentially strongly semistable reduction of degree $0$ on the smooth projective curve $X_{\bbC_p}$ and $\rho_E: \pi_1(X,x) \to \mbox{GL}(E_x)$ the associated continuous representation of the \'etale fundamental group. In this section we will investigate the Zariski closure of the image of this representation and the Tannaka dual group of this vector bundle. We show that both groups coincide under certain conditions and prove a connectedness criterion. Recall the following definitions for algebraic groups: \medskip

Let $G$ be a group scheme of finite type over a field $K$. The group scheme $G$ is called a linear algebraic group if there is a finite-dimensional $K$-vector space $V$ together with a closed immersion of group schemes $G \into \mathbf{G}\mathbf{L}_V$.
We denote by $\mathbf{Rep}_{G}(K)$ the category of finite-dimensional $G$-modules over $K$. A linear algebraic group $G$ is called linear reductive if all $G$-modules in $\mathbf{Rep}_{G}(K)$ are semisimple. If furthermore $G$ is smooth and connected, such that the unipotent radical $R_u(G)$ is trivial, then $G$ is called reductive. If $G$ is smooth and connected, with trivial radical $R(G)$, then $G$ is semisimple. A semisimple algebraic group $G$ with no nontrivial closed connected normal subgroups is called almost simple. \medskip

Note that a linear algebraic group over a field of characteristic $0$ is always smooth. There is the following well-known connection between linear reductive and reductive groups: 

\begin{prop} \label{reduktiv} 
If $K$ is a field of characteristic $0$ and $G$ a linear algebraic group over $K$, then the connected component $G^0$ is reductive if and only if the algebraic group $G$ is linear reductive. More generally, for $G$ an arbitrary affine group scheme the connected component $G^0$ is pro-reductive if and only if $G$ is linear reductive.
\end{prop}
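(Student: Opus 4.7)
The plan is to prove the two directions separately in the linear algebraic setting and then reduce the pro-algebraic statement to that case.

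For the implication ``$G$ linear reductive $\Rightarrow$ $G^0$ reductive'', my strategy would be to show that the unipotent radical $R_u(G^0)$ is trivial. Since $R_u(G^0)$ is a characteristic subgroup of $G^0$, it is normal in $G$. I would then take a faithful $G$-module $V$ and decompose $V = \bigoplus_i S_i$ into simple $G$-submodules (available by linear reductivity). The Lie-Kolchin theorem gives $S_i^{R_u(G^0)} \neq 0$; by normality this subspace is $G$-stable, hence equal to the simple summand $S_i$. Thus $R_u(G^0)$ acts trivially on the faithful module $V$ and must itself be trivial. Together with the automatic smoothness of $G^0$ in characteristic zero, this gives reductivity of $G^0$.

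For the converse direction I would first invoke Weyl's theorem, which states that a connected reductive group in characteristic zero is linear reductive, so that every $G^0$-module is semisimple. The passage from $G^0$ to $G$ I would handle by an averaging argument. Since $G^0 \lhd G$ has finite index $n$, and $n$ is invertible in $K$, given a $G$-submodule $W \subseteq V$ and a $G^0$-equivariant projection $\pi : V \to W$ (available from semisimplicity over $G^0$), the formula
\[
\tilde\pi \defeq \frac{1}{n}\sum_{\bar g \in G/G^0} g\, \pi\, g^{-1}
\]
defines a $G$-equivariant projection onto $W$, whose kernel is the desired $G$-stable complement. Well-definedness on cosets of $G/G^0$ follows from the $G^0$-equivariance of $\pi$ combined with the $G$-stability of $W$.

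For a general affine group scheme $G$ I would write $G = \projlim_i G_i$ as a cofiltered inverse limit of linear algebraic quotients with $G^0 = \projlim_i G_i^0$. Since every finite-dimensional $G$-module factors through some $G_i$, linear reductivity of $G$ is equivalent to linear reductivity of each $G_i$, and by the linear algebraic case this in turn is equivalent to reductivity of every $G_i^0$, that is, to pro-reductivity of $G^0$. The main obstacle is the averaging step in the converse direction: it relies essentially on $[G:G^0]$ being finite and invertible in $K$, which is exactly where the characteristic-zero hypothesis enters; identifying $G^0$ with $\projlim_i G_i^0$ in the pro-algebraic setting is a secondary but routine point that needs to be verified.
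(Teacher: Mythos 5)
Your argument is essentially correct, but note that the paper does not actually prove this proposition: it simply cites \cite{dm}, Remark 2.28, so your write-up supplies the standard argument underlying that reference. Both directions are the usual ones: Kolchin's fixed-point theorem applied to the characteristic (hence $G$-normal) subgroup $R_u(G^0)$ acting on the simple summands of a faithful module, and Weyl's theorem combined with averaging over the finite component group. The one point you should tighten, since the statement is for an arbitrary field $K$ of characteristic $0$, is the averaging step: the sum $\frac{1}{n}\sum_{\bar g \in G/G^0} g\,\pi\,g^{-1}$ only makes literal sense when the component group is a constant group scheme whose classes are represented by $K$-points of $G$; over a non-algebraically-closed $K$ the finite \'etale group scheme $\pi_0(G)$ need not have enough $K$-points (think of $\mu_3$ over $\bbQ$). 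The standard fix is to base-change to $\overline{K}$, where the argument goes through verbatim, and then use that a finite-dimensional module over a $K$-algebra is semisimple as soon as it becomes semisimple after a field extension. With that emendation, and the routine verification that $G^0 = \projlim_i G_i^0$ when $G$ is written as a cofiltered limit of its linear algebraic quotients, your proof is complete and is, as far as one can tell, exactly the argument the cited remark in \cite{dm} has in mind.
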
  
\begin{proof}
\cite{dm}, Remark 2.28. \end{proof}

\begin{prop}
Let $E$ be a semistable vector bundle of degree $0$ on the smooth, connected and projective curve $X_{\bbC_p}$. Its Tannaka dual group $G_E$ is a linear algebraic group. Denote by $E_x$ the fibre of $E$ in the point $x \in X(\bbC_p)$. Then there is a natural closed embedding $G_E \into \mathbf{G}\mathbf{L}_{E_x}$. 
Further, if $E$ is a polystable vector bundle, it follows that $G_E$ is linear reductive and hence that the connected component $G_E^0$ is a reductive algebraic group. 
\end{prop}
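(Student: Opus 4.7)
My plan is to separate the two claims. For the closed embedding $G_E \into \mathbf{GL}_{E_x}$, I would invoke general Tannaka formalism: by construction $\mathfrak{B}_E \subset \mathfrak{B}^{ps}_{X_{\bbC_p}}$ is the Tannaka subcategory tensor-generated by the single object $E$, so every object of $\mathfrak{B}_E$ is (isomorphic to) a subquotient of a finite direct sum of mixed tensor powers $E^{\tensor a} \tensor (E\units)^{\tensor b}$. The restriction of the fibre functor $\omega_x$ to $\mathfrak{B}_E$ realizes $G_E = \aut^{\tensor}(\omega_x|_{\mathfrak{B}_E})$, and the tautological action of $G_E$ on $E_x$ together with the fact that the representations arising from the fibres of the generators exhaust $\mathbf{Rep}_{G_E}(\bbC_p)$ under standard constructions yields a closed immersion $G_E \into \mathbf{GL}_{E_x}$ (cf.\ \cite{dm}). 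In particular $G_E$ is a linear algebraic group.

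For the reductivity statement, I would use Proposition \ref{reduktiv}: since $\bbC_p$ has characteristic $0$, it suffices to show that $G_E$ is linear reductive, i.e.\ that the category $\mathbf{Rep}_{G_E}(\bbC_p) \simeq \mathfrak{B}_E$ is semisimple. The semisimple objects of the abelian category of semistable degree-$0$ bundles on $X_{\bbC_p}$ are exactly the polystable ones, so the task reduces to proving that $\mathfrak{B}_E$ consists entirely of polystable bundles of degree $0$ whenever $E$ is.

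To that end, I would check that the class of polystable degree-$0$ bundles on $X_{\bbC_p}$ is stable under direct sums, duals, tensor products, and subquotients inside $\mathfrak{B}^{ps}_{X_{\bbC_p}}$. Direct sums and duals are immediate from the definition. A subobject of a polystable bundle in the ambient abelian category of semistable degree-$0$ bundles is a direct summand, hence polystable, since polystable bundles are precisely the semisimple objects. The main obstacle, and the only non-formal step, is closure under tensor products: one must know that the tensor product of two polystable bundles of degree $0$ on the smooth projective $\bbC_p$-curve $X_{\bbC_p}$ remains polystable. I would invoke the characteristic-zero version of this classical fact, well-known over $\bbC$ via the Narasimhan-Seshadri theorem and the unitarity of the associated representations, and valid more generally over algebraically closed fields of characteristic $0$ (on curves it is a consequence of the stability behavior of slope-semistability under tensor in characteristic $0$, combined with the preservation of semisimplicity). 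Granting this, $\mathfrak{B}_E$ is a semisimple tensor subcategory, hence $G_E$ is linear reductive and $G_E^0$ is reductive by Proposition \ref{reduktiv}.
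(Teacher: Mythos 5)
Your proof is correct, and the first half (linear algebraicity and the closed embedding $G_E \into \mathbf{GL}_{E_x}$ via the tensor generator $E_x$ and \cite{dm}) is essentially identical to the paper's argument. Where you diverge is in the reductivity claim: the paper simply cites \cite{dw3}, Theorem 12, for the semisimplicity of $\mathfrak{B}_E$ when $E$ is polystable, whereas you reprove that semisimplicity from scratch by showing that polystable degree-$0$ bundles on the curve are closed under duals, direct sums, subquotients (immediate, since they are exactly the semisimple objects of the abelian category of semistable degree-$0$ bundles) and --- the one non-formal input --- tensor products, which you correctly isolate as the classical characteristic-zero theorem that a tensor product of polystable degree-$0$ bundles is again polystable (Narasimhan--Seshadri over $\bbC$, transported to $\bbC_p$ by the Lefschetz principle, or the algebraic proofs of Ramanan--Ramanathan). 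Your route is more self-contained and makes no use of the $p$-adic machinery of Deninger--Werner, so it applies verbatim to any polystable degree-$0$ bundle on a smooth projective curve over an algebraically closed field of characteristic $0$; the cost is that you must import the tensor-product theorem as a black box, which is a nontrivial result in its own right. The paper's citation is shorter but ties the statement to the specific results of \cite{dw3}. Both arguments are sound, and both reduce the final step to Proposition \ref{reduktiv} in the same way.
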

\begin{proof}
Due to \cite{dm}, Proposition 2.20b, the affine group scheme $G_E$ is linear algebraic if and only if the tensor category $\mathbf{Rep}_{G_E}(\bbC_p)$ is generated by some object of this category. Now $\mathbf{Rep}_{G_E}(\bbC_p)$ is of course generated by the $G_E$-module $E_x$, and it is easy to see that this generator corresponds to a faithful representation $G_E \into \mathbf{G}\mathbf{L}_{E_x}$. If the vector bundle $E$ is even polystable, then $\mathfrak{B}_E$ is semisimple, see for example \cite{dw3}, Theorem 12, so the last assertion follows from Proposition \ref{reduktiv}. 
\end{proof}

Let $E$ be a semistable vector bundle with potentially strongly semistable reduction and let $\rho_E: \pi_1(X,x) \to \mbox{GL}(E_x)$ be the associated representation of the fundamental group. We denote by $G_{\rho_E}$ the image of this representation and by $\overline{G}_{\rho_E}$ its Zariski closure in $\mathbf{GL}_{E_x}$. 

\begin{lemma} \label{algmono}
The natural functor $\mathbf{Rep}_{\overline{G}_{\rho_E}}(\bbC_p) \to \mathbf{Rep}_{G_{\rho_E}}(\bbC_p)$ is fully faithful.
\end{lemma}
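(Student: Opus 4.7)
The plan is to reduce the statement to a standard Zariski density argument. Faithfulness is immediate: a morphism in either category is, on the nose, a $\bbC_p$-linear map between the underlying vector spaces, and the restriction functor does not change this underlying data. So if two algebraic $\overline{G}_{\rho_E}$-equivariant maps coincide as $G_{\rho_E}$-equivariant linear maps, they are already equal as morphisms in $\mathbf{Rep}_{\overline{G}_{\rho_E}}(\bbC_p)$.

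For fullness, let $V, W \in \mathbf{Rep}_{\overline{G}_{\rho_E}}(\bbC_p)$ and suppose $\phi\colon V \to W$ is a $\bbC_p$-linear map commuting with the action of the abstract group $G_{\rho_E}$. I would introduce the usual algebraic action of $\overline{G}_{\rho_E}$ on the affine space $\hom_{\bbC_p}(V,W)$ by $(g\cdot \psi)(v) = g\,\psi(g^{-1}v)$; since $V, W$ carry algebraic representations of $\overline{G}_{\rho_E}$, this is an algebraic action, so the morphism of schemes
$$\mu\colon \overline{G}_{\rho_E} \longto \hom_{\bbC_p}(V,W),\quad g \longmapsto g\cdot\phi - \phi,$$
is regular. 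Consequently its zero locus $Z := \mu\inv(0)$ is a Zariski-closed subscheme of $\overline{G}_{\rho_E}$, and one checks directly that $Z$ is a closed subgroup, namely the stabilizer of $\phi$ under the action above.

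By hypothesis, $\phi$ is $G_{\rho_E}$-equivariant, so $G_{\rho_E} \sub Z(\bbC_p)$. But $\overline{G}_{\rho_E}$ is by definition the Zariski closure of $G_{\rho_E}$ in $\mathbf{G}\mathbf{L}_{E_x}$, so $Z$ being closed and containing $G_{\rho_E}$ forces $Z = \overline{G}_{\rho_E}$. This exactly says that $\phi$ is $\overline{G}_{\rho_E}$-equivariant, proving fullness.

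There is no real obstacle here beyond verifying that the stabilizer is a closed subscheme, and this is automatic because the actions on $V$ and $W$ are algebraic. The argument is a textbook application of the principle that a regular morphism from a reduced scheme vanishing on a Zariski-dense subset vanishes identically.
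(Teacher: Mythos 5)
Your argument is correct and is essentially the paper's own proof: the paper reduces fullness to the equality of invariants $V^{\overline{G}_{\rho_E}} = V^{G_{\rho_E}}$ via $\hom_G(V,W) = (V^*\tensor W)^G$ and then observes that the stabilizer of an invariant vector is Zariski closed, which is exactly your stabilizer-of-$\phi$ argument transported to $V^*\tensor W$. Both proofs rest on the same two facts, namely that stabilizers under algebraic actions are closed and that $G_{\rho_E}$ is Zariski dense in $\overline{G}_{\rho_E}$.
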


\begin{proof}
Since for any group $G$ and finite-dimensional $G$-modules $V$ and $W$ we have $\mbox{Hom}_G(V,W) = (V^*\tensor W)^G$, it suffices to show that $V^{\overline{G}_{\rho_E}} = V^{G_{\rho_E}}$ holds for all $\overline{G}_{\rho_E}$-modules $V$. Let $v \in V$ be a nontrivial $G_{\rho_E}$-invariant element. Denoting the image of the representation $G_{\rho_E} \to \mbox{GL}(V)$ by $G_V$, we have $$\mbox{im}(\overline{G}_{\rho_E} \longto \mathbf{GL}_V) = \overline{G}_V$$ and we furthermore may assume that $$G_V \subseteq \left( \begin{array}{cccc} 1 & & & \\ 0 & & * & \\ \vdots & & * & \\ 0 & & & \end{array} \right) \subseteq  \mbox{GL}(V).$$ The group consisting of these matrices is Zariski closed in $\mbox{GL}(V)$, hence $v$ is also a $\overline{G}_{\rho_E}$-invariant element. The other inclusion is clear.
\end{proof}  

If we consider $\mathbf{Rep}_{G_{\rho_E}}(\bbC_p)$ as a full subcategory of $\mathbf{Rep}_{\pi_1(X,x)}(\bbC_p)$, we have fully faithful functors $$\mathbf{Rep}_{G_E}(\bbC_p) \iso \mathfrak{B}_E \longto \mathbf{Rep}_{G_{\rho_E}}(\bbC_p) \longleftarrow \mathbf{Rep}_{\overline{G}_{\rho_E}}(\bbC_p),$$ compatible with tensor products and duals. They furthermore commute with the natural fibre functor. Since the category on the left is contained in $\mathbf{Rep}_{\overline{G}_{\rho_E}}(\bbC_p)$ if considered as full subcategories of $\mathbf{Rep}_{G_{\rho_E}}(\bbC_p)$, we obtain a fully faithful functor $\mathbf{Rep}_{G_E}(\bbC_p) \to \mathbf{Rep}_{\overline{G}_{\rho_E}}(\bbC_p)$. It follows from \cite{dm}, Proposition 2.21b that this corresponds to a closed immersion $$\overline{G}_{\rho_E} \longinto G_E$$because $\rho_E: G_{\rho_E} \to \mbox{GL}(E_x)$ is also a tensor generator of $\mathbf{Rep}_{\overline{G}_{\rho_E}}(\bbC_p)$. It is quite natural to conjecture that for polystable vector bundles this is always an isomorphism. We will show below that this is true if the representation $\rho_E$ is semisimple, see also the remark in \cite{dw3} on the last page. \medskip

Let $G$ be a linear reductive group with a faithful $G$-module $V$ and denote by $T^{r,s}(V)$ for all $r,s \in \bbN$ the $G$-module $V^{\tensor r} \tensor (V^*)^{\tensor s}$. For an algebraic subgroup $H \subseteq G$ let $H'$ be the biggest subgroup of $G$ with $T^{r,s}(V)^H = T^{r,s}(V)^{H'}$ for all $r,s \in \bbN$. We surely have $H \subseteq H'$, and if $H$ itself is linear reductive this is even an equality: 

\begin{prop}  \label{schnitte}
Let $G \into \mathbf{G}\mathbf{L}_V$ be a linear reductive group over a field $K$ of characteristic $0$ and let $H \subseteq G$ be a linear reductive subgroup. Then $H = H'$.
\end{prop}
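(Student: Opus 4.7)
The plan is to characterize $H$ internally through its invariants on the regular representation of $G$, and then argue that these invariants are already detected at the level of the tensor spaces $T^{r,s}(V)$. Concretely, I would let $H$ (and also $H'$) act on $G$ by right translation, inducing an action on $K[G]$. Then $K[G]^H$ consists exactly of those regular functions which are constant on right cosets $xH$, and since such functions separate distinct cosets one has
\[
H = \{g \in G : f(g) = f(1) \text{ for all } f \in K[G]^H\}.
\]
Since $H \subseteq H' \subseteq G$, the same description applies to $H'$, and so it would suffice to show $K[G]^H = K[G]^{H'}$ as subsets of $K[G]$.

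To show this, I would use that $G$ is linearly reductive, so that the right regular representation decomposes as a direct sum of finite-dimensional irreducible $G$-modules $K[G] = \bigoplus_\alpha W_\alpha$. Then $K[G]^H = \bigoplus_\alpha W_\alpha^H$ and similarly for $H'$, so it would be enough to prove $W^H = W^{H'}$ for every irreducible $G$-module $W$. Here the faithfulness of $V$ enters: by the Tannakian reconstruction principle (or, if one prefers, by comparing the tensor subcategory generated by $V$ with $\mathbf{Rep}_G(K)$ via Proposition 2.21 of \cite{dm}), every irreducible $G$-module is a subquotient, hence by semisimplicity a $G$-direct summand, of some $T = T^{r,s}(V)^{\oplus n}$.

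Now fixing such an embedding $W \hookrightarrow T$ as a $G$- (hence $H$- and $H'$-) submodule, I would simply write $W^H = W \cap T^H$ and $W^{H'} = W \cap T^{H'}$, which holds because intersecting with an invariant subspace commutes with taking fixed vectors. The hypothesis $T^{r,s}(V)^H = T^{r,s}(V)^{H'}$ gives $T^H = T^{H'}$ after taking the direct sum of $n$ copies, and so $W^H = W^{H'}$. Summing over $\alpha$ one obtains $K[G]^H = K[G]^{H'}$, and the characterization in Step~1 forces $H = H'$.

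The main obstacle in this plan is really a single input: one must know that for a linearly reductive group $G$ in characteristic zero, a faithful representation $V$ suffices to generate all irreducibles of $G$ as direct summands inside tensor constructions of $V$ and $V^*$. Everything else reduces to elementary manipulations with invariants in direct sums and the standard fact that functions on $G$ separate cosets of a closed subgroup. It is worth observing that, as the argument shows, the linear reductivity of $H$ itself is not actually used; what is essential is the linear reductivity of $G$ together with the faithfulness of $V$.
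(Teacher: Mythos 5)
Your argument is correct in substance and gives a complete proof. Since the paper itself only cites \cite{dm}, Proposition 3.1, the relevant comparison is with the proof there: the standard route goes through Chevalley's theorem, realizing $H$ as the stabilizer of a line $L$ in a representation $W$ obtained from $V$ by tensor operations, and then using the reductivity of $H$ to replace the line by an honest $H$-fixed tensor (e.g.\ by passing to $L\otimes L^{\vee}\subseteq W\otimes W^{*}$), so that $H$ is the full stabilizer of a fixed vector in some finite sum of spaces $T^{r,s}(V)$. Your proof instead works with the right regular representation: it characterizes $H$ by the condition $f(g)=f(1)$ for all $f\in K[G]^{H}$, decomposes $K[G]$ into irreducibles using the linear reductivity of $G$, and reduces to the hypothesis $T^{r,s}(V)^{H}=T^{r,s}(V)^{H'}$ via the fact that every irreducible $G$-module is a direct summand of a finite direct sum of spaces $T^{r_i,s_i}(V)$ (not necessarily all with the same $(r,s)$ --- a harmless imprecision in your write-up). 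Both proofs rest on the same key input, namely that a faithful $V$ tensor-generates $\mathbf{Rep}_{G}(K)$; yours trades Chevalley's theorem for the semisimplicity of $K[G]$ and the separation of cosets by invariants. Note also that for the inclusion $H'\subseteq H$ you only need that each $h'\in H'$ fixes $K[G]^{H'}=K[G]^{H}$ pointwise, so the separation property is never required for $H'$ itself.

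One substantive correction: your closing remark that the linear reductivity of $H$ is not used is false, and the proposition itself fails without that hypothesis. Take $G=\mathbf{SL}_{2}$ with its standard module $V$ and $H=B$ a Borel subgroup: a $B$-fixed vector in $T^{r,s}(V)$ is a highest weight vector of weight $0$ and therefore generates a trivial $G$-submodule, so $T^{r,s}(V)^{B}=T^{r,s}(V)^{G}$ for all $r,s$ and $B'=G\neq B$. In your own argument the hypothesis on $H$ is consumed exactly at the step ``such functions separate distinct cosets'': the cosets $gH$ are disjoint closed $H$-stable subsets of the affine variety $G$, and one needs $H$ linearly reductive (via the Reynolds operator, or equivalently the observability of reductive subgroups and the affineness of $G/H$) to produce $f\in K[G]^{H}$ taking different values on two of them. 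For $H=B$ one has $K[G]^{B}=K$ and no separation is possible. The step is fine under the stated hypotheses, but it is precisely where they enter, so the final remark should be deleted.
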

\begin{proof} \cite{dm}, Proposition 3.1. \end{proof}

Let $E$ be a polystable vector bundle of degree $0$ and let $F$ be a vector bundle lying in the Tannaka category $\mathfrak{B}_E$. The fully faithfulness of the functor $\mathfrak{B}_E \to \mathbf{Rep}_{G_E}(\bbC_p)$ yields the equations 
$$\Gamma(X_{\bbC_p}, F) = \mbox{Hom}(\scrO_{X_{\bbC_p}}, F) = \mbox{Hom}_{G_E}(\bbC_p, F_x) = F_x^{G_E}.$$
Here $\bbC_p$ denotes the trivial $G_E$-module. The global sections of the bundle $F$ correspond to the $G_E$-invariants of its fibre $F_x$.
If we apply the proposition above with $G := \mathbf{GL}_{E_x}$, $H := G_E$, we see that $G_E$ depends only on the global sections $\Gamma(X_{\bbC_p}, T^{r,s}(E))$ with $r,n \in \bbN$. If we denote by $\gamma_x$ the image of the global section $\gamma \in \Gamma(X_{\bbC_p}, T^{r,s}(E))$ in the fibre $T^{r,s}(E_x)$, the following corollary holds:

\begin{cor} \label{corschnitt}
For all polystable vector bundles $E$ of degree zero we have  $$G_E(\bbC_p) = \left.\{g \in \mbox{GL}(E_x)\;|\; {g\gamma_x = \gamma_x \mbox{ for all } \gamma \in \Gamma(X_{\bbC_p}, T^{r,s}(E)) \atop \mbox{ and all } r,s \in \bbN}\right\}.$$ 
\end{cor}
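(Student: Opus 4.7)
The plan is to combine Proposition \ref{schnitte} with the dictionary between global sections of bundles in $\mathfrak{B}_E$ and $G_E$-invariants of their fibres, which was spelled out in the paragraph preceding the corollary.

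First I would apply Proposition \ref{schnitte} with the ambient group $G = \mathbf{GL}_{E_x}$ and the linear reductive subgroup $H = G_E$; the subgroup is indeed linear reductive since $E$ is polystable, so the preceding proposition applies. The characteristic of $\bbC_p$ is $0$, so the hypotheses of Proposition \ref{schnitte} are met. We conclude that $G_E = G_E'$, i.e. an element $g \in \mathbf{GL}(E_x)$ lies in $G_E(\bbC_p)$ if and only if $g$ fixes every element of $T^{r,s}(E_x)^{G_E}$ for all $r,s \in \bbN$.

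Next I would identify $T^{r,s}(E_x)^{G_E}$ with the image of $\Gamma(X_{\bbC_p}, T^{r,s}(E))$ in the fibre under the map $\gamma \mapsto \gamma_x$. The Tannaka subcategory $\mathfrak{B}_E$ is closed under tensor products and duals, hence contains each $T^{r,s}(E)$. Since the fibre functor $\omega_x : \mathfrak{B}_E \to \mathbf{Vect}(\bbC_p)$ induces the equivalence $\mathfrak{B}_E \iso \mathbf{Rep}_{G_E}(\bbC_p)$, and is in particular fully faithful, the computation already carried out before the corollary,
$$\Gamma(X_{\bbC_p}, T^{r,s}(E)) = \hom(\scrO_{X_{\bbC_p}}, T^{r,s}(E)) = \hom_{G_E}(\bbC_p, T^{r,s}(E_x)) = T^{r,s}(E_x)^{G_E},$$
shows that this map is a bijection from the space of global sections onto the $G_E$-invariants in the fibre.

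Combining the two steps yields exactly the desired description: $g \in G_E(\bbC_p)$ if and only if $g\gamma_x = \gamma_x$ for every $\gamma \in \Gamma(X_{\bbC_p}, T^{r,s}(E))$ and all $r,s \in \bbN$. There is no genuine obstacle here; the only point that deserves to be made carefully is that one may freely identify the two descriptions of $T^{r,s}(E_x)^{G_E}$ because $\omega_x$ is compatible with tensor products and duals, so the invariants on the Tannakian side really correspond to global sections of the very same bundle $T^{r,s}(E)$ on the geometric side.
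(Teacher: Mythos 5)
Your proof is correct and follows essentially the same route as the paper: apply Proposition \ref{schnitte} with $G = \mathbf{GL}_{E_x}$ and $H = G_E$ (linear reductive by polystability), and identify $T^{r,s}(E_x)^{G_E}$ with the fibres of global sections via the full faithfulness of $\mathfrak{B}_E \to \mathbf{Rep}_{G_E}(\bbC_p)$. The paper's own proof is a one-line compression of exactly these two steps, which you have merely spelled out in more detail.
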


\begin{proof}
The linear reductive group $G_E$ is, due to Proposition \ref{schnitte}, the biggest algebraic subgroup of $\mathbf{G}\mathbf{L}_{E_x}$ fixing all global sections $\Gamma(X_{\bbC_p}, T^{r,s}(E)) \subseteq T^{r,s}(E_x)$ for all $r,n \in \bbN$.
\end{proof}

\begin{prop} \label{gleich}
Let $E$ be a polystable vector bundle with potentially strongly semistable reduction of degree $0$ and denote by $\rho_E: \pi_1(X,x) \to \mbox{GL}(E_x)$ the corresponding representation. If $\rho_E$ is semisimple, we have $$\overline{G}_{\rho_E} = G_E.$$
\end{prop}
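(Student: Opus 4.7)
The plan is to deduce the equality from Proposition \ref{schnitte} applied to the inclusion $\overline{G}_{\rho_E} \hookrightarrow G_E$ obtained in the discussion preceding the proposition. Since $G = G_E$ is already known to be linear reductive (because $E$ is polystable), everything reduces to two things: (a) verifying that $\overline{G}_{\rho_E}$ is linear reductive, and (b) showing that the tensor invariants of $G_E$ and $\overline{G}_{\rho_E}$ on every $T^{r,s}(E_x)$ coincide, so that the group $(\overline{G}_{\rho_E})' \subseteq G_E$ from Proposition \ref{schnitte} is all of $G_E$.

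For (a), I would argue as follows. Because $\rho_E$ is semisimple, $E_x$ is a semisimple $G_{\rho_E}$-module, and since invariant subspaces are Zariski-closed, it is also semisimple as an $\overline{G}_{\rho_E}$-module. The category $\mathbf{Rep}_{\overline{G}_{\rho_E}}(\bbC_p)$ is tensor-generated by $E_x$ (together with its dual), and in characteristic $0$ tensor products and subquotients of semisimple representations of an algebraic group are again semisimple; hence every finite-dimensional $\overline{G}_{\rho_E}$-module is semisimple, i.e.\ $\overline{G}_{\rho_E}$ is linear reductive.

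For (b), the key identification is the chain of equalities
\[
T^{r,s}(E_x)^{G_E} \;=\; \Gamma(X_{\bbC_p}, T^{r,s}(E)) \;=\; T^{r,s}(E_x)^{\pi_1(X,x)} \;=\; T^{r,s}(E_x)^{G_{\rho_E}} \;=\; T^{r,s}(E_x)^{\overline{G}_{\rho_E}}.
\]
The first equality is Corollary \ref{corschnitt}. The second uses that the functor $\rho: \mathfrak{B}^{ps}_{X_{\bbC_p}} \to \mathbf{Rep}_{\pi_1(X,x)}(\bbC_p)$ is fully faithful (the Faltings fact recalled after Theorem \ref{hauptthm}), applied to $\hom(\scrO_{X_{\bbC_p}}, T^{r,s}(E))$. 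The third is tautological, as $G_{\rho_E}$ is the image of $\rho_E$. The last is Lemma \ref{algmono} applied to $T^{r,s}(E_x)$.

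Now Proposition \ref{schnitte}, applied inside $\mathbf{GL}_{E_x}$ to the linear reductive subgroup $H := \overline{G}_{\rho_E}$ of the linear reductive group $G := G_E$ with the faithful $G$-module $V := E_x$, yields $H = H'$, where $H'$ is the largest subgroup of $G$ having the same tensor invariants as $H$. By (b), $G_E$ itself has those invariants, so $H' = G_E$, and therefore $\overline{G}_{\rho_E} = G_E$. The only step that is not purely formal is the verification that $\overline{G}_{\rho_E}$ is linear reductive; this is where the semisimplicity hypothesis on $\rho_E$ is actually used, and it is the main (mild) obstacle in the argument.
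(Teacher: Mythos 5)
Your proposal is correct and follows essentially the same route as the paper: establish linear reductivity of $\overline{G}_{\rho_E}$ from the semisimplicity of $\rho_E$, identify the tensor invariants $T^{r,s}(E_x)^{\overline{G}_{\rho_E}} = T^{r,s}(E_x)^{\pi_1(X,x)} = T^{r,s}(E_x)^{G_E}$, and conclude with Proposition \ref{schnitte}. You merely spell out the intermediate identifications (Corollary \ref{corschnitt}, full faithfulness, Lemma \ref{algmono}) in more detail than the paper does.
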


\begin{proof}
If $E_x$ is a semisimple $G_{\rho_E}$-module, then it is also semisimple as $\overline{G}_{\rho_E}$-module. Because of char($\bbC_p) = 0$, all objects $T^{r,s}(E_x)$ are semisimple $\overline{G}_{\rho_E}$-modules, so  $\overline{G}_{\rho_E}$ is linear reductive, since $\mathbf{Rep}_{\overline{G}_{\rho_E}}(\bbC_p)$ is generated by the module $E_x$. Because of $$T^{r,s}(E_x)^{\overline{G}_{\rho_E}} = T^{r,s}(E_x)^{\pi_1(X,x)} = T^{r,s}(E_x)^{G_E}$$ for all $r,s \in \bbN$ the result follows from Proposition \ref{schnitte}. 
\end{proof}

We now want to study the connected component of the identity of the algebraic monodromy groups $G_E$ and $\overline{G}_{\rho_E}$. The functor $$F: FEt/X \longto \mathbf{Sets},\; F(Y):= \mbox{Hom}_X(x,Y)$$ from the category of finite \'etale coverings of $X$ to the category of sets gives an equivalence between $FEt/X$ and the category of finite $\pi_1(X,x)$-sets with a continuous action, see \cite{sga1}, Expos\'e V.7. Recall that the image of the fundamental group lies Zariski dense in $G_E$. For the finite group $H:=\overline{G}_{\rho_E}/\overline{G}_{\rho_E}^0(\bbC_p)$ with a continuous surjective morphism $\pi(X,x) \onto H$ we hence find a finite \'etale covering $f_0: Y \to X$ and a short exact sequence $$1 \longto \pi_1(Y,y) \longto \pi_1(X,x) \longto H \longto 1.$$ For the vector bundle $f_0^*E$ on $Y_{\bbC_p}$ the equation $G_{\rho_{f_0^*E}}= G_{\rho_E}^0$ holds due to the compatibility of the pullback with the representation, see Theorem \ref{hauptthm}. Let us denote this pullback bundle by $E_0$. This shows:

\begin{prop} \label{uber}
Let $E$ be a semistable vector bundle with potentially strongly semistable reduction of degree $0$. For the vector bundle $E_0$ with respect to the finite \'etale covering  $f_0 : Y \to X$ we have $$\overline{G}_{\rho_{E_0}} = \overline{G}_{\rho_E}^0,\; G_{E_0}= G_E^0 \mbox{ and } \mbox{Aut}_X(Y) = \overline{G}_{\rho_{E_0}}/\overline{G}_{\rho_E}^0(\bbC_p) = G_E/G_E^0(\bbC_p).$$
\end{prop}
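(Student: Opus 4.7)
Plan: The plan is to deduce all three equalities from two ingredients — the compatibility of the pullback $f_0^*$ with the functor $\rho$ (Theorem~\ref{hauptthm}(2)), and the Zariski density of the fundamental-group image in $G_E$ recalled just above the statement (which amounts to $\overline{G}_{\rho_E} = G_E$) — together with Grothendieck's Galois correspondence for finite \'etale covers.

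First I would pin down the image $G_{\rho_{E_0}}$. By Theorem~\ref{hauptthm}(2), after identifying the fibre $E_{0,y}$ with $E_x$ for a chosen lift $y \in Y(\bbC_p)$ of $x$, the representation $\rho_{E_0}$ is the restriction of $\rho_E$ to $\pi_1(Y,y)$. By the construction of $f_0$, the subgroup $\pi_1(Y,y)$ equals $\rho_E^{-1}(\overline{G}_{\rho_E}^0(\bbC_p))$, so
\[
G_{\rho_{E_0}} \;=\; G_{\rho_E}\cap \overline{G}_{\rho_E}^0(\bbC_p),
\]
a normal subgroup of $G_{\rho_E}$ of index $|H|$. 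Its Zariski closure $\overline{G}_{\rho_{E_0}}$ is therefore a closed normal subgroup of $\overline{G}_{\rho_E}$ whose index divides $|H|$. Because any closed finite-index subgroup of an algebraic group is a union of connected components, $\overline{G}_{\rho_{E_0}}$ contains the identity component $\overline{G}_{\rho_E}^0$. The reverse inclusion is immediate since $G_{\rho_{E_0}} \subseteq \overline{G}_{\rho_E}^0(\bbC_p)$, so $\overline{G}_{\rho_{E_0}} \subseteq \overline{G}_{\rho_E}^0$. This establishes the first equality.

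Next, for the description of $\mathrm{Aut}_X(Y)$: since $\pi_1(Y,y)$ is the preimage under $\rho_E$ of the normal subgroup $\overline{G}_{\rho_E}^0(\bbC_p)\trianglelefteq\overline{G}_{\rho_E}$, it is itself normal in $\pi_1(X,x)$. By the Galois correspondence (\cite{sga1}, Exp.~V) the covering $f_0\colon Y \to X$ is Galois with $\mathrm{Aut}_X(Y) \cong \pi_1(X,x)/\pi_1(Y,y) = H$. The identification $H = G_E/G_E^0(\bbC_p)$ then follows from the recalled density $\overline{G}_{\rho_E} = G_E$, which also gives $\overline{G}_{\rho_E}^0 = G_E^0$ and hence $H = \overline{G}_{\rho_E}/\overline{G}_{\rho_E}^0(\bbC_p) = G_E/G_E^0(\bbC_p)$.

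For the last equality $G_{E_0} = G_E^0$, I would apply the same density statement to $E_0$ on $Y$. Since $f_0$ is finite \'etale and $E \in \mathfrak{B}^{ps}_{X_{\bbC_p}}$, also $E_0 \in \mathfrak{B}^{ps}_{Y_{\bbC_p}}$, so by the recalled density applied to $Y$ and $E_0$ one has $\overline{G}_{\rho_{E_0}} = G_{E_0}$. Chaining the equalities then yields $G_{E_0} = \overline{G}_{\rho_{E_0}} = \overline{G}_{\rho_E}^0 = G_E^0$. The main obstacle in the whole argument is really the very first step: one must verify that the Zariski closure of the finite-index normal subgroup $G_{\rho_{E_0}} \trianglelefteq G_{\rho_E}$ actually fills the entire identity component $\overline{G}_{\rho_E}^0$, which rests on the general fact that closed finite-index subgroups of an algebraic group are unions of connected components. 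Once this is in place, the remaining two equalities follow by clean bookkeeping with the Galois correspondence and the Zariski-density fact recalled in the preamble.
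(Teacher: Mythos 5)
Your argument is correct and follows the paper's own route: the covering $f_0$ is constructed from the surjection $\pi_1(X,x)\onto H=\overline{G}_{\rho_E}/\overline{G}_{\rho_E}^0(\bbC_p)$, the compatibility of pullback with $\rho$ from Theorem~\ref{hauptthm}(2) identifies $G_{\rho_{E_0}}$ with $G_{\rho_E}\cap\overline{G}_{\rho_E}^0(\bbC_p)$, and the passage from the closures $\overline{G}_{\rho_{E_0}},\overline{G}_{\rho_E}$ to the Tannaka groups $G_{E_0},G_E$ rests on exactly the Zariski-density fact the paper recalls immediately before the statement. You merely make explicit details the paper leaves implicit (that a closed finite-index subgroup of an algebraic group contains the identity component, and that normality of $\pi_1(Y,y)$ gives $\mathrm{Aut}_X(Y)\cong H$ via the Galois correspondence), which is all to the good.
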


\begin{cor} \label{stable}
Let $E$ be a stable vector bundle with potentially strongly semistable reduction of degree $0$, such that the Tannaka dual group $G_E$ is connected. Then for every finite morphism $f: Y \to X$ of smooth, projective curves the pullback bundle $f_{\bbC_p}^*(E)$ is also stable.
\end{cor}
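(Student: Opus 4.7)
My plan is to translate the statement into a question about the associated representations of the \'etale fundamental group and use the connectedness hypothesis to show that restricting this representation along $f$ preserves Zariski density in $G_E$.

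First I would observe that because $E$ is stable in the abelian category $\mathfrak{B}^{ps}_{X_{\bbC_p}}$ and the functor $\rho$ is exact (Theorem \ref{hauptthm}(1)) and fully faithful (by Faltings, as recalled just after Theorem \ref{hauptthm}), $E$ is a simple object in that category, so its representation $\rho_E\colon\pi_1(X,x)\to\mathbf{GL}(E_x)$ is irreducible and in particular semisimple. Proposition \ref{gleich} then identifies $\overline{G}_{\rho_E}=G_E$, and by hypothesis this group is connected and acts irreducibly on $E_x$. By Theorem \ref{hauptthm}(2), $f^*E$ again lies in $\mathfrak{B}^{ps}_{Y_{\bbC_p}}$ with associated representation $\rho_{f^*E}=\rho_E\circ f_*$, where $f_*\colon\pi_1(Y,y)\to\pi_1(X,f(y))$ is induced by $f$ for some point $y$ over $x$. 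Hence it will suffice to show that $\rho_{f^*E}$ is again irreducible, since then $f^*E$ is a simple object of $\mathfrak{B}^{ps}_{Y_{\bbC_p}}$, i.e.\ stable.

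The crucial ingredient is the claim that the image $f_*\pi_1(Y,y)\subseteq\pi_1(X,f(y))$ is of finite index. Since we are in characteristic zero, $f$ is generically \'etale; letting $B\subset X$ denote the branch locus, $V:=f^{-1}(X\setminus B)\to X\setminus B$ is a finite \'etale cover of degree $\deg f$, so $\pi_1(V)$ embeds in $\pi_1(X\setminus B)$ with index $\deg f$. Combining this with the surjections $\pi_1(V)\twoheadrightarrow\pi_1(Y)$ and $\pi_1(X\setminus B)\twoheadrightarrow\pi_1(X)$ coming from the inclusions of open subcurves, a short diagram chase shows that $f_*\pi_1(Y,y)$ has index dividing $\deg f$ in $\pi_1(X,f(y))$.

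Given this, $\rho_E(f_*\pi_1(Y,y))$ has finite index in $\rho_E(\pi_1(X,x))$, so its Zariski closure is a closed subgroup of $G_E=\overline{G}_{\rho_E}$ of finite index. Connectedness of $G_E$ forces this closure to be $G_E$ itself, whence $\rho_{f^*E}(\pi_1(Y,y))$ is Zariski dense in a group acting irreducibly on $E_x$, and irreducibility of $\rho_{f^*E}$ follows. The most delicate point I anticipate is the finite-index statement in the possibly ramified case; everything else is a rather formal use of the Tannaka/fundamental-group dictionary developed in Sections 2 and 3.
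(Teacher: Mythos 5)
Your argument has the same skeleton as the paper's: the key lemma that $f_*\pi_1(Y,y)$ has finite index in $\pi_1(X,x)$ is proved by exactly the paper's device (pass to the \'etale locus, use that $\pi_1$ of an open subcurve surjects onto $\pi_1$ of the curve), and the decisive step is the same appeal to connectedness to show that a finite-index closed subgroup of the algebraic monodromy group must be everything. That part of your proposal is correct and matches the paper.

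The problem is your opening step: ``$E$ is a simple object in $\mathfrak{B}^{ps}_{X_{\bbC_p}}$, so $\rho_E$ is irreducible and in particular semisimple,'' followed by the appeal to Proposition \ref{gleich}. Full faithfulness of $\rho$ (Faltings) gives you $\mathrm{End}_{\pi_1(X,x)}(E_x)=\mathrm{End}(E)=\bbC_p$, but it does not let you transport $\pi_1$-stable subspaces of $E_x$ back to subobjects of $E$; a priori $E_x$ could be a non-split extension of $\pi_1(X,x)$-modules even though $E$ is simple. To rule this out you would already need $\rho_E$ to be semisimple --- and the paper is consistently careful to treat semisimplicity of $\rho_E$ as an \emph{additional hypothesis} (it is assumed explicitly in Proposition \ref{gleich} and Theorem \ref{result3}, and the equality $\overline{G}_{\rho_E}=G_E$ for polystable bundles is described as conjectural in general). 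So Proposition \ref{gleich} is not available to you here, and your identification $\overline{G}_{\rho_E}=G_E$ is unjustified. Note that the paper's own proof sidesteps this: it never invokes Proposition \ref{gleich}, but works directly with the inclusion $\overline{G}_{\rho_{f^*E}}\subseteq\overline{G}_{\rho_E}$ of finite index (connectedness of $\overline{G}_{\rho_E}$ follows from connectedness of $G_E$ via Proposition \ref{uber}, which identifies the component groups) and concludes from $\overline{G}_{\rho_{f^*E}}=\overline{G}_{\rho_E}$, together with full faithfulness, that the relevant Hom- and invariant spaces do not grow under pullback. If you rewrite your last step along those lines --- e.g.\ deducing $\mathrm{End}(f^*E)=\mathrm{End}_{\overline{G}_{\rho_{f^*E}}}(E_x)=\mathrm{End}_{\overline{G}_{\rho_E}}(E_x)=\bbC_p$ and combining this with polystability of $f^*E$ --- you avoid ever asserting irreducibility of $\rho_E$ itself.
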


\begin{proof}
It is known that the image of the canonical map $\pi_1(Y,y) \to \pi_1(X,x)$ has finite index in $\pi_1(X,x)$. To see this, choose a divisor $D$ on $X$ such that $f|_V: V:= Y\backslash f^{-1}(D) \to X \backslash D =: U$ is \'etale. Then $\pi_1(V,y) \into \pi_1(U,x)$ has finite index and $\pi_1(U,x) \onto \pi_1(X,x)$, $\pi_1(V,y) \onto \pi_1(Y,y)$ are surjective for curves, which at once yields the claim. Then $G_{\rho_{f^*(E)}} \subseteq G_{\rho_{E}}$ also has finite index, and since $G_{\rho_{E}}$ is connected, these algebraic groups have to coincide, so the pullback bundle $f^*(E)$ is stable.
\end{proof}

\begin{prop} \label{halbeinfach}
Let $E$ be a stable vector bundle of rank $\geq 2$ with potentially strongly semistable reduction of degree $0$. If $\det(E)$ has finite order and if $E_0$ is again a stable bundle, then $G_E^0$ is a semisimple algebraic group and the fibre $E_x$ is an irreducible $G_E^0$-module.
\end{prop}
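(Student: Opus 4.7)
The plan is to handle the irreducibility of $E_x$ as a $G_E^0$-module first, and then use the determinant hypothesis to squeeze the radical of $G_E^0$ down to the identity.

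First I would unwind the structural setup. Since $E$ is stable of degree $0$, it is polystable, so $G_E$ is linear reductive and hence $G_E^0$ is reductive by Proposition \ref{reduktiv}. Proposition \ref{uber} produces a finite \'etale cover $f_0:Y\to X$ and the vector bundle $E_0 = f_0^*E$ with the crucial identification $G_{E_0} = G_E^0$; choosing a point $y\in Y(\bbC_p)$ above $x$, the canonical identification $(E_0)_y = E_x$ makes $E_x$ a $G_E^0$-module.

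The next step is to deduce that $E_x$ is irreducible as a $G_E^0$-module from the hypothesis that $E_0$ is stable on $Y_{\bbC_p}$. Being stable of degree $0$, the bundle $E_0$ is a simple object in the abelian Tannaka category $\mathfrak{B}_{E_0}$. Under the equivalence of tensor categories $\mathfrak{B}_{E_0} \iso \mathbf{Rep}_{G_{E_0}}(\bbC_p)$ via the fibre functor $\omega_y$, simple objects correspond to irreducible representations, so $(E_0)_y = E_x$ is an irreducible $G_{E_0} = G_E^0$-module.

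Now I would extract semisimplicity of $G_E^0$ using the determinant assumption. Since $G_E^0$ is reductive, its radical $R(G_E^0)$ is a central torus in $G_E^0$. Schur's lemma, applied to the irreducible $G_E^0$-module $E_x$, forces $R(G_E^0)$ to act on $E_x$ by scalars, that is, via a character $\chi \in X^*(R(G_E^0))$. On the other hand, the line bundle $\det(E)$ lies in $\mathfrak{B}_E$ and corresponds to the determinant character $G_E \to \mathbf{GL}(\det E_x) = \bbG_m$; since $\det(E)^n \iso \scrO_{X_{\bbC_p}}$ for some $n$, this character factors through $\mu_n$, and restricted to the connected group $G_E^0$ it must be trivial. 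Thus $G_E^0 \subseteq \mathbf{SL}(E_x)$, whence $\chi^r = 1$ on $R(G_E^0)$ with $r = \mathrm{rk}(E) \geq 2$. Because the character group of a torus is torsion-free, this forces $\chi = 1$, so $R(G_E^0)$ acts trivially on $E_x$. The embedding $G_E^0 \subseteq G_E \hookrightarrow \mathbf{GL}(E_x)$ is faithful, so $R(G_E^0)$ is trivial and $G_E^0$ is semisimple.

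The only subtle step is the transition from the geometric statement \emph{$E_0$ is stable} to the representation-theoretic statement \emph{$E_x$ is an irreducible $G_E^0$-module}; once that is in place, the determinant argument is routine. Everything else is a matter of combining Proposition \ref{uber} with standard facts about reductive groups over a field of characteristic $0$.
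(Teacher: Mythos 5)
Your proof is correct and follows essentially the same route as the paper: Proposition \ref{uber} plus stability of $E_0$ gives irreducibility of $E_x$ as a $G_E^0$-module, and then Schur's lemma combined with $\det^n(E)\iso\scrO_{X_{\bbC_p}}$ (forcing $G_E^0\subseteq\mathbf{SL}_{E_x}$ by connectedness) kills the radical. The only cosmetic difference is at the end: the paper shows $Z(G_E^0)$ is finite because it lies in the scalars intersected with $\mathbf{SL}(E_x)$, so $R(G_E^0)=Z(G_E^0)^0$ is trivial, whereas you argue via torsion-freeness of the character lattice of the central torus $R(G_E^0)$ — these are the same argument in slightly different clothing.
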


\begin{proof}  
We have seen in Proposition \ref{uber} that the Tannaka dual group of the bundle $E_0$ is the connected algebraic group $G_E^0$. The assumptions then imply that the fibre $E_x$ is an irreducible $G_E^0$-module. Since $G_E^0$ is a reductive algebraic group, it satisfies $R(G_E^0) = Z(G_E^0)^0$, where $Z(G_E^0)$ denotes the center of $G_E^0$. It remains to show that $Z(G_E^0)$ is a finite group. We have $Z(G_E^0)(\bbC_p) \subseteq \mbox{End}_{G_E^0}(E_x) \cap \mbox{SL}(E_x)$, because $Z(G_E^0)(\bbC_p)$ commutes with $G_E^0(\bbC_p)$, so every element in $Z(G_E^0)(\bbC_p)$ defines a $G_E^0(\bbC_p)$-endomorphism of $E_x$. On the other hand $\mbox{det}^n(E) \iso \scrO_{X_{\bbC_p}}$ for some $n \in \bbN$ yields $G_E^0 \subseteq \mathbf{SL}_{E_x}$. But then the Lemma of Schur implies $Z(G_E^0)(\bbC_p) \subseteq \lambda\mbox{id}_r \cap \mathbf{SL}(E_x)$ with $\lambda \in \bbC_p$, hence $Z(G_E^0)(\bbC_p)$ is finite. 
\end{proof}

For $q \in \bbQ^+$ denote by $\frako_q$ the ideal $\frako/p^q\frako$, where $p^q$ is a zero of $X^b-p^a$ in $\frako$, with $q = a/b$ for positive integers $a,b$. Observe that this ideal is independent of the choice of the zero, since they all have the same absolute value.  

\begin{prop} \label{prop2}
Let $X$ be a smooth, connected and projective curve over $\overline{\bbQ}_p$ and let $E$ be a vector bundle of rank $r$ on $X_{\bbC_p}$. Denote by $\frakX$ a model of $X$ and by $\calE$ a vector bundle on $\frakX_{\frako}$ with generic fibre $E$, such that the vector bundle $\calE_q$ is trivial for some $q \in \bbQ^+$. In particular, the bundle $E$ has strongly semistable reduction and the image of the representation $\rho_E: \pi_1(X,x) \to \mbox{GL}_r(E_x)$ lies (possibly after a change of basis) in $\mbox{GL}_r(\frako)$ and is trivial modulo $p^q\frako$.
\end{prop}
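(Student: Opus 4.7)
The claim has two parts: that $E$ has strongly semistable reduction, and that in a suitable basis of $E_x$ the image of $\rho_E$ lies in $\mbox{GL}_r(\frako)$ and is congruent to the identity modulo $p^q\frako$. The first follows immediately: reducing the given trivialization of $\calE_q$ further to the residue field yields $\calE_k \cong \scrO_{\frakX_k}^r$, which is strongly semistable of degree $0$ on every component of $\frakX_k$. Hence $E \in \mathfrak{B}^s_{X_{\bbC_p}} \subseteq \mathfrak{B}^{ps}_{X_{\bbC_p}}$, and the representation $\rho_E$ of Theorem \ref{hauptthm} is defined.

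The plan for the second part is to choose a basis of $E_x$ induced by the trivialization of $\calE_q$ and compute the Deninger-Werner parallel transport in this basis. Since $x \in X(\bbC_p) = \frakX(\frako)$ extends to a section $\mbox{spec}\,\frako \to \frakX_\frako$, the pullback $x^*\calE$ is finitely generated and torsion-free over the valuation ring $\frako$, hence free of rank $r$. A fixed isomorphism $\calE_q \cong \scrO_{\frakX_q}^r$ provides $r$ trivializing global sections; pulled back along $x$ and reduced modulo $\frakm$ they form a basis of $(x^*\calE)/\frakm$, which by Nakayama's lemma lifts to an $\frako$-basis $e_1,\dots,e_r$ of $x^*\calE$ and hence to a $\bbC_p$-basis of $E_x$.

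Next, recall that $\rho_E$ is built as a $p$-adic limit of parallel transports, where at each modular level one uses a covering $\pi: \mathcal{Y} \to \frakX$ in $\mathcal{S}_{\frakX, D}$ trivializing $\pi^*\calE$ at that level. For any level at which $\calE$ is already trivial on $\frakX$ itself -- including mod $p^q$ by hypothesis -- the identity $\mbox{id}_\frakX \in \mathcal{S}_{\frakX, \emptyset}$ itself qualifies. With the identity covering, every $\bbC_p$-point is its own unique lift, so the parallel transport along any path is determined purely by the trivializing global sections; in the basis $e_1,\dots,e_r$, which was chosen compatibly with this very trivialization, the parallel transport along any loop at $x$ is therefore literally the identity matrix modulo $p^q$. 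This yields $\rho_E(\gamma) \equiv \mbox{id}_r \pmod{p^q\frako}$ for every $\gamma \in \pi_1(X,x)$. At higher modular levels the parallel transport remains an automorphism of the free $\frako_n$-module $(\calE_n)_x \cong \frako_n^r$, hence lies in $\mbox{GL}_r(\frako_n)$. Passing to the limit and using the $p$-adic completeness of $\frako$ (the closed unit ball of the complete field $\bbC_p$) gives $\rho_E(\gamma) \in \projlim_n \mbox{GL}_r(\frako_n) = \mbox{GL}_r(\frako)$.

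The main technical obstacle is to make precise the identification of the identity-covering parallel transport with the identity matrix in the chosen basis: one must unpack Deninger-Werner's construction, verify that their formalism accommodates the rational modulus $p^q$ (natural since $\frako_q$ is defined in the paper), and observe that with $\pi = \mbox{id}_\frakX$ the two successive fibre identifications through the trivialization cancel out along any loop, so the parallel transport is literally the identity by construction.
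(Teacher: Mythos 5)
Your strategy coincides with the paper's: both deduce strongly semistable reduction from the triviality of the special fibre $\calE_k$, and both obtain the congruence $\rho_E(\gamma)\equiv 1_r \bmod p^q\frako$ by computing the level-$q$ parallel transport with a trivializing covering whose generic fibre is an isomorphism onto $X$, so that the mod-$p^q$ representation factors through a trivial automorphism group $\mbox{Aut}_{X\backslash D}(Y\backslash\pi^*_{\overline{\bbQ}_p}D)=\{1\}$. However, the step you set aside as ``the main technical obstacle'' is not routine bookkeeping: it is the substantive content of the paper's proof. The representation $\rho_{\calE}$ is defined as $\projlim_n\rho_{\calE,n}$ over a chosen divergent sequence of levels $(q_n)$, each level computed with its own covering in $\mathcal{S}^{good}_{\frakX,D}$. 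To conclude that the limit is trivial modulo $p^q\frako$ you must know that its reduction at level $q$ agrees with the transport computed directly at level $q$ using your particular covering --- that is, that the construction is independent of the chosen sequence of levels and compatible with the reduction maps $\frako_{q'_n}\to\frako_{q_n}$. The paper devotes roughly half of its proof to exactly this verification (the commutative diagrams comparing $\rho_{\calE,n}$ and $\rho'_{\calE,n}$ for two sequences, and the resulting $\pi_1(X\backslash D,x)$-equivariant identification of the two limits). Your proposal asserts the conclusion of this verification without carrying it out, so as written there is a gap at precisely the point where the argument requires proof.

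A second, smaller imprecision: you take the trivializing covering to be $\mbox{id}_{\frakX}$ itself. This lies in $\mathcal{S}_{\frakX,D}$, but the construction of $\rho_{\calE,n}$ uses objects of $\mathcal{S}^{good}_{\frakX,D}$, and $\mbox{id}_{\frakX}$ need not satisfy $\lambda_*\scrO_{\mathcal{Y}}=\scrO_{\mbox{spec}\,R}$ universally. The paper therefore replaces it by a strictly dominating object $\pi:\mathcal{Y}\to\frakX$ in $\mathcal{S}^{good}_{\frakX,D}$ whose generic fibre $Y\to X$ is an isomorphism; the pullback $\pi^*_q\calE_q$ is still trivial and the automorphism group over $X\backslash D$ is still trivial, so the argument goes through. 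This is easily repaired, but it should be said; the basis-lifting via Nakayama and the limit argument placing the image in $\mbox{GL}_r(\frako)$ are fine and essentially match what the paper does via the isomorphism $\psi_x$.
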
 

\begin{proof}
Let $(q_n)_{n \in \bbN}$ be a monotonously growing and divergent sequence of positive rational numbers. A vector bundle $\calE$ on $\frakX_{\frako}$ lies in the category $\mathfrak{B}_{\frakX_{\frako},D}$ if and only if for all $n \in \bbN$ there is a morphism $\pi: \mathcal{Y} \to \frakX$ in $\mathcal{S}^{good}_{\frakX,D}$ such that the reduction  $\pi_{q_n}^*\calE_{q_n}$ on $\mathcal{Y}_{q_n} := \mathcal{Y} \tensor_{\overline{\bbZ}_p} \frako/p^{q_n}\frako$ is the  trivial bundle. Then one can define continuous morphisms $\rho_{\calE,n}: \pi_1(X\backslash D,x) \to \mbox{Aut}_{\frako_{q_n}}(\calE_{x_{q_n}})$ as in \cite{dw2}, page 579f, and one obtains a continuous morphism $\projlim_{n \in \bbN}\rho_{\calE,n} = \rho_{\calE}: \pi_1(X\backslash D,x) \to \mbox{Aut}_{\frako}(\calE_{x_{\frako}})$. We have to show that this construction is independent from a choice of the sequence $(q_n)$. Denote by $(q'_n)$ another sequence as above. Since the projective limit does not change if one takes subsequences, we may assume $q_n \leq q'_n$ for all $n \in \bbN$. Consider for $\gamma \in \pi_1(X\backslash D,x)$ and $y \in \mathcal{Y}(\bbC_p)$ the commutative diagram $$\xymatrix{ \mbox{spec }\frako_{q_n} \ar[d]^-a \ar[r]^-{y_{q_n}} & \mathcal{Y}_{q_n} \ar[d]^-b & \ar[l]_-{\gamma y_{q_n}} \mbox{spec } \frako_{q_n} \ar[d]^-a \\ \mbox{spec }\frako_{q'_n} \ar[r]^-{y_{q'_n}} & \mathcal{Y}_{q'_n} & \ar[l]_-{\gamma y_{q_n}} \mbox{spec }\frako_{q'_n}}$$ with canonical vertical morphisms denoted by $a$ und $b$.  This yields at once the commutative diagram $$\xymatrix{\calE_{x_{q'_n}} \ar[d]^{a^*} & \Gamma(\mathcal{Y}_{q'_n},\pi^*_{q'_n}\calE_{q'_n}) \ar[l]^-{\sim}_-{y^*_{q'_n}} \ar[d]^{b^*} \ar[r]_-{\sim}^-{(\gamma y)^*_{q'_n}} & \calE_{x_{q'_n}} \ar[d]^{a^*} \\ \calE_{x_{q_n}} & \Gamma(\mathcal{Y}_{q_n}, \pi^*_{q_n}\calE_{q_n}) \ar[l]^-{\sim}_-{y^*_{q_n}} \ar[r]_-{\sim}^-{(\gamma y)^*_{q_n}} & \calE_{x_{q_n}}.}$$ In particular, the $\pi_1(X\backslash D,x)$-action on $\calE_{x_{q_n}}$ with respect to the canonical morphism $a^*$ is compatible with the action of this group on $\calE_{x_{q'_n}}$, because due to \cite{dw2}, page 579 we have $\rho_{\calE,n}(\gamma) = (\gamma y_{q_n})^* \circ (y^*_{q_n})^{-1}$ respectively $\rho'_{\calE,n}(\gamma) = (\gamma y_{q'_n})^* \circ (y^*_{q'_n})^{-1}$. Hence the commutative diagram $$\xymatrix{\projlim\limits_{n \in \bbN}\calE_{x_{q'_n}} \ar[d]^{\wr} \ar[r] & \projlim\limits_{n \in \bbN}\calE_{x_{q_n}} \ar[d]^{\wr} \\ \calE_{x_{\frako}} \ar[r]^{\mbox{id}} & \calE_{x_{\frako}}}$$ consists of $\pi_1(X\backslash D,x)$-equivariant morphisms, where the group action on $\mathcal{E}_{x_{\frako}}$ on the left is given by $\rho'_{\calE}$ and on the right by $\rho_{\calE}$.\medskip

Let us now consider a vector bundle $\calE$ as in Proposition \ref{prop2}.     
It lies in the category $\mathfrak{B}_{\frakX_{\frako},D}$ due to \cite{dw2}, Theorem 16, because its special fibre $\calE_{\overline{\bbF}_p}$ is the trivial bundle. It follows that its generic fibre $E$ lies in $\mathfrak{B}^{s}_{X_{\bbC_p}} = \bigcup_D\mathfrak{B}_{X_{\bbC_p},D}.$ One can explicitly describe the corresponding representation $\rho_E: \pi_1(X,x) \to \mbox{GL}(E_x)$, see \cite{dw2}, page 587:
Denote by $j_{\frakX_{\frako}}: X_{\bbC_p} \into \frakX_{\frako}$ the canonical embedding and let $\psi: E \To{\sim}j^*_{\frakX_{\frako}}\calE$ be some isomorphism of vector bundles on the curve $X_{\bbC_p}$. Furthermore, let  $$\psi_x: E_x \Longto{\sim} (j^*_{\frakX_{\frako}}\calE)_x = \calE \tensor_{\frako}\bbC_p$$be the corresponding morphism of the fibres. For all $\gamma \in \pi_1(X\backslash D,x)$ we then get a representation $\rho_{E,D} : \pi_1(X\backslash D) \to \mbox{GL}(E_x)$ given by $$\rho_{E,D}(\gamma) = \psi_x^{-1} \circ (\rho_{\calE}(\gamma) \tensor_{\frako}\bbC_p) \circ \psi_x.$$ The representation $\rho_E$ is induced by the surjective morphism $\pi_1(X\backslash D,x) \onto \pi_1(X,x)$ together with the representation $\rho_{E,D}$, see \cite{dw2}, Proposition 35. It remains to show that $\rho_{\calE}(\gamma)$ becomes trivial modulo $p^q\frako$. The identity morphism $\mbox{id}: \frakX \to \frakX$ is strictly dominated by some morphism $\mathcal{Y} \To{\pi} \frakX$ in $\mathcal{S}^{good}_{\frakX,D}$ (\cite{dw2}, Theorem 1), i.e. there exists a commutative diagram $$\xymatrix{\mathcal{Y} \ar[rr]^{\varphi} \ar[rd]^{\pi} & & \frakX \ar[dl]^{\mbox{id}} \\ & \frakX & }$$ such that the induced morphism of the generic fibres $$Y := \mathcal{Y} \tensor_{\overline{\bbZ}_p} \overline{\bbQ}_p \longto X$$ is an isomorphism, because $Y$ is a smooth, connected and projective curve due to the definition of the category $\mathcal{S}^{good}_{\frakX, D}$ (\cite{dw2}, page 556). Hence the pullback $\pi^*_q \calE_q$ is the trivial bundle, since  $\calE_q$ is trivial. The representation $\rho_{\calE}$ modulo $p^q\frako$ =: $\rho_{\calE, 1}: \pi_1(X\backslash D,x) \to \mbox{GL}_r(\frako_q)$ now factors over the group $\mbox{Aut}_{X\backslash D}(Y\backslash \pi^*_{\overline{\bbQ}_p}D)= \{1\}.$  
\end{proof}  

The following Lemma is needed for the proof of the main result of this section.

\begin{lemma} \label{lemma1}
Let $\mathfrak{m}$ denote the maximal ideal of the valuation ring $\frako$ of $\bbC_p$. For 
$\lambda \in \frako$ and $M \in M_r(\mathfrak{m})$ an $(r \times r)$-matrix with entries in $\mathfrak{m}$ we have \begin{enumerate} \item If $1- \lambda$ is a primitive $l^n$th root of unity, then $|\lambda| = 1$ holds for $ l \neq p$ and $|\lambda| = p^{-\frac{n}{p^n-1}}$ holds for $l = p$.\item If $1-\lambda$ is an eigenvalue of $1_r - M$, we have the inequality $|\lambda| \leq |M|$. \end{enumerate} \end{lemma}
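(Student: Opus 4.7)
The plan is to handle the two parts separately. Part (i) is a computation in cyclotomic extensions of $\bbQ_p$, while part (ii) reduces to a routine non-Archimedean bound on the eigenvalues of a matrix with small entries.

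For part (i), I would set $\zeta := 1 - \lambda$, so that $\zeta$ is a primitive $l^n$-th root of unity, and split into two cases. If $l \neq p$, then $X^{l^n} - 1$ remains separable modulo $\mathfrak{m}$ (since $l$ is a unit in the residue field $k = \overline{\bbF}_p$), so by Hensel's lemma the reduction map yields an order-preserving bijection between its roots in $\frako^{\times}$ and its roots in $k^{\times}$. In particular $\bar{\zeta} \in k$ is again a primitive $l^n$-th root of unity and hence different from $1$, which gives $|\lambda| = |\zeta - 1| = 1$. If $l = p$, I would instead use the $p^n$-th cyclotomic polynomial $\Phi_{p^n}(X) = \Phi_p(X^{p^{n-1}})$, which is the minimal polynomial of $\zeta$ over $\bbQ_p$ and which satisfies $\Phi_{p^n}(1) = \Phi_p(1) = p$. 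The classical fact that $\bbQ_p(\zeta)/\bbQ_p$ is totally ramified of degree $\phi(p^n)$ (which follows, for example, by checking that $\Phi_{p^n}(X + 1)$ is Eisenstein at $p$) then shows that the unique extension of the $p$-adic absolute value satisfies $|1 - \zeta|^{\phi(p^n)} = |N_{\bbQ_p(\zeta)/\bbQ_p}(1 - \zeta)| = |\Phi_{p^n}(1)| = |p|$, from which the asserted formula for $|\lambda|$ follows.

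For part (ii), the key observation is that if $1 - \lambda$ is an eigenvalue of $1_r - M$, then $\lambda$ is an eigenvalue of $M$. Pick a nonzero eigenvector $v \in \bbC_p^r$ with $Mv = \lambda v$, and choose an index $i$ with $|v_i| = \max_j |v_j| > 0$. The $i$-th component of the eigenvector equation reads $\lambda v_i = \sum_j m_{ij} v_j$, and the strong triangle inequality yields $|\lambda| \cdot |v_i| \leq \max_j |m_{ij}|\cdot|v_j| \leq |M|\cdot|v_i|$; dividing by $|v_i|$ gives $|\lambda| \leq |M|$.

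The only nontrivial input is the total ramification of $\bbQ_p(\zeta_{p^n})/\bbQ_p$ used in the $l = p$ case; once this is in hand, the norm computation against $\Phi_{p^n}(1)$ makes the absolute value immediate, so I do not expect any real obstacle beyond citing (or reproving) this standard fact.
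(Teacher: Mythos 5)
Your part (ii) is correct and takes a genuinely different route from the paper: you pass to an eigenvector and use the ultrametric estimate on the maximal coordinate, whereas the paper bounds the coefficients of the characteristic polynomial of $M$ by $|m_i|\leq |M|^{r-i}$ and observes that the leading term would dominate if $|\lambda|>|M|$. Both arguments are sound and of comparable length. The $l\neq p$ case of part (i) is also fine and essentially matches the paper (nontriviality of the reduction of a primitive $l^n$-th root of unity).

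The problem is the last step of the $l=p$ case. Your computation is the standard, correct one, but it does \emph{not} yield the asserted formula: total ramification and $|N_{\bbQ_p(\zeta)/\bbQ_p}(1-\zeta)|=|\Phi_{p^n}(1)|=|p|$ give $|\lambda|=|1-\zeta|=p^{-1/\varphi(p^n)}=p^{-1/(p^{n-1}(p-1))}$, whereas the lemma asserts $|\lambda|=p^{-n/(p^n-1)}$. These coincide only for $n=1$; already for $p=2$, $n=2$ one has $|1-i|=2^{-1/2}$, not $2^{-2/3}$. So the sentence ``from which the asserted formula for $|\lambda|$ follows'' is false --- and in fact your (correct) computation disproves the formula as stated. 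The paper's own proof is where the error originates: it expands $(1-\lambda)^{p^n}-1=0$ and claims the terms $i=1$ and $i=p^n$ of $\sum_i\binom{p^n}{i}(-\lambda)^i$ must balance because all intermediate terms are strictly smaller than $|p^n\lambda|$; this fails for $n\geq 2$ (e.g.\ the term $i=p^{n-1}$ has $v_p\bigl(\binom{p^n}{p^{n-1}}\bigr)=1$ and is not dominated), reflecting the fact that the Newton polygon of this polynomial has $n$ distinct slopes $1/\varphi(p^j)$, $j=1,\dots,n$, not one. Fortunately the only property used downstream (Theorem 3.12) is that a nontrivial root of unity $1-\lambda$ forces $|\lambda|\geq p^{-1/(p-1)}$, and since $1/\varphi(p^n)\leq 1/(p-1)$ for all $n$, your corrected value still delivers this; so you should state and prove the correct formula $|\lambda|=p^{-1/(p^{n-1}(p-1))}$ rather than the one in the lemma.
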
  

\begin{proof}
\begin{enumerate} 
\item The primitive $l^n$-th roots of unity are not equal to $1$ in the residue field $\frako/\mathfrak{m} = \overline{\mathbb{F}}_p$ if $l \neq p$, so the first claim follows at once. For $l = p$, the roots of unity are trivial in $\frako/\mathfrak{m}$, therefore $\lambda \in \frako$ holds. Furthermore, because of $(1-\lambda)^{p^n} - 1 = 0$, we have the equation $\sum_{i = 1}^{p^n}{p^n \choose i}(-\lambda)^i = 0$. This can only be true if the first and the last term have the same absolute value, i.e.  $|\lambda|^{p^n} = |p^n\lambda|$, because the absolute values of all the other terms are strictly smaller than $|p^n\lambda|$. Hence we have $|\lambda| = p^{-\frac{n}{p^n-1}}$. \item The characteristic polynomial of the matrix $M$ is $$p(x) = \mbox{det}(M-x\cdot 1_r) = (-x)^r + \sum_{i=0}^{r-1}m_ix^i \;\; \mbox{with }|m_i|\leq |M|^{r-i}.$$  
It is obvious that a zero $\lambda$ of the polynomial $p$ satisfies the inequality $|\lambda|\leq |M|$, otherwise the first term would be strictly greater than all the other terms. The claim now follows from the fact that $p(1-x)$ is the characteristic polynomial of the matrix $1_r-M$. \end{enumerate} \end{proof}

An algebraic group $G$ is not connected if and only if there exists a non-trivial finite algebraic group $H$ together with a surjection $G \onto H$. For any such surjection, there is a finite-dimensional vector space $V$ and a morphism $G \to \mathbf{GL}_V$ that factors over a closed immersion $H \into \mathbf{GL}_V$, see \cite{hum2}, Theorem 11.5. Hence an algebraic group is connected if and only if there is no non-trivial finite-dimensional representation of $G$ with finite image.

\begin{thm} \label{zusammen}

Let $E$ be a vector bundle of rank $r$ over the smooth, connected and projective curve $X_{\bbC_p}$. If $E$ has the properties described in Proposition \ref{prop2}, with $q > \frac{1}{p-1}$ for odd primes $p$ and $q \geq 1$ for $p=2$, then the algebraic monodromy groups $G_E$ and $\overline{G}_{\rho_E}$ are connected.
\end{thm}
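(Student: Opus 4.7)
The plan is to use the characterization recorded immediately before the theorem: an algebraic group is connected if and only if it admits no non-trivial finite-dimensional representation with finite image. I fix such a representation $\sigma$, either of $\overline{G}_{\rho_E}$ or of $G_E$, and aim to show that $\sigma$ is trivial. Since $\overline{G}_{\rho_E}\hookrightarrow\mathbf{GL}_{E_x}$ is a tensor generator of its category of representations, the underlying space $V$ of $\sigma$ is realised as a subquotient of some $T^{r,s}(E_x)$. In the $G_E$-case, the Tannakian equivalence $\mathfrak{B}_E\iso\mathbf{Rep}_{G_E}(\bbC_p)$ converts $\sigma$ into a bundle $F\in\mathfrak{B}_E$ whose fibre $F_x$ is again a subquotient of some $T^{r,s}(E_x)$, and Theorem \ref{hauptthm} identifies the associated representation $\rho_F(\gamma)$ with the restriction to $F_x$ of the induced action of $\rho_E(\gamma)$. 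In both situations $\sigma\circ\rho_E$, respectively $\rho_F$, factors through a finite subgroup of $\mathbf{GL}(V)(\bbC_p)$.

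Next I estimate eigenvalues. By Proposition \ref{prop2} I may assume $\rho_E(\gamma)\in 1_r+M_r(p^q\frako)$ for every $\gamma\in\pi_1(X,x)$. Lemma \ref{lemma1}(ii) then gives $|\mu-1|\leq p^{-q}$ for every eigenvalue $\mu$ of $\rho_E(\gamma)$; since $p^{-q}<1$ this forces $|\mu|=1$, so $|\mu^{-1}-1|=|\mu-1|\leq p^{-q}$, and the ultrametric inequality propagates this bound to every monomial $\prod_i\mu_i^{a_i}$ with $a_i\in\bbZ$. Consequently every eigenvalue $\nu$ of the induced action on $T^{r,s}(E_x)$, and hence on its subquotient $V$ or $F_x$, satisfies $|\nu-1|\leq p^{-q}$.

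Such a $\nu$ is also a root of unity, since the image of $\sigma\circ\rho_E$ is finite. Lemma \ref{lemma1}(i) excludes the non-trivial values: a primitive $l^n$-th root with $l\neq p$ would give $|\nu-1|=1>p^{-q}$, absurd; a primitive $p^n$-th root with $n\geq 1$ would give $|\nu-1|=p^{-n/(p^n-1)}$, and an elementary check shows $n/(p^n-1)\leq 1/(p-1)$ with strict inequality for $n\geq 2$, so the hypothesis $q>1/(p-1)$ for odd $p$ forces $\nu=1$. An element of $\mathbf{GL}(V)(\bbC_p)$ of finite order is semisimple in characteristic zero, and having all eigenvalues equal to $1$ it must be the identity. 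Thus $\sigma\circ\rho_E$, respectively $\rho_F$, is trivial. In the $\overline{G}_{\rho_E}$-case the Zariski closure of the trivial image is trivial, so $\sigma$ itself vanishes. In the $G_E$-case the triviality of $\rho_F$ combined with the full faithfulness of the Deninger--Werner functor (Theorem \ref{hauptthm} sharpened by Faltings) forces $F\iso\scrO_{X_{\bbC_p}}^{\oplus\dim V}$ in $\mathfrak{B}_E$, and the Tannakian equivalence then identifies $\sigma$ with the trivial $G_E$-module structure on $F_x$.

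The main obstacle is the borderline case $p=2$, $q=1$: here $1/(p-1)=1$ and the estimate $n/(p^n-1)\leq 1/(p-1)$ is attained by $n=1$, so the preceding step does not exclude the primitive second root of unity $\nu=-1$. To settle this case I plan to pass to $\rho_E(\gamma)^2$: expanding $\rho_E(\gamma)^2-1_r=(\rho_E(\gamma)-1_r)(\rho_E(\gamma)+1_r)$ and observing that for $p=2$ both factors lie in $M_r(2\frako)$, one obtains $\rho_E(\gamma)^2\in 1_r+M_r(p^{q+1}\frako)$; applying the eigenvalue analysis to $\sigma(\rho_E(\gamma)^2)=\sigma(\rho_E(\gamma))^2$ with the strictly sharper bound $p^{-(q+1)}=2^{-2}$ then forces $\nu^2=1$, so $\sigma(\rho_E(\gamma))$ has order at most two. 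Eliminating the residual $-1$ eigenvalue requires a finer argument based on the pro-$2$ structure of $1+2M_r(\frako)$ and the compatibility between lattice refinements and the Tannakian action on $V$; this delicate boundary step is where I expect the proof to need the most care.
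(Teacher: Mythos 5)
Your argument follows the paper's own proof almost step for step: the connectedness criterion via finite-image representations, the realisation of any such representation on a subquotient of some $T^{r,s}(E_x)$, the eigenvalue bound $|\mu-1|\leq p^{-q}$ from Proposition \ref{prop2} and Lemma \ref{lemma1}(ii), its propagation to tensor constructions (you do this by the ultrametric estimate on monomials in the eigenvalues, the paper by the reduction functor $\mathbf{Rep}_{\pi_1(X,x)}(\frako)\to\mathbf{Rep}_{\pi_1(X,x)}(\frako/p^q\frako)$ -- these are interchangeable), and finally the exclusion of nontrivial roots of unity via Lemma \ref{lemma1}(i). Your handling of the two endpoints is if anything slightly cleaner than the paper's: the semisimplicity of finite-order elements in characteristic $0$ replaces the paper's case distinction with Jordan blocks, and your use of full faithfulness to conclude $F\iso\scrO_{X_{\bbC_p}}^{\oplus\dim V}$ in the $G_E$-case avoids the paper's appeal to Zariski density of $G_{\rho_E}$ in $G_E$, which strictly speaking is only established under additional hypotheses (Proposition \ref{gleich} assumes $\rho_E$ semisimple and $E$ polystable). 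For odd $p$, and for $p=2$ with $q>1$, your proof is complete.

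The gap you flag at $p=2$, $q=1$ is genuine: there $|{-1}-1|=|2|=2^{-1}=2^{-q}$, so a primitive square root of unity is \emph{not} excluded by the valuation estimate, and your squaring trick only reduces the finite image to an elementary abelian $2$-group rather than killing it. You should be aware, however, that this is not a defect of your write-up relative to the paper: the paper's proof asserts at exactly this point that an eigenvalue trivial mod $p^q$ ``is not a root of unity because of Lemma \ref{lemma1}'', which is false for $p=2$, $q=1$, since Lemma \ref{lemma1}(i) gives $|\lambda|=2^{-n/(2^n-1)}=2^{-1}$ for $n=1$, matching the bound rather than contradicting it. In other words, the paper's argument as written only proves the theorem under the hypothesis $q>\frac{1}{p-1}$ for \emph{all} $p$ (i.e.\ $q>1$ when $p=2$), and the boundary case $q=1$, $p=2$ in the statement is not covered by the proof given there either. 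Closing it would require an input beyond the eigenvalue analysis -- for instance ruling out order-$2$ quotients of $G_E$ by a separate argument -- and you are right to expect that this is where the real work lies.
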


\begin{proof}
Because of Proposition \ref{prop2} we can assume that the corresponding representation $\rho_E: \pi_1(X,x) \to \mathrm{GL}_r(\frako)$ is trivial modulo $p^q\frako$. Consider a matrix $N \in G_{\rho_E} \subseteq \mathrm{GL}_r(\frako)$, so $N = 1_r - M$ with $M \in M_r(p^q\frako)$. The eigenvalues $1- \lambda_i$ of $N$ fulfill the equation $|\lambda_i| \leq p^{-q}$ by Lemma \ref{lemma1}. The set of the eigenvalues of all matrices $N \in G_{\rho_E}$ will be called the eigenvalues of this representation. They are of course independent of a choice of a basis of the vectorspace $E_x$.
Now we consider the canonical functor $$\mathbf{Rep}_{\pi_1(X,x)}(\frako) \longto \mathbf{Rep}_{\pi_1(X,x)}(\frako/p^q\frako),$$which maps a $\pi_1(X,x)$-module $V$ to a module $V \tensor \frako/p^q\frako$ and a morphism $f: V \to W$ to a morphism $f': V \tensor \frako/p^q\frako \to W\tensor \frako/p^q\frako$. It is easy to see that this functor is compatible with duals, tensor products and direct sums. In particular, the eigenvalues of the representations of the subcategory generated by the representation $\rho_E$, i.e. subquotients of direct sums of some $T^{r,s}(E_x)$, with $r,s \in \bbN$, are trivial modulo $p^q$.  
It suffices to show that for all nontrivial $G_E$-modules $V$ the image of the associated representation $\rho: G_E \to \mathbf{GL}_V$ is not finite. Denote by $G_V$ the image of this representation. It follows from Proposition \ref{gleich} that it contains a Zariski dense subgroup that is trivial modulo $p^q$. Let $N \in G_V(\bbC_p)$ be a nontrivial element. 
There are now two possibilities: Either $N$ has a nontrivial eigenvalue, or all eigenvalues are equal to $1$ and $N$ contains a Jordan block of size $> 1$. In the first case the eigenvalue is trivial $\mbox{ mod }p^q$, hence it is not a root of unity because of Lemma \ref{lemma1}. So the group generated by $N$ can not be a finite group. This is also true in the second case.
\end{proof}

In the sequel we will call a vector bundle $E$ on $X_{\bbC_p}$ trivial modulo $p^q$ if it satisfies the assumptions of Theorem \ref{zusammen}. 

\begin{cor}
Let $E$ be a stable vector bundle of degree $0$ on the smooth, connected and projective curve $X_{\bbC_p}$. If $E$ is trivial modulo $p^q$ and $f: Y \to X$ a finite morphism of smooth and projective curves over $\overline{\bbQ}_p$, then the pullback bundle $f^*_{\bbC_p}(E)$ is also stable. \end{cor}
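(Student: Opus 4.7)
The plan is essentially to chain together two results already established in this section: Theorem \ref{zusammen}, which guarantees connectedness of the Tannaka dual group $G_E$, and Corollary \ref{stable}, which upgrades stability of $E$ to stability of pullbacks under finite morphisms whenever $G_E$ is connected.

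More concretely, I would proceed as follows. First, I would verify that the hypotheses of Corollary \ref{stable} are satisfied for $E$. Since $E$ is by assumption trivial modulo $p^q$ (in the sense defined just after Theorem \ref{zusammen}, which bakes in the numerical restriction $q > \tfrac{1}{p-1}$ for odd $p$ and $q \geq 1$ for $p=2$), Proposition \ref{prop2} immediately gives that $E$ has strongly semistable reduction of degree $0$; in particular, $E$ lies in $\mathfrak{B}^{s}_{X_{\bbC_p}} \subseteq \mathfrak{B}^{ps}_{X_{\bbC_p}}$, so $E$ has potentially strongly semistable reduction of degree $0$ as required by Corollary \ref{stable}.

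Next, I would invoke Theorem \ref{zusammen}, which under exactly these hypotheses on $q$ yields that the algebraic monodromy group $G_E$ is connected. With stability of $E$, degree $0$, potentially strongly semistable reduction, and $G_E$ connected all in hand, Corollary \ref{stable} applies verbatim to the finite morphism $f \colon Y \to X$ of smooth projective curves over $\overline{\bbQ}_p$, giving that the pullback $f^{*}_{\bbC_p}(E)$ on $Y_{\bbC_p}$ is stable.

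I do not anticipate any genuine obstacle here: the statement is essentially an application of the connectedness criterion of Theorem \ref{zusammen} to remove the connectedness hypothesis from Corollary \ref{stable}. The only subtlety worth double-checking is that the numerical condition on $q$ implicit in the phrase ``trivial modulo $p^q$'' matches the one required by Theorem \ref{zusammen}, which by the definition introduced immediately after that theorem it does.
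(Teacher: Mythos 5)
Your proof is correct and matches the paper's own argument, which simply says the statement "follows at once from Corollary \ref{stable}"; you have just spelled out the intermediate steps (Proposition \ref{prop2} for membership in $\mathfrak{B}^{ps}_{X_{\bbC_p}}$ and Theorem \ref{zusammen} for connectedness of $G_E$) that the paper leaves implicit.
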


\begin{proof}
Follows at once from Corollary \ref{stable}. 
\end{proof}

We finish this section with a nice criterion by Larsen (see for example \cite{kat1}, Theorem 1.1.6) to check whether the Tannaka dual group of a polystable vector bundle $E$ of degree $0$ and rank $r$ is either finite or one of the classical groups $\mathbf{S}\mathbf{L}_r$, $\mathbf{S}\mathbf{O}_r$ or $\mathbf{S}\mathbf{p}_r$. This works for arbitrary smooth projective varieties over an algebraically closed field of characteristic $0$. Note that the category of semistable vector bundles of degree $0$ is not abelian in general, so it cannot be a neutral Tannaka category. But the subcategory of polystable bundles is neutral Tannakian. 

\begin{prop} \label{larsen} 
Let $E$ be a polystable vector bundle of degree $0$ and rank $r$ on the smooth projective variety $X$ over an algebraically closed field $K$ of characteristic $0$ and denote by $F$ the vector bundle $\mathcal{E}nd(\mathcal{E}nd(E)) = E \tensor E \tensor E^* \tensor E^*$. \begin{enumerate} \item If $\dim_{\bbC_p}(\Gamma(X_{\bbC_p},F)) = 2$, then $G_E \supseteq \mathbf{SL}_{E_x}$ or $G_E/(G_E \cap \mbox{ skalars})$ is a finite algebraic group. If $\det(E)$ is of finite order, we have in particular $G_E^0 = \mathbf{SL}_{E_x}$ or $G_E$ is a finite algebraic group. \item If $\Gamma(X_{\bbC_p}, S^2(E)) \neq 0$ and $\dim_{\bbC_p}(\Gamma(X_{\bbC_p}, F)) = 3$, then  $G_E = \mathbf{O}_{E_x},\; G_E = \mathbf{SO}_{E_x}$ or $G_E$ is a finite algebraic group. \item If $r \geq 3$, $\Gamma(X_{\bbC_p}, \bigwedge^2(E)) \neq 0$ and $\dim_{\bbC_p}(\Gamma(X_{\bbC_p}, F)) = 3$, it follows that $G_E = \mathbf{Sp}_{E_x}$ or that $G_E$ is a finite algebraic group.
\end{enumerate} \end{prop}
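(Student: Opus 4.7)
The plan is to reduce the statement to the representation-theoretic content of Larsen's alternative (\cite{kat1}, Theorem 1.1.6), which we invoke as a black box. The substantive work is translating the hypotheses on global sections of tensor-constructed bundles into dimensions of $G_E$-invariant subspaces of the fibre $V := E_x$; once this translation is done, the three cases follow almost verbatim from Larsen's theorem.

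First, because $E$ is polystable of degree $0$, Proposition~\ref{reduktiv} guarantees that $G_E$ is linear reductive, so every $G_E$-module is semisimple. The fully faithful tensor functor $\mathfrak{B}_E \hookrightarrow \mathbf{Rep}_{G_E}(\bbC_p)$ together with the identity $\Gamma(X_{\bbC_p}, \mathcal{G}) = \mathcal{G}_x^{G_E}$ (already used in the proof of Corollary~\ref{corschnitt}) lets me rewrite the hypotheses in purely group-theoretic form: $\dim \mathrm{End}_{G_E}(\mathrm{End}(V))$ equals $2$ or $3$, and $(S^2 V)^{G_E} \neq 0$ respectively $(\Lambda^2 V)^{G_E} \neq 0$, since the Tannakian equivalence commutes with duals, tensor products, symmetric and exterior powers.

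Next I would deduce that $V$ is $G_E$-irreducible. If $V = \bigoplus V_i^{\oplus m_i}$ is the isotypic decomposition, then the trivial representation occurs in $\mathrm{End}(V) = V \otimes V^*$ with multiplicity $\sum_i m_i^2$, and therefore contributes a summand $\bigl(\sum_i m_i^2\bigr)^2$ to $\dim \mathrm{End}_{G_E}(\mathrm{End}(V))$. The hypothesis that this dimension is at most $3$ forces $\sum_i m_i^2 = 1$, so $V$ is irreducible and faithful as a $G_E$-module.

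Finally, having placed $G_E$ as a reductive subgroup of $\mathbf{GL}_V$ acting irreducibly and faithfully, I would invoke Larsen's alternative. In case~(1) the theorem returns either $G_E \supseteq \mathbf{SL}_V$ or $G_E/(G_E \cap \textrm{scalars})$ finite; under the extra hypothesis that $\det(E)$ has finite order, the determinant character of $G_E$ has finite image, forcing $G_E \cap \textrm{scalars}$ to be finite, so either $G_E^0 = \mathbf{SL}_V$ or $G_E$ is finite. In cases~(2) and~(3), the nonzero invariant in $S^2 V$ (respectively $\Lambda^2 V$) is automatically nondegenerate by Schur's lemma applied to the induced $G_E$-equivariant map $V \to V^*$ on the irreducible module $V$, which places $G_E$ inside $\mathbf{O}_V$ (respectively $\mathbf{Sp}_V$); Larsen's alternative in the presence of this form then yields the listed possibilities. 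The only real obstacle is not internal to this argument; it lies in the representation-theoretic classification underpinning Larsen's theorem itself, which we simply cite.
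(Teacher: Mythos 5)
Your proposal is correct and follows the same route as the paper, which likewise deduces the result directly from the Tannakian equivalence $\mathfrak{B}_E \To{\sim} \mathbf{Rep}_{G_E}(K)$ combined with Larsen's moment criterion (\cite{kat1}, Theorem 1.1.6). The only difference is that you spell out the routine verifications (irreducibility of $E_x$ from the bound on $\dim\mathrm{End}_{G_E}(\mathrm{End}(E_x))$, nondegeneracy of the invariant form via Schur's lemma) that the paper leaves implicit in the phrase ``follows immediately.''
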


\begin{proof}
This follows immediately from the equivalence of neutral Tannakian categories $\mathfrak{B}_E \To{\sim} \mathbf{Rep}_{G_E}(K)$ and the moment criterion for representations of algebraic groups by Larsen (\cite{kat1}, Theorem 1.1.6).
\end{proof}

\begin{remark}
If $E$ is not a finite vector bundle of rank $r$, for example a nontrivial vector bundle on the smooth, connected and projective curve $X_{\bbC_p}$ with trivial reduction modulo $p^q$ for some rational $q$ as in Theorem \ref{zusammen}, then its Tannaka dual group $G_E$ is not finite and Proposition \ref{larsen} yields a criterion for $G_E$ being one of the classical groups $\mathbf{S}\mathbf{L}_r$, $\mathbf{S}\mathbf{O}_r$ or $\mathbf{S}\mathbf{p}_r$.    
\end{remark}

\section{The Tannaka dual group of kernel bundles}

In this section $K$ denotes some algebraically closed field of characteristic $0$. By a kernel bundle on the smooth projective variety $X$ with polarization $\scrO(1)$ we mean a vector bundle $E$ on $X$ sitting in a short exact sequence $$0 \longto E \longto \bigoplus_{i=1}^c \scrO(a_i) \longto \bigoplus_{j=1}^d\scrO(b_j) \longto 0$$ with $a_i,b_j \in \bbZ$. If $d = 1$, such a bundle is called a syzygy bundle.

There are several restriction theorems for torsion-free sheaves on projective varieties, for example by Bogomolov, Flenner, or Mehta and Ramanathan. A good reference is the book \cite{huy}, II.7. Recently, A. Langer proved a very strong restriction theorem, which we will apply to certain stable kernel bundles on the projective space. Finally, we will be able to compute the type of the Tannaka dual group for the restriction of some of these bundles to smooth curves of sufficiently high degree. Let $H$ be a very ample divisor corresponding to the line bundle $\scrO(1)$ on the smooth projective variety $X$. Recall the following generalization of semistability for torsion free sheaves $E$ of rank $r$ on $X$: Define $\mu(E) := c_1(E)H^{n-1}/r$ and call $E$ slope $H$-semistable if for all proper subsheaves $F \subset E$ the inequality $\mu(F) \leq \mu(E)$ holds (and stable if it is strictly smaller).
We denote by $R$ the natural number ${r \choose l}{r-2 \choose l-1}$ for $r \geq 2, l = [\frac{r}{2}]$ and by $\Delta(E) := 2rc_2(E)-(r-1)c_1^2(E)$ the discriminant of a vector bundle $E$ with chern classes $c_1(E)$ and $c_2(E)$.

\begin{thm}[Langer, \cite{langer}, Theorem 5.2] \label{langer}
 Let $E$ be a slope $H$-stable torsion free sheaf on a smooth, projective variety $X$ of dimension $n$ over $K$ and let $a$ be an integer such that $$a > \frac{r-1}{r}\Delta(E)H^{n-2}+\frac{1}{r(r-1)H^n}.$$Then for every normal divisor $D \in |aH|$ such that the restriction $E_D$ is torsion free, we have that $E_D$ is also slope $H_D$-stable.
\end{thm}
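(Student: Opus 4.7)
The plan is to argue by contradiction, in the spirit of the Mehta--Ramanathan restriction theorem but with Langer's quantitative refinement. Suppose $E_D$ fails to be slope $H_D$-stable: there is a saturated subsheaf $F \subset E_D$ with $0 < \mathrm{rk}(F) < r$ and $\mu_{H_D}(F) \geq \mu_{H_D}(E_D)$. The first step is to lift $F$ to a subsheaf $G \subset E$ by an elementary transformation, defining $G := \ker(E \twoheadrightarrow E_D \twoheadrightarrow E_D/F)$. This fits in a short exact sequence $0 \to E(-aH) \to G \to F \to 0$, from which the first Chern class of $G$, and hence $\mu_H(G)$, can be computed explicitly in terms of $\mu_{H_D}(F)$ and $a$.

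Since $E$ is $H$-stable on $X$, one has $\mu_H(G) < \mu_H(E)$ strictly. Combining this with the previous computation and the assumed destabilizing inequality $\mu_{H_D}(F) \geq \mu_{H_D}(E_D)$ produces a quantitative lower bound on the slope gap $\mu_H(E) - \mu_H(G)$, linear in $1/a$. The second step is then to bound this gap from above by Langer's sharp form of the Bogomolov inequality, which controls the spread of slopes of any saturated subsheaf by an expression involving the discriminant $\Delta(E) \cdot H^{n-2}$ and the rank $r$. Combining the lower and upper bounds should force $a$ to violate the assumed inequality $a > \tfrac{r-1}{r}\Delta(E)H^{n-2} + \tfrac{1}{r(r-1)H^n}$, yielding the desired contradiction.

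The main obstacle, and the genuinely new content of Langer's theorem beyond the classical Mehta--Ramanathan result, is obtaining the discriminant inequality with precisely the constants $\tfrac{r-1}{r}$ and $\tfrac{1}{r(r-1)H^n}$. To extract these one must pass to the full Harder--Narasimhan filtration of $E_D$, use the Hodge index theorem and a careful analysis of symmetric and exterior powers to control $\sum_i (\mu_i - \mu(E_D))^2$ in terms of the restricted discriminant, and then iterate on the filtration length. Handling filtrations with many pieces of small slope defect is where Langer's refined inductive arguments become essential, and I expect this to be the step where any naive approach falls short of the sharp bound stated in the theorem.
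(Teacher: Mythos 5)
This statement is not proved in the paper at all: it is imported verbatim as Langer's Theorem 5.2 from \cite{langer}, so there is no in-paper argument to compare your proposal against. Judged on its own, your outline does capture the correct skeleton of Langer's actual proof: one assumes $E_D$ is destabilized, performs the elementary transformation $G = \ker(E \to E_D/F)$ (note $G$ has the \emph{same} rank $r$ as $E$, since $E/G$ is supported on $D$), computes the Chern classes and the discriminant of $G$, and plays this off against a Bogomolov-type inequality. That much is right and is the standard architecture going back to Bogomolov and Flenner.

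However, as written the proposal has a genuine gap in its middle step and defers the entire quantitative content. First, stability of $E$ does not give you any ``quantitative lower bound on the slope gap $\mu_H(E)-\mu_H(G)$'': since $G$ has full rank, stability says nothing about $\mu_H(G)$ directly, and for proper subsheaves stability only yields a strict inequality with no a priori gap. The gap of size $\tfrac{1}{r(r-1)H^n}$ in the hypothesis comes from the \emph{integrality} of the intersection numbers $c_1(F')\cdot H^{n-1}$ for subsheaves $F'\subset E$, which forces $\mu_{\max}(G) \le \mu_H(E) - \tfrac{1}{r(r-1)H^n}$; you never invoke integrality, so that term cannot appear in your argument. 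Second, the actual engine of the proof is Langer's refined Bogomolov inequality (his Theorem 3.4), which bounds $\Delta(G)H^{n-2}$ from below by $-r^2(\mu_{\max}(G)-\mu(G))(\mu(G)-\mu_{\min}(G))/H^n$; combined with the computation of $\Delta(G)$ and the bounds $\mu_{\min}(G)\ge \mu(E)-aH^n$ (from $E(-aH)\subset G$) and the integrality bound on $\mu_{\max}(G)$, this is what produces exactly the constants $\tfrac{r-1}{r}$ and $\tfrac{1}{r(r-1)H^n}$. Your third paragraph correctly identifies that this inequality is the hard part, but then explicitly leaves it open. Since the precise constants are the entire point of the theorem (they are what the paper uses to get effective degree bounds in Example \ref{beispi} and Theorem \ref{theo1}), the proposal is a plausible roadmap rather than a proof.
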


\begin{remark} \label{chern}
Since we will consider kernel bundles on the projective space, their chern classes can be easily computed. If $E$ is a kernel bundle on $\bbP^n_K$ sitting in the short exact sequence $0 \to E \to \bigoplus_{i=1}^c \scrO(a_i) \to \bigoplus_{j=1}^d\scrO(b_j) \to 0$, we have for the total chern class $c(\bigoplus_{i=1}^c \scrO(a_i)) = c(E)c(\bigoplus_{j=1}^d\scrO(b_j))$, hence $$c(E) = \frac{c(\bigoplus_{i=1}^c \scrO(a_i))}{c(\bigoplus_{j=1}^d\scrO(b_j))} = \prod_{i = 1}^{c}(1+a_ih)/\prod_{j=1}^{d}(1+b_jh),$$ where the Chow ring is $A(\bbP^n_K) \iso \bbZ[h]/h^{n+1}$ with $h$ the class of a hyperplane. The coefficient of $h^i$ is the ith chern class with respect to this identification.

\end{remark}

\begin{thm}[Bohnhorst-Spindler, \cite{boh}, Theorem 2.7.] \label{bohn}
Let $E$ be a vector bundle of rank $r$ on the projective space $\bbP^r_K$ with a resolution $$0 \longto \bigoplus_{i = 1}^c \scrO(a_i) \longto \bigoplus_{j = 1}^{c+r}\scrO(b_j) \longto E \longto 0$$ where $a_1\geq a_2\geq\dots\geq a_c$, $b_1\geq b_2\dots\geq b_{c+r}$ and $a_i < b_{c+i}$ for $i = 1,\dots,r$. Equivalent are: \begin{enumerate} \item 
The bundle $E$ is stable. \item The inequality $b_1 < \mu(E) = \frac{1}{r}(\sum_{j = 1}^{c+r}b_j - \sum_{i = 1}^ca_i)$ holds. \end{enumerate}
\end{thm}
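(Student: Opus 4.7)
The plan is to prove the two implications separately, exploiting the explicit resolution together with the numerical hypotheses $a_i < b_{c+i}$. The direction (1)$\Rightarrow$(2) should be the easier one: the idea is to exhibit $\scrO(b_1)$ as a line subbundle (up to saturation) of $E$ and invoke stability.

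For (1)$\Rightarrow$(2), first observe that the hypothesis $a_1 < b_{c+1}$ combined with the monotonicities $a_1 \geq \cdots \geq a_c$ and $b_1 \geq b_{c+1}$ yields $a_i \leq a_1 < b_1$ for every $i$. Hence $\hom(\scrO(b_1), \scrO(a_i)) = 0$ for all $i$, so the composition $\scrO(b_1) \hookrightarrow \bigoplus_j \scrO(b_j) \twoheadrightarrow E$ cannot factor through $\bigoplus_i \scrO(a_i)$, and is therefore a nonzero (generically injective) map. Its saturation in $E$ is a line subbundle $L \subseteq E$ with $\deg L \geq b_1$, and stability of $E$ forces $b_1 \leq \deg L < \mu(E)$.

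For (2)$\Rightarrow$(1), take an arbitrary saturated subsheaf $F \subsetneq E$ of rank $s$ with $0 < s < r$, and form the preimage $\tilde F := \pi^{-1}(F) \subseteq V := \bigoplus_j \scrO(b_j)$. The induced short exact sequence $0 \to \bigoplus_i \scrO(a_i) \to \tilde F \to F \to 0$ gives $\mathrm{rk}\,\tilde F = s+c$ and $c_1(\tilde F) = c_1(F) + \sum_i a_i$. I would then pass to the determinant inclusion
\[
\det \tilde F \hookrightarrow \wedge^{s+c} V \;=\; \bigoplus_{|J|=s+c} \scrO\Bigl(\sum_{j \in J} b_j\Bigr),
\]
extracting some index set $J$ with $c_1(\tilde F) \leq \sum_{j \in J} b_j$. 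The ordering hypotheses $a_i < b_{c+i}$ enter through a restriction on admissibility of $J$: for the projection $\tilde F \to \bigoplus_{j \in J} \scrO(b_j)$ to be generically an isomorphism, the composition $\alpha \colon \bigoplus_i \scrO(a_i) \to \bigoplus_{j \in J}\scrO(b_j)$ must remain generically injective, and the pattern of degrees $b_j - a_i \geq 0$ dictated by $a_i < b_{c+i}$ constrains the combinatorics of which $J$ this is possible for.

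The main obstacle will be converting this combinatorial restriction into a numerically sharp bound on $c_1(\tilde F)$. The crude bound $c_1(\tilde F) \leq \sum_{j=1}^{s+c} b_j$ is too weak on its own: plugged into $\mu(F) < \mu(E)$ it demands $(r-s)\mu(E) < \sum_{j=s+c+1}^{c+r} b_j$, which does not follow from the assumption $b_1 < \mu(E)$. The substantive content is thus a Plücker/matroid-type argument showing that any admissible $J$ must have enough overlap with the low-index (large-degree) block of $b$'s imposed by $\alpha$, so that the resulting estimate simplifies, via the condition $a_i < b_{c+i}$, to $\mu(F) \leq b_1$, which combined with $b_1 < \mu(E)$ finishes the stability argument.
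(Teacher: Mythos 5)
The paper offers no proof of this statement---it is quoted verbatim from Bohnhorst--Spindler \cite{boh} as Theorem 2.7---so there is nothing internal to compare against; I evaluate your argument on its own. Your direction (1)$\Rightarrow$(2) is correct and complete: $a_i \leq a_1 < b_{c+1} \leq b_1$ kills $\hom(\scrO(b_1),\scrO(a_i))$, so the composite $\scrO(b_1) \to E$ is nonzero, hence injective into the torsion-free sheaf $E$, and stability applied to this rank-one subsheaf gives $b_1 < \mu(E)$ (the saturation step is not even needed).

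The direction (2)$\Rightarrow$(1) has a genuine gap, and moreover the mechanism you propose cannot close it. You correctly identify that the crude bound $c_1(\tilde F) \leq \sum_{j=1}^{s+c} b_j$ is too weak and that the target is $\mu(F) \leq b_1$, but the ``Pl\"ucker/matroid'' step is only described, not proved---and it fails on the simplest instance of the theorem. Take the Euler sequence $0 \to \scrO \to \scrO(1)^{\oplus 3} \to T_{\bbP^2} \to 0$, so $c=1$, $a_1=0$, $b_1=b_2=b_3=1$, and hypothesis (2) holds since $b_1 = 1 < 3/2 = \mu$. For a rank-one subsheaf $F$, the preimage $\tilde F$ has rank $2$, \emph{every} two-element $J$ is admissible in your sense (each projection $\scrO \xrightarrow{(x_i,x_j)} \scrO(1)^{\oplus 2}$ is generically injective), and each gives only $c_1(F) \leq \sum_{j\in J} b_j - a_1 = 2$, whereas stability requires $c_1(F) \leq 1$. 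There are two structural reasons for this failure: the inequality $c_1(\tilde F) \leq \sum_{j\in J} b_j$ extracted from a nonzero determinant projection is generally far from sharp, and the degree-pattern constraint on admissible $J$ (a Hall-type matching forcing $b_{j(i)} \geq a_i$ for distinct $j(i) \in J$) yields \emph{lower} bounds on partial sums of the $b_j$ over $J$, whereas reaching $\sum_{j\in J} b_j \leq s b_1 + \sum_i a_i$ would require an \emph{upper} bound on a size-$c$ sub-block. A workable proof needs cohomological input beyond Chern-class bookkeeping of projections: for instance, one establishes $H^0\bigl(\bigwedge^s E(-d)\bigr) = 0$ for $d > s b_1$ by filtering $\bigwedge^s$ of the resolution and inductively controlling the relevant $H^0$ and $H^1$ groups of twists of line bundles and lower exterior powers (a Hoppe-type criterion), which is essentially the route taken in \cite{boh}.
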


\begin{lemma} \label{nkleinerr}
For any stable vector bundle $E$ of rank $r$ and degree $0$ on the projective space $\bbP^r_{K}$ with a resolution as in Theorem \ref{bohn} we have $$H^k(\bbP^r_{K}, E^{\tensor n}(m)) = 0 \mbox{ for }m \leq 0, 0 < n \leq r-1, k < r-n.$$ In particular, $\Gamma(\bbP^r_{K}, E^{\tensor n}) = 0$ for all $0< n \leq r-1$.
\end{lemma}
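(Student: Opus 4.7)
The plan is to induct on $n$, with the inductive step proved by tensoring the defining resolution of $E$ with $E^{\tensor(n-1)}(m)$. A preliminary observation, used at every step, is that all integers $a_i$ and $b_j$ appearing in the resolution are strictly negative. For the $b_j$, Theorem \ref{bohn} together with $\mu(E) = 0$ forces $b_1 < 0$, hence $b_j \leq b_1 < 0$ throughout. For the $a_i$, the injectivity of the sheaf map $\bigoplus_i \scrO(a_i) \into \bigoplus_j \scrO(b_j)$ forces each summand $\scrO(a_i)$ to admit a nonzero morphism to some $\scrO(b_j)$; since $\hom(\scrO(a),\scrO(b)) \neq 0$ only when $a \leq b$, this yields $a_i \leq b_1 < 0$.

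For the base case $n = 1$, I twist the defining resolution by $\scrO(m)$ and read off the long exact cohomology sequence. In the range $0 \leq k < r-1$, both flanking groups $H^k(\bbP^r_K, \bigoplus_j \scrO(b_j + m))$ and $H^{k+1}(\bbP^r_K, \bigoplus_i \scrO(a_i + m))$ vanish: for $k \geq 1$ this is the classical line-bundle cohomology on $\bbP^r_K$, while for $k = 0$ it uses $b_j + m \leq -1$ together with $0 < 1 < r$. Hence $H^k(\bbP^r_K, E(m)) = 0$ for $m \leq 0$ and $k < r-1$.

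For the inductive step at $n \geq 2$, I tensor the defining resolution (which remains exact because $E$ is locally free) by $E^{\tensor(n-1)}(m)$ to obtain
$$0 \longto \bigoplus_i E^{\tensor(n-1)}(m + a_i) \longto \bigoplus_j E^{\tensor(n-1)}(m + b_j) \longto E^{\tensor n}(m) \longto 0.$$
Since all twists $m + a_i$ and $m + b_j$ remain strictly negative, the inductive hypothesis at step $n-1$, which provides vanishing in degrees $k' < r - (n-1) = r - n + 1$, kills both $H^k(\bbP^r_K, E^{\tensor(n-1)}(m + b_j))$ and $H^{k+1}(\bbP^r_K, E^{\tensor(n-1)}(m + a_i))$ for $k < r - n$. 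The long exact sequence then forces $H^k(\bbP^r_K, E^{\tensor n}(m)) = 0$. The final assertion of the lemma is the special case $m = 0$, $k = 0$. I expect the only genuinely nonroutine ingredient to be the use of Bohnhorst-Spindler (combined with injectivity of the resolution) to pin down $a_i, b_j < 0$; once that is secured, the induction reduces entirely to bookkeeping on the two long exact sequences.
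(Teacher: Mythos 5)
Your proof is correct and follows essentially the same route as the paper: both establish $a_i, b_j < 0$ from stability (the paper reads $a_i<b_{c+i}\leq b_1<0$ directly off the hypotheses of Theorem \ref{bohn}, while you deduce it from injectivity via $\hom(\scrO(a),\scrO(b))$, a cosmetic difference), then tensor the resolution with $E^{\tensor(n-1)}(m)$ and run the induction through the long exact cohomology sequence exactly as in the paper.
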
    

\begin{proof}
One just has to compute the cohomology of the sheaves $\scrO(m)$. We have $H^k(\bbP^r_{K}, \scrO(m)) = 0$ for all $0 < k < r$ and all $m$, and $H^0(\bbP^r_{K}, \scrO(m)) = 0$ for $m<0$ (\cite{har}, Theorem 5.1). The tensor product of a short exact sequence of vector bundles with a vector bundle is still a short exact sequence, hence $$0 \longto \bigoplus_{i = 1}^cE^{\tensor n-1}(m+a_i) \longto \bigoplus_{j = 1}^{c+r}E^{\tensor n-1}(m+b_j) \longto E^{\tensor n}(m) \longto 0$$ is exact. Now consider the corresponding long exact sequence $$\xymatrix{ \dots\ar[r] & \bigoplus\limits_{j = 1}^{c+r}H^k(\bbP^r_{K}, E^{\tensor n-1}(m+b_j)) \ar[r] & H^k(\bbP^r_{K}, E^{\tensor n}(m))  \ar[r] & }$$ $$\xymatrix{ \ar[r] & \bigoplus\limits_{i=1}^cH^{k+1}(\bbP^r_{K}, E^{\tensor n-1}(m+a_i)) \ar[r] & \dots &  }$$ Since the stability of the vector bundle $E$ implies $b_1<0$, it follows that all $a_i,b_j<0$. So the claim is true for $n=1$, because the $k$th cohomology of the sheaves $\scrO(m+a_i)$ and $\scrO(m+b_j)$ vanishes for all $k \leq r-1$. The result follows then inductively with the same long exact sequence.
\end{proof}

Recall that $\frako$ denotes the valuation ring of $\bbC_p$. We now want to consider coherent sheaves $\mathcal{E}$ on $\bbP^r_{\frako}$ defined as the kernel of the morphism $\bigoplus_{i=1}^c \scrO(a_i) \to \bigoplus_{j=1}^d\scrO(b_j)$, such that $\mathcal{E}$ is a stable kernel bundle of degree $0$ and rank $r$ on the generic fibre. Then we know that the restriction of $\mathcal{E}$ to a model $\frakX_{\frako} \subset \bbP^r_{\frako}$ of a smooth curve over $\overline{\bbQ}_p$ of sufficiently high degree is again stable on the generic fibre and that $\mathcal{E}^{\tensor n}$ does not have nontrivial global sections for all $0 < n < r$. If furthermore the reduction mod $p^q$ of this bundle on $\frakX_{\frako}$ with $q$ as in Theorem \ref{zusammen} is trivial, then the Tannaka dual group of the bundle restricted to $X_{\bbC_p}$ is connected and semisimple. We will show that all components of this group have to be of type $A$.

\begin{lemma} \label{lokfrei}
Let $\frakX$ be a reduced scheme of finite type over $\mbox{spec}(\frako_K)$, where $K \supseteq \bbQ_p$ is a finite field extension with residue field $\mathbb{F}_q$. 
A coherent sheaf $\mathcal{F}$ on $\frakX$ is locally free of rank $r$ if and only if its restriction to the generic fibre $X_K$ and to the special fibre $X_{\mathbb{F}_q}$ is locally free of rank $r$.
\end{lemma}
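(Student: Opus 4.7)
The only-if direction is immediate, since restriction of a locally free sheaf of rank $r$ to any subscheme is locally free of rank $r$. For the converse, my plan is to reduce the claim to the classical fact that a coherent sheaf of locally constant rank on a reduced Noetherian scheme is locally free of that rank, and then prove the classical fact by the standard associated-points argument.

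First I would verify that the rank function $\varphi\colon x \mapsto \dim_{k(x)} \mathcal{F}_x \otimes_{\mathcal{O}_{\frakX,x}} k(x)$ is constantly equal to $r$ on $\frakX$. Since $\spec(\frako_K)$ consists of just two points, every $x \in \frakX$ lies in either the generic fibre $X_K$ or the special fibre $X_{\mathbb{F}_q}$. For $x \in X_K$ a uniformizer $\pi$ is invertible in $\mathcal{O}_{\frakX,x}$, so this local ring agrees with the one of $X_K$ at $x$ and $\varphi(x)$ coincides with the corresponding fibre dimension of $\mathcal{F}|_{X_K}$; for $x \in X_{\mathbb{F}_q}$, tensoring $\mathcal{F}_x$ down to $k(x)$ factors through reduction modulo $\pi$, so $\varphi(x)$ coincides with the fibre dimension of $\mathcal{F}|_{X_{\mathbb{F}_q}}$ at $x$. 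In both cases the hypothesis gives $\varphi(x)=r$.

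The main step is then local. Around a given $x \in \frakX$, by Nakayama's lemma I would pick $r$ sections of $\mathcal{F}$ whose images in $\mathcal{F}(x)$ form a basis; these furnish a surjection $\mathcal{O}_U^r \to \mathcal{F}|_U$ on some open neighborhood $U$ of $x$, with coherent kernel $\mathcal{K}$. At any generic point $\eta$ of $\frakX$ the local ring $\mathcal{O}_{\frakX,\eta}$ is a field (since $\frakX$ is reduced), so the surjection $\mathcal{O}_{\frakX,\eta}^r \to \mathcal{F}_\eta$ is a surjection of two $r$-dimensional vector spaces over that field, hence an isomorphism; in particular $\mathcal{K}_\eta = 0$. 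Since $\mathcal{K}$ embeds in $\mathcal{O}_U^r$ and on a reduced Noetherian scheme $\Ass(\mathcal{O})$ is exactly the set of generic points of the irreducible components, one has $\Ass(\mathcal{K}) \subseteq \Ass(\mathcal{O}_U^r) = \{\text{generic points}\}$, and all these stalks vanish, forcing $\Ass(\mathcal{K})=\emptyset$ and thus $\mathcal{K}=0$. Hence $\mathcal{F}|_U \cong \mathcal{O}_U^r$. There is no real obstacle here; the argument just assembles the upper semicontinuity of the fibre-rank, the behaviour of the fibre under a DVR base change, and the standard description of associated points on a reduced scheme.
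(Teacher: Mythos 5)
Your proof is correct and follows essentially the same route as the paper: both arguments reduce the claim to the constancy of the fibre-rank function $x \mapsto \dim_{k(x)}(\mathcal{F}_x \otimes k(x))$ via the set-theoretic decomposition of $\frakX$ into its generic and special fibres, and then invoke the fact that on a reduced Noetherian scheme a coherent sheaf of locally constant fibre-rank is locally free. The only difference is that the paper cites this last fact (Hartshorne, Ex.~II.5.8), whereas you supply the standard Nakayama/associated-points proof of it, which is also correct.
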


\begin{proof}
Since the valuation ring $\frako_K$ contains only two prime ideals, the maximal ideal $\mathfrak{m}$ and the trivial ideal $(0)$,  
 the open embedding $X_{K} \into \frakX$ and the closed embedding $X_{\mathbb{F}_q} \into \frakX$ are a disjoint covering of the scheme $\frakX$. Due to \cite{har}, Ex. 5.8, the coherent sheaf $\mathcal{F}$ is locally free of rank $r$ if and only if the map $\phi(x) := \mbox{dim}_{k(x)}(\mathcal{F}_x \tensor k(x))$ is constant on all connected components of $\frakX$. Since the value of $\phi$ does not change after restriction to the fibres, the result follows.
\end{proof}

Next we will illustrate the goal of this section by the following example of a stable syzygy bundle on the projective plane. It was communicated to us by H. Brenner. 
 
\begin{example} \label{beispi}
Consider the syzygy sheaf $\calE := \mbox{Syz}(X^2,Y^2,pZ^2+XY)(3)$ on $\bbP^2_{\overline{\bbZ}_p}$. The generic fibre $E := \calE \tensor \overline{\bbQ}_p$ is locally free on the projective plane $\bbP^2_{\overline{\bbQ}_p}$ since the polynomials do not have common zeros, see \cite{bre}, Lemma 2.1. It is sitting in the short exact sequence $$ 0 \longto E \longto \bigoplus_{i = 1}^3 \scrO(1) \longto \scrO(3) \longto 0$$ and obviously has degree $0$ and rank $r$. Furthermore it is stable, because applying the left-exact functor $\Gamma(\bbP^2, \cdot)$ one easily sees that $E$ does not have global sections (see \cite{oko}, Lemma 1.2.5). It follows that $E$, after restriction to a smooth curve $X \subset \bbP^2_{\overline{\bbQ}_p}$ of sufficiently high degree $d$ is again stable by Theorem \ref{langer}.
Computing a lower bound for $d$ using the remark above, we see that $d > 7$ suffices.
The special fibre $\calE \tensor \overline{\bbF}_p$ is the syzygy sheaf $\mbox{Syz}(X^2,Y^2,XY)(3)$ on the projective plane $\bbP^2_{\overline{\bbF}_p}$. These monomials have no common zero on the open subset $\bbP^2_{\overline{\bbF}_p} - \{(0;0;1)\}$, so again this sheaf is locally free there and possesses the linearly independent global sections $(-Y,0,X)$ and $(0,X,-Y)$, hence it is trivial. For any model $\mathfrak{X}  \subset \bbP^2_{\overline{\bbZ}_p}$ of a smooth projective curve $X$ of degree $d > 7$, such that the special fibre $\frakX \tensor \overline{\mathbb{F}}_p$ does not contain the point $[0;0;1]$, Lemma \ref{lokfrei} implies that $\calE|_{\frakX}$ is locally free. Hence $\calE|_{\frakX}$ is a model of $E|_{X}$ with trivial special fibre $\calE|_{\frakX} \tensor \overline{\bbF}_p$. It is obvious that $\calE|_{\frakX}$ is even trivial modulo $p$. In particular, Theorem \ref{zusammen} yields that $E|_{X_{\bbC_p}}$ lies in $\mathfrak{B}^s_{X_{\bbC_p}}$ with connected Tannaka dual group $G_{E|_{X_{\bbC_p}}}$. Since $\mbox{det}(E) = \scrO$, the algebraic group $G_{E|_{X_{\bbC_p}}}$ is semisimple due to Proposition \ref{halbeinfach}. But the only faithful and irreducible two-dimensional representation of a semisimple group is the representation $G_{E|_{X_{\bbC_p}}}=\mathbf{SL}_{E_x} \subset \mathbf{GL}_{E_x}$.  
\end{example}

We want to generalize this example and prove the following theorem.

\begin{thm} \label{theo1}
Let $E$ be a stable vector bundle of degree $0$ and rank $r$ on the projective space $\bbP^r_{\bbC_p}$ having a resolution as in Theorem \ref{bohn}. Let $X \subset \bbP^r_{\overline{\bbQ}_p}$ be a smooth, connected and projective curve of degree $d>>0$. If the bundle $E$, restricted to $X_{\bbC_p}$, is trivial modulo $p^q$ for some $q$ as in Theorem \ref{zusammen}, we have:

\begin{enumerate}
\item The vector bundle $E$ restricted to the curve $X_{\bbC_p}$ is stable and lies in the category $\mathfrak{B}^s_{X_{\bbC_p}}$.
\item The Tannaka dual group $G_E \subset \mathbf{GL}_{E_x}$ of $E$ restricted to the curve $X_{\bbC_p}$ is connected and semisimple. All components of $G_E$ are of type $A$.

\item If $\dim(E_x^{\tensor r})^{G_E} = \dim\Gamma(X_{\bbC_p}, E^{\tensor r}) \leq 3$ or if $r$ is some prime-power, then $G_E$  is almost simple of type $A$. 
\end{enumerate}
\end{thm}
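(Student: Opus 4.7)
The plan is to establish the three assertions in order, building on the results already developed. For (1), Langer's restriction theorem (Theorem \ref{langer}) guarantees stability of $E|_{X_{\bbC_p}}$ once $d$ exceeds an explicit bound depending on the Chern classes of $E$, which can be read off from the resolution via Remark \ref{chern}. The hypothesis that $E|_{X_{\bbC_p}}$ is trivial modulo $p^q$ then places it in $\mathfrak{B}^s_{X_{\bbC_p}}$ by Proposition \ref{prop2}. For the connectedness half of (2) I would invoke Theorem \ref{zusammen} directly, which gives connectedness of both $G_E$ and $\overline{G}_{\rho_E}$. For semisimplicity I would use that $\det E \iso \scrO$ on $\bbP^r_{\bbC_p}$ (since $\deg E = 0$ forces $\sum_j b_j = \sum_i a_i$), so $\det(E|_{X_{\bbC_p}})$ is of finite order; connectedness of $\overline{G}_{\rho_E}$ together with Proposition \ref{uber} then forces the associated \'etale covering $Y \to X$ to be trivial, identifying $E_0$ with the stable bundle $E|_{X_{\bbC_p}}$ itself. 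Proposition \ref{halbeinfach} delivers semisimplicity of $G_E$ and irreducibility of $E_x$ as a $G_E$-module.

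The core content of (2) is the type-$A$ assertion, which I plan to deduce from Proposition \ref{invar} applied to $\frakg = \mathrm{Lie}(G_E)$ acting on the irreducible faithful module $V = E_x$. Its hypothesis demands $(V^{\tensor n})^{\frakg} = 0$ for $1 \leq n < r$; connectedness of $G_E$ converts this to $(V^{\tensor n})^{G_E} = 0$, which by Corollary \ref{corschnitt} is the same as $\Gamma(X_{\bbC_p}, (E|_{X_{\bbC_p}})^{\tensor n}) = 0$. The analogous statement on $\bbP^r_{\bbC_p}$ already holds by Lemma \ref{nkleinerr}. To transport it to $X_{\bbC_p}$ I would use the long exact sequence arising from
\[
0 \longto \scrI_X \longto \scrO_{\bbP^r_{\bbC_p}} \longto \scrO_{X_{\bbC_p}} \longto 0
\]
tensored with $E^{\tensor n}$, reducing the problem to the vanishing of $H^1(\bbP^r_{\bbC_p}, E^{\tensor n} \tensor \scrI_X)$. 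This I would attempt to establish for $d \gg 0$ by means of a Koszul-type resolution of $\scrI_X$ (passing, if necessary, to the complete-intersection case), combined with the higher-cohomology vanishing in Lemma \ref{nkleinerr} for negative twists. I expect this step, in particular the behaviour near $n = r-1$ where the vanishing range in Lemma \ref{nkleinerr} narrows to a single degree, to be the main technical obstacle.

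For (3), with $G_E$ already shown to be connected semisimple with every component of type $A$, I would write it as an almost direct product $G_1 \cdots G_k$ of almost simple type-$A$ factors; irreducibility of $V = E_x$ then forces a tensor decomposition $V \iso V_1 \tensor \cdots \tensor V_k$ with each $V_i$ a nontrivial irreducible $G_i$-module and $r = \prod_i \dim V_i$. Under the hypothesis $\dim \Gamma(X_{\bbC_p}, E^{\tensor r}) \leq 3$, the factorization $(V^{\tensor r})^{G_E} = \bigotimes_{i=1}^k (V_i^{\tensor r})^{G_i}$ combined with explicit Schur--Weyl computations of each $\dim(V_i^{\tensor r})^{G_i}$ (in particular the contribution of the rectangular partitions forced by the central characters) should force $k = 1$. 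Under the alternative hypothesis that $r$ is a prime power, each $\dim V_i$ must be a power of the same prime; a short case analysis based on the classification of low-dimensional irreducible representations of type-$A$ groups, together with the constraint $r = \prod_i \dim V_i$ and the nontriviality of each $V_i$, should likewise rule out $k \geq 2$ and yield almost simplicity of $G_E$.
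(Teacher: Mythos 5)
Your proposal is correct in outline, and for parts (1) and (2) it follows the paper's own route almost verbatim: Langer's restriction theorem together with Theorem \ref{zusammen} and Proposition \ref{halbeinfach} give stability, membership in $\mathfrak{B}^s_{X_{\bbC_p}}$, connectedness and semisimplicity, and the type-$A$ statement is obtained by feeding the vanishing $(E_x^{\tensor n})^{G_E}=0$ for $1\le n<r$ into Proposition \ref{invar}. The one place you are more careful than the paper is the comparison $\Gamma(\bbP^r_{\bbC_p},E^{\tensor n})=\Gamma(X_{\bbC_p},E|_{X_{\bbC_p}}^{\tensor n})$: the paper simply asserts this for $d$ large enough, whereas you propose the ideal-sheaf/Koszul argument; note that this (like the application of Langer's theorem itself once $r\ge 3$) implicitly requires $X$ to be obtained by successive high-degree hypersurface sections, and that for the terms of the Koszul resolution with $j\ge r-n$ the needed vanishing of $H^{j}(\bbP^r_{\bbC_p},E^{\tensor n}(-m))$ comes from Serre vanishing and duality for $m\gg 0$ rather than from Lemma \ref{nkleinerr}, so the case $n=r-1$ is not actually an obstacle. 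Where you genuinely diverge is part (3), and there the paper's argument is both more elementary and more robust than your sketch. Writing $\frakg_E=\frakg_1\oplus\frakg_2$ and $V=V_1\tensor V_2$ with $r_i=\dim V_i\ge 2$, the paper notes that $\bigwedge^{r_1-1}V_1\iso V_1^*$ (the determinant being trivial) makes $V_1^*$ a summand of $V_1^{\tensor (r_1-1)}$, hence $\Endo(V_1^{\tensor r_2})=(V_1^*\tensor V_1)^{\tensor r_2}$ a summand of $V_1^{\tensor r}$; since $V_1^{\tensor r_2}$ is reducible, each factor contributes at least $2$ to the invariants and $\dim(V^{\tensor r})^{\frakg_E}\ge 4>3$. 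Your ``rectangular partition'' count via Schur--Weyl presupposes that the $V_i$ are standard representations, which is not known at that stage. Likewise, for $r$ a prime power the paper uses only divisibility: without loss of generality $r_1\mid r_2$, so $(V^{\tensor r_2})^{\frakg_E}\supseteq (V_1^{\tensor r_2})^{\frakg_1}\tensor(V_2^{\tensor r_2})^{\frakg_2}\neq 0$ with $r_2<r$, contradicting the vanishing already established; no classification of low-dimensional type-$A$ representations is needed, and it is not clear that the case analysis you propose would close up on its own.
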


\begin{remark}
There are even lots of syzygy bundles that fulfill all conditions of the theorem. 
Consider a bundle $E$ sitting in the short exact sequence $$0 \longto E \longto \bigoplus_{i = 1}^{r+1}\scrO(1) \longto \scrO(r+1) \longto 0,$$ where the right morphism is defined by homogeneous polynomials $$f_i := X_0^{i-1}X_1^{r-i+1} + pg_i,\; i = 1,\dots, r+1,$$ with $g_i \in \frako[X_0,\dots,X_r]$ having no common zeros. Then for every smooth projective curve $X$ of sufficiently high degree, with a model $\frakX \subset \bbP^r_{\frako}$ such that its special fibre does not intersect the subspace $[0;0;X_2;\dots;X_r]$, one easily sees that the bundle $\calE$ modulo $p$ has $r$ linearly independent global sections and hence is trivial.
\end{remark}

\begin{proof}
The restriction of the vector bundle $E$ to $X_{\bbC_p}$ is stable and has a connected, semisimple Tannaka dual group $G_E$ due to Theorem \ref{zusammen}, Proposition \ref{halbeinfach} and the restriction theorem of Langer. Furthermore, if the degree of the curve $X_{\bbC_p}$ is high enough, we may even assume that $\Gamma(\bbP^r_{\bbC_p}, E^{\tensor n}) = \Gamma(X_{\bbC_p}, E|_{X_{\bbC_p}}^{\tensor n})$ for all $n<r$. Now we prove that $G_E$ is almost simple under the conditions of the third assertion. Denote by $\frakg_E$ the Lie algebra of $G_E$ and suppose $\frakg_E = \frakg_1 \oplus \frakg_2$ where $\frakg_1$ and $\frakg_2$ are semisimple Lie algebras. Define $V := E_x$. It follows from \cite{bour3}, page 234, Ex.18f that there are irreducible  $\frakg_i$-modules $V_i$ with $V = V_1 \tensor V_2$ as $\frakg_E$-modules. Write $r := \mbox{dim}(V)$ and $r_i := \mbox{dim}(V_i)\geq2$ and notice $V^{\tensor r} = V_1^{\tensor r_1r_2} \tensor V_2^{\tensor r_1r_2}.$ Now we can decompose $V_1^{\tensor r_1-1} = V_1^* \oplus W_1$ due to the Lemma of Schur, where $W_1$ is some semisimple $\frakg_1$-module $W_1$. Hence we have that $(V_1^* \tensor V_1)^{r_2} = \mbox{End}(V_1^{\tensor r_2})$ is a direct summand of $V_1^{\tensor r_1r_2}$. But then the $\frakg$-module $V_1^{\tensor r_2}$ is not irreducible, see again \cite{bour3}, page 234, Ex.18f. It follows that $\mbox{dim}(\mbox{End}(V_1^{\tensor r_2}))^{\frakg_E} \geq 2$. The same holds for the $\frakg$-module $V_2^{\tensor r_1}$, which shows $\dim((V^{\tensor r})^{\frakg_E}) \geq 4$. If this condition is not fulfilled, $\frakg_E$ has to be simple. Further, the Lie algebra $\frakg_E$ is also simple if $r = l^n$ for some prime $l$, because then, without loss of generality, $r_1$ divides $r_2$ and $(V^{\tensor r_2})^{\frakg_E} = (V_1^{\tensor r_2} \tensor V_2^{\tensor r_2})^{\frakg_E} \neq 0$. But since  $r_2$ is strictly smaller than $r$, this is a contradiction to Lemma \ref{nkleinerr}. In the next section we will deal with the last claim about the type of the components of the semisimple group $G_E$. 

\section{Invariants of the $n$-fold tensor product of $\frakg$-modules}
We start with describing the setting. For references concerning the representation theory of semisimple Lie algebras see for example \cite{bour3} or \cite{hum1}. Denote by $\frakg$ a finite-dimensional, semisimple Lie algebra over an algebraically closed field $K$ of characteristic $0$. Let $\mathfrak{h} \subseteq \frakg$ be a Cartan subalgebra. We then have $$\frakg =  \bigoplus_{\alpha \in \mathfrak{h}^*}\frakg_{\alpha}$$ with $\frakg_{\alpha} := \{x \in \frakg \; ; \; [hx]= \alpha(h)x \mbox{ for all } h \in \mathfrak{h}\}.$ The roots of  $\frakg$ (with respect to $\mathfrak{h}$) are the elements of the finite set $$\Phi := \{\alpha \in \mathfrak{h}^* \; ; \; \alpha \neq 0 \mbox{ and } \frakg_{\alpha} \neq 0\}.$$ Let $E_{\bbQ} \subseteq \mathfrak{h}^*$ be the vector space over $\bbQ$ generated by $\Phi$. We have  $\mbox{dim}_{\bbQ}E_{\bbQ} = \mbox{dim}_k\mathfrak{h}^*$. The vector space $E := E_{\bbQ} \tensor \bbR$ inherits a scalar product $(.\,,.)$ from $\frakg$, namely the dual of the Killing form. The set $\Phi$ is an abstract root system in the vectorspace $E$ in the sense of \cite{hum1}, Chapter III. There is a one-to-one correspondence between isomorphism-classes of semisimple Lie algebras of rank $r$ and isomorphism-classes of root systems in an $r$-dimensional $\bbR$-vector space, where the semisimple Lie algebra $\frakg = \frakg_1 \oplus \dots \oplus \frakg_n$ with simple components $\frakg_i$ corresponds to the root system $\Phi = \Phi_1 \cup \dots \cup \Phi_n$ with irreducible systems $\Phi_i$. The latter belong to the classical root systems $A_l,  l \geq 1, \; B_l,  l \geq 2, \; C_l,  l \geq 3, \; D_l, \l \geq 4$ or to the exceptional cases $E_6,E_7,E_8,F_4,G_2$. Now let $\Delta = (\alpha_1,\dots,\alpha_l)$ be a basis of the root system $\Phi$, i.e. $\Delta$ is a basis of the vector space $E$ and every $\beta \in \Phi$ is a linear combination of the $\alpha_i$ with all coefficients either nonnegative or nonpositive. If we define $\langle \lambda , \alpha\rangle \defeq \frac{2(\lambda,\alpha)}{(\alpha,\alpha)}$ for $\lambda,\alpha \in E, \alpha \neq 0$, the weight lattice of the root system $\Phi$ is the set $$\Lambda:= \{\lambda \in E \; ; \; \langle \lambda , \alpha\rangle \in \bbZ \mbox{ for all } \alpha \in \Phi\}.$$ We denote by $\Lambda_r := \bbZ\Phi$ the lattice spanned by the roots. Then $\Lambda_r \subseteq \Lambda \subseteq E$, with finite quotient $\Lambda/\Lambda_r$. This group is called the fundamental group of the root system. 
For a finite-dimensional $\frakg$-module $V$ we have $ V = \bigoplus_{\lambda \in \Lambda}V_{\lambda}$, where $V_{\lambda} := \{ v \in V \; ; \; h\cdot v = \lambda(h)v \mbox{ for all } h \in \mathfrak{h}\}.$ We call $\lambda \in \Lambda$ a weight of multiplicity $\mbox{dim}(V_{\lambda})$ of the $\frakg$-module $V$ if the vector space $V_{\lambda}$ is nontrivial. There is a one-to-one correspondence between dominant weights in $\Lambda$ and isomorphism-classes of irreducible $\frakg$-modules.\medskip

With respect to the previous section we are now interested in the following case: let $\frakg$ be a semisimple Lie algebra and $V$ an irreducible $\frakg$-module of dimension $r$, with $(V^{\tensor n})^{\frakg} = \{0\} \mbox{ for } 1\leq n < r.$ To restrict the possible Lie algebras that can occur, let us first consider the dual of $V$. There is a uniquely determined element $w_0$ in the Weyl group $W$ with the property $w_0(\Delta) = -\Delta$ (see \cite{bour2}, Chapitre VI, \S1.6, Corollaire 3). If $V = V(\lambda)$ with a dominant weight $\lambda$, we have $V^* \iso V(-w_0(\lambda))$ due to \cite{bour3}, Chapitre VIII, \S7.5, Proposition 11. The classification of simple Lie algebras shows $w_0 = -1$ for the Lie algebras of type $A_1, B_l \mbox{ for } l \geq 2, C_l \mbox{ for } l \geq 3, D_l \mbox{ for even } l, E_7,E_8,F_4,G_2.$ In all these cases $V(\lambda) \iso V(-w_0(\lambda)) \iso V(\lambda)^*$, hence $(V \tensor V)^{\frakg} = (V \tensor V^*)^{\frakg} = \mbox{End}_{\frakg}V \neq \{0\}$. There only remain the cases $A, D_l \mbox{ for odd } l, \mbox{ and } E_6$. In the sequel we will exclude the Lie algebras of type $D_l$ and $E_6$. \medskip
First, let us fix some more notations. The set of weights of the $\frakg$-module $V(\lambda)$ is denoted by $\Pi(\lambda)$, the multiplicity of a weight $\mu \in \Pi(\lambda)$ is $m_{\lambda}(\mu) := \mbox{dim}(V(\lambda)_{\mu})$.

\begin{lemma} \label{gewicht}
Let $\frakg$ be a semisimple Lie algebra and $V(\lambda)$ an irreducible $\frakg$-module with highest weight $\lambda$. Then $$m_{\lambda}(\lambda -t\alpha) = 1 \mbox{ for } 0 \leq t \leq \langle\lambda, \alpha\rangle$$ for all $\alpha \in \Delta$.
\end{lemma}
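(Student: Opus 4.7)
My plan is to prove this standard fact via two complementary estimates: an upper bound from the Poincar\'e--Birkhoff--Witt theorem and a lower bound from $\mathfrak{sl}_2$-theory applied to the simple root $\alpha$.

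For the upper bound, I would fix a highest-weight vector $v_\lambda \in V(\lambda)_\lambda$ and note that $V(\lambda) = U(\frakg) \cdot v_\lambda = U(\frakn^-) \cdot v_\lambda$, where $\frakn^-$ is spanned by the $\frakg_{-\beta}$ for $\beta \in \Phi^+$. By PBW, any element of weight $\lambda - t\alpha$ is a linear combination of vectors of the form $f_{\beta_1}\cdots f_{\beta_s} v_\lambda$ with $f_{\beta_i} \in \frakg_{-\beta_i}$, $\beta_i \in \Phi^+$, and $\sum_{i=1}^{s}\beta_i = t\alpha$. Since $\alpha \in \Delta$ is simple and every positive root has nonnegative integer coordinates in $\Delta$, the equality $\sum \beta_i = t\alpha$ forces $s = t$ and each $\beta_i = \alpha$. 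Therefore $V(\lambda)_{\lambda - t\alpha}$ is spanned by $f_\alpha^t v_\lambda$ alone, so its dimension is at most one for every $t \geq 0$.

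For the lower bound, I would invoke the $\mathfrak{sl}_2$-triple $(e_\alpha, h_\alpha, f_\alpha) \subset \frakg$ attached to $\alpha$. Because $\alpha$ is simple, $e_\alpha v_\lambda = 0$, and $h_\alpha v_\lambda = \langle \lambda, \alpha\rangle v_\lambda$. The standard representation theory of $\mathfrak{sl}_2$ then implies that the submodule $U(\mathfrak{sl}_2) \cdot v_\lambda$ is isomorphic to the irreducible $\mathfrak{sl}_2$-module of dimension $\langle\lambda,\alpha\rangle + 1$; in particular $f_\alpha^t v_\lambda \neq 0$ for $0 \leq t \leq \langle\lambda,\alpha\rangle$. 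Combining this with the upper bound yields $m_\lambda(\lambda - t\alpha) = 1$ in exactly this range.

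There is no real obstacle here: the only step that requires a moment of care is the combinatorial observation that $t\alpha$ admits a unique decomposition as a sum of positive roots when $\alpha$ is simple, which is needed to pass from the PBW spanning set to a one-dimensional span. Once that is in place, the two estimates fit together cleanly and give the claim.
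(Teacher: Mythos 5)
Your proof is correct, but it takes a genuinely different route from the paper. The paper deduces the statement from Freudenthal's recursion: it computes both sides of the multiplicity formula for $\mu = \lambda - t\alpha_0$, observing (implicitly) that among the positive roots only $\alpha_0$ itself can contribute to the sum on the right, and then inducts on $t$. You instead combine a PBW upper bound with an $\mathfrak{sl}_2$ lower bound. Your key combinatorial point -- that $t\alpha$ decomposes as a sum of positive roots only as $\alpha+\dots+\alpha$, because a simple root has a single nonzero coordinate in $\Delta$ and the only positive root proportional to $\alpha$ in a reduced system is $\alpha$ itself -- is essentially the same observation the paper uses to kill the cross terms in Freudenthal's sum, so the two arguments are not unrelated; but your packaging avoids the formula entirely and only needs that the root spaces $\frakg_{-\beta}$ are one-dimensional and the standard $\alpha$-string computation $e_\alpha f_\alpha^t v_\lambda = t(\langle\lambda,\alpha\rangle - t + 1)f_\alpha^{t-1}v_\lambda$. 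Your approach in fact proves slightly more: $\dim V(\lambda)_{\lambda - t\alpha} \leq 1$ for \emph{all} $t \geq 0$, with equality exactly for $0 \leq t \leq \langle\lambda,\alpha\rangle$ and vanishing beyond, whereas the paper's computation is only carried out in the stated range. The paper's choice of Freudenthal is presumably for uniformity with the following lemma, where multiplicities in $V(\lambda)\otimes V(\lambda)$ are again tracked numerically; either proof serves the later applications equally well.
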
 

\begin{proof}
We use the formula of Freudenthal (\cite{hum1}, Theorem 22.3) and prove the claim for an $\alpha_0 \in \Delta$. Defining $\delta := \frac{1}{2}\sum_{\alpha \succ 0}\alpha$, we have for the multiplicities $m_{\lambda}(\mu)$ the recursive formula $$((\lambda+\delta,\lambda+\delta)-(\mu+\delta,\mu+\delta))m_{\lambda}(\mu) = 2\sum_{\alpha \succ 0}\sum_{i = 1}^{\infty}m_{\lambda}(\mu+i\alpha)(\mu+i\alpha,\alpha).$$    
Now $m_{\lambda}(\lambda) = 1$ due to \cite{hum1}, Theorem 20.2. Suppose we already computed $m_{\lambda}(\lambda-(t-1)\alpha_0) = 1$. For $\mu = \lambda -t\alpha_0$ the right side of the Freudenthal formula is \begin{eqnarray*} 2\sum_{i=0}^{t-1}(\lambda-i\alpha_0,\alpha_0) & = & 2(t\lambda-\frac{t^2-t}{2}\alpha_0,\alpha_0) \\ & = & (2t\lambda-(t^2-t)\alpha_0,\alpha_0). \end{eqnarray*} Because of $\langle\delta,\alpha_0\rangle = 1$ (\cite{bour2}, Chapitre VI, \S1.10, Proposition 29) we have $2(\delta, \alpha_0) = (\alpha_0, \alpha_0)$, and computing the scalar products on the left side of the formula yields \begin{eqnarray*} & & (\lambda+\delta,  \lambda+\delta)-(\lambda-t\alpha_0+\delta, \lambda-t\alpha_0+\delta) \\ & = & (\lambda+\delta, \lambda+\delta)-((\lambda+\delta, \lambda+\delta) -2(\lambda+\delta, t\alpha_0)+(t\alpha_0, t\alpha_0)) \\ & = & 2t(\lambda+\delta,  \alpha_0) -t^2(\alpha_0, \alpha_0) \\ & = & 2t(\lambda, \alpha_0) + 2t(\delta,\alpha_0) - t^2(\alpha_0,\alpha_0) \\ & = & 2t(\lambda, \alpha_0) - (t^2-t)(\alpha_0,  \alpha_0) \\ & = & (2t\lambda-(t^2-t)\alpha_0, \alpha_0). \end{eqnarray*} 
It follows that $m_{\lambda}(\lambda-t\alpha_0) = 1$, since for $1 \leq t \leq  \langle\lambda, \alpha_0\rangle$ we have $(2t\lambda-(t^2-t)\alpha_0,  \alpha_0) \neq 0$. 
\end{proof}

\begin{lemma} \label{kompo}
Let $V(\lambda)$ be an irreducible $\frakg$-module. Then $$V(2\lambda) \oplus V(2\lambda-\alpha)\oplus \dots \oplus V(2\lambda-t\alpha) \subseteq V(\lambda) \tensor V(\lambda)$$ for $t = \langle \lambda,  \alpha\rangle$ and all $\alpha \in \Delta$.
\end{lemma}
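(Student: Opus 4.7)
The plan is to compute the multiplicity of the weight $\nu_s := 2\lambda - s\alpha$ in $V(\lambda) \otimes V(\lambda)$ directly, and then to identify which irreducible summands of the tensor product can possibly contribute to this weight space. Combining these two pieces of information will force every $V(2\lambda-s\alpha)$ with $0 \le s \le t$ to appear.

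First I would check that each candidate weight $\nu_s$ is dominant for $0 \le s \le t$: for the simple root $\alpha$ itself one has $\langle \nu_s, \alpha\rangle = 2(t-s) \ge 0$, and for any other $\beta \in \Delta$ the Cartan integer $\langle \alpha, \beta\rangle$ is non-positive, so $\langle \nu_s, \beta\rangle = 2\langle\lambda,\beta\rangle - s\langle\alpha,\beta\rangle \ge 0$. Next I would compute $m_{V\otimes V}(\nu_s)$. Any decomposition $\nu_s = \mu_1 + \mu_2$ with $\mu_i$ a weight of $V(\lambda)$ may be written as $\mu_i = \lambda - \eta_i$ with $\eta_i$ in the positive root lattice, and $\eta_1 + \eta_2 = s\alpha$. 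Linear independence of the simple roots forces $\eta_i = j_i\alpha$ with $j_1+j_2 = s$ and $0 \le j_i \le s \le t$. By Lemma \ref{gewicht} each such weight $\lambda - j_i\alpha$ has multiplicity exactly $1$ in $V(\lambda)$, so summing the $s+1$ decompositions yields $m_{V\otimes V}(\nu_s) = s+1$.

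Then I would determine the irreducible components $V(\mu) \subseteq V\otimes V$ that can contribute to the weight space $\nu_s$. The highest weight $\mu$ of any summand satisfies $2\lambda - \mu \in \bbZ_{\ge 0}\Delta$, and if $\nu_s$ is a weight of $V(\mu)$ then also $\mu - \nu_s \in \bbZ_{\ge 0}\Delta$. Adding these two conditions gives an element of $\bbZ_{\ge 0}\Delta$ summing to $s\alpha$, which by the same linear-independence argument forces $\mu = 2\lambda - j\alpha$ for some $0 \le j \le s$. All such $\mu$ are dominant by the first paragraph. Moreover, Lemma \ref{gewicht} applied to $V(2\lambda - j\alpha)$ shows that $\nu_s$ occurs in $V(2\lambda - j\alpha)$ with multiplicity exactly $1$, since $s - j \le t \le \langle 2\lambda - j\alpha, \alpha\rangle = 2(t-j)$.

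Writing $n_j$ for the multiplicity of $V(2\lambda - j\alpha)$ in $V(\lambda)\otimes V(\lambda)$, the previous two paragraphs give the identities
\[
\sum_{j=0}^{s} n_j \;=\; s+1 \qquad \text{for every } 0 \le s \le t.
\]
Taking successive differences gives $n_s = 1$ for $0 \le s \le t$, which is exactly the asserted inclusion. I do not expect any serious obstacle here: the only slightly delicate point is the bookkeeping in the third paragraph, where one must verify that no irreducible summand outside the family $\{V(2\lambda - j\alpha)\}$ can contribute to the weight $\nu_s$, but this is immediate from the linear independence of the simple roots.
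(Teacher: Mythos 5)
Your argument is correct and follows essentially the same route as the paper's proof: both compute the multiplicity of $2\lambda-s\alpha$ in $V(\lambda)\tensor V(\lambda)$ as $s+1$ via Lemma \ref{gewicht}, observe that each $V(2\lambda-j\alpha)$ with $j\le s$ contributes exactly $1$ to that weight space, and conclude by an inductive (equivalently, telescoping) count that each summand occurs once. Your explicit checks that the weights $2\lambda-s\alpha$ are dominant and that no summand outside the family $\{V(2\lambda-j\alpha)\}$ can contribute are welcome refinements of details the paper leaves implicit.
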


\begin{proof}
The weights of the tensor product $V(\lambda) \tensor V(\lambda)$ are precisely the weights $\mu + \nu$ with weights $\mu,\nu \in \Pi(\lambda)$. Furthermore, after \cite{hum1}, Ex.7 on page 117 the multiplicities satisfy $$\mbox{dim}(V(\lambda) \tensor V(\lambda))_{\mu+\nu} = \sum_{\pi+\tau = \mu + \nu}\mbox{dim}V(\lambda)_{\pi}\cdot \mbox{dim}V(\lambda)_{\tau} = \sum_{\pi+\tau = \mu + \nu}m_{\lambda}(\pi)m_{\lambda}(\tau).$$ Hence we can compute the multiplicities of $2\lambda-t\alpha$ with $0 \leq t \leq \langle \lambda,  \alpha \rangle$: $$\mbox{dim}(V(\lambda)\tensor V(\lambda))_{2\lambda-t\alpha} = \sum_{i = 0}^tm_{\lambda}(\lambda-i\alpha)m_{\lambda}(\lambda-(t-i)\alpha) = t+1$$ due to Proposition \ref{gewicht}. The assertion of the lemma can be proved inductively. Since $2\lambda$ is the highest weight of $V(\lambda) \tensor V(\lambda)$ with multiplicity $1$, we conclude that $V(2\lambda)$ is a submodule of $V(\lambda) \tensor V(\lambda)$. Suppose we already showed $$V(2\lambda) \oplus V(2\lambda-\alpha)\oplus \dots \oplus V(2\lambda-(t-1)\alpha) \subseteq V(\lambda) \tensor V(\lambda).$$ Define $m:= \langle \lambda,  \alpha \rangle$ and observe that  $$\langle 2\lambda -i\alpha, \alpha \rangle = 2 \langle\lambda, \alpha \rangle - i \langle \alpha,  \alpha \rangle = 2m-2i.$$ Again we conclude using Proposition \ref{gewicht} $$m_{2\lambda-i\alpha}(2\lambda-t\alpha) = 1 \mbox{ for all } 0  \leq i \leq t.$$ Hence the $\frakg$-module $V(2\lambda)\oplus V(2\lambda-\alpha)\oplus \dots \oplus V(2\lambda-(t-1)\alpha)$ contains the weight $2\lambda-t\alpha$ with multiplicity $t$. But we showed above that $V(\lambda) \tensor V(\lambda)$ contains this weight with multiplicity $t+1$. It follows $V(2\lambda-t\alpha) \subseteq V(\lambda) \tensor V(\lambda)$.  
\end{proof}

Let us now consider a Lie algebra $\frakg$ of type $D_l$, with $l$ odd. We know from \cite{bour2}, Chapitre $VI$, Planche $IV$ that $-w_0$ is the automorphism of the root system that permutes $\alpha_{l-1}$ and $\alpha_l$ and fixes all other $\alpha_i$. The same holds for the basis $\lambda_1,\dots,\lambda_l$ of the root lattice. For a dominant weight $\lambda = \sum_{i=1}^la_i\lambda_i \in \Lambda$ with $a_{l-1} = a_l$ we hence have $V(\lambda)^* = V(-w_0(\lambda)) = V(\lambda)$, that is $(V(\lambda)\tensor V(\lambda))^{\frakg} \neq 0$. So it is enough to consider the case $a_{l-1} \neq a_l$. For symmetry reasons it is enough to check the claim for $a_{l-1} > a_l$. The Cartan matrix describes the change of basis from  $\alpha_1,\dots,\alpha_l$ to $\lambda_1,\dots,\lambda_l$, hence we get $\alpha_{l-1} = 2\lambda_{l-1}-\lambda_{l-2}$. We have seen in Lemma \ref{kompo} that the tensor product $V(\lambda) \tensor V(\lambda)$ contains the $\frakg$-modules $$V(2\lambda-t\alpha_{l-1}) = V(2a_1,2a_2,\dots,2a_{l-3},2a_{l-2}+t,2a_{l-1}-2t,2a_l)$$ for all $0\leq t \leq \langle \lambda,  \alpha_{l-1}\rangle = a_{l-1}$. In particular, for $t = a_{l-1} - a_l$ we obtain the self-dual module $V(2a_1,\dots,2a_l,2a_l)$ which yields a nontrivial $\frakg$-invariant element in $V(\lambda)^{\tensor 4}$. Then the assumption $(V(\lambda)^{\tensor n})^{\frakg} = \{0\}$ for all $1 \leq n <r$ implies $\mbox{dim}V(\lambda) \leq 4$. But the smallest irreducible $\frakg$-module with $l \geq 4$ is of dimension $2\cdot l$, see \cite{bour3}, Chapitre VII, page 214. It follows that the only possibility is $D_3$, which is isomorphic to $A_3$.\medskip
    
Next we will exclude the Lie algebra of type $E_6$. Let $\lambda = \sum_{i = 1}^6a_i\lambda_i$ denote a dominant weight. We will again show that for some sufficiently high $n$ the tensor product $V(\lambda)^{\tensor n}$ contains a self-dual submodule, but the computations are more complicated than in the previous case. Due to \cite{bour3}, Chapitre VII, Planche V, the automorphism $w_0$ acts on the weight lattice by $-w_0((a_1,a_2,a_3,a_4,a_5,a_6)) = (a_6,a_2,a_5,a_4,a_3,a_1).$ Hence we have to find a submodule in $V(\lambda)^{\tensor n}$ with highest weight satisfying $a_1 = a_6$ and $a_3 = a_5$. Because  $\lambda_2$ and $\lambda_4$ do not play any role in these computations, they are omitted in the sequel. We first assume $\lambda = (a_1,0,a_5,0)$. The Cartan matrix of $E_6$ implies the equations $\alpha_1 = 2\lambda_1- \lambda_3$ and $\alpha_5 = 2\lambda_5-\lambda_4-\lambda_6.$ One concludes using Lemma \ref{kompo} that for $2s \leq a_1$ and $2t \leq a_5$ the irreducible $\frakg$-modules with highest weight $(2a_1-2s,s,2a_5,0)$ resp. $(2a_1,0,2a_5-2t,t)$ are contained in $V(\lambda)\tensor V(\lambda)$. It follows that $V(\lambda)^{\tensor 4}$ contains the submodule of highest weight $(4a_1-2s,s,4a_5-2t,t)$. This module is self-dual if \begin{eqnarray*} 4a_1-2s &=& t \\ 4a_5-2t &=& s    \end{eqnarray*} Hence we find a self-dual submodule with \begin{eqnarray*} t &=& \frac{4}{3}(2a_1-a_5) \\ s &=& \frac{4}{3}(2a_5-a_1) \end{eqnarray*} if $t$ and $s$ are nonnegative integers. If $a_1$ and $a_5$ are divisible by $3$, we get the sufficient condition $$2a_1 \geq a_5 \geq \frac{1}{2}a_1$$for the occurence of a self-dual submodule in $V(\lambda)^{\tensor 4}$. If the parameters $a_1$ and $a_5$ do not satisfy these conditions, we have to do the following to find a submodule that does. Let $a_1 < a_5$ without restriction of generality. The tensor product $V(\lambda) \tensor V(\lambda)$ contains $V(2\lambda-a_5\alpha_5)$, with $2\lambda-a_5\alpha_5 = (2a_1,0,0,a_5);$ then $V(\lambda)^{\tensor 4}$ contains the module $V(4\lambda-2a_5\alpha_5-a_5\alpha_6)$ with highest weight $(4a_1,0,a_5,0)$. Repeating this $n$ times, one finally obtains a submodule with highest weight $(4^na_1,0,a_5,0)$ lying in the module $V(\lambda)^{\tensor 4^n}$. Now choose $n$ with  $$2 \cdot 4^na_1 \geq a_5 \geq 2 \cdot 4^{n-1}a_1= \frac{1}{2}4^na_1,$$which gives a submodule satisfying the conditions above. Hence we showed: If $V$ is an irreducible $\frakg$-module with highest weight $(a_1,0,a_5,0)$, then $V^{\tensor k}$ contains a self-dual submodule, where $$k \leq 4^n\cdot 3 \cdot 4 \leq 24 \cdot a_5.$$ It follows that $V(\lambda)^{\tensor 2k}$ has a nontrivial $\frakg$-invariant element.
It remains to start with an arbitrary $\frakg$-module $V(\lambda')$ and give an upper bound for $m$ such that $V^{\tensor m}(\lambda')$ contains a submodule of highest weight $(a_1,0,a_5,0)$. With the same arguments as above one easily sees that there is such a submodule for an $m\leq 32$, with $a_5 \leq 16\cdot\mbox{max}\{a_i'\}$. It follows that $V(\lambda')^{\tensor n} = (V(\lambda')^{\tensor 32})^{\tensor 2k}$ has a nontrivial $\frakg$-invariant element for an $n$ with $$n \leq 32\cdot 2k \leq 32\cdot 2\cdot 24 \cdot a_5 \leq 64\cdot 24 \cdot 16\mbox{max}\{a_i'\}= 24576 \cdot \mbox{max}\{a_i'\}.$$  Looking at the dimension formula of Weyl (e.g. \cite{hum1}, 24.3) $$\mbox{dim}(V(\lambda)) = \frac{\Pi_{\alpha \preceq 0}\langle\lambda + \delta, \alpha\rangle}{\Pi_{\alpha \preceq0}\langle\delta, \alpha\rangle} = c\cdot \Pi_{\alpha \preceq 0}(\sum_{i = 1}^{|\Delta|}c_i^{(\alpha)}(\lambda_i+1))$$ with a constant $c \in \bbQ$ and nonnegative integers $c_i^{(\alpha)}$ depending only on the Lie algebra $\frakg$, one finds that the map $\lambda \mapsto \mbox{dim}V(\lambda)$ grows polynomially in $\lambda_i$ and is strictly increasing, in particular it grows much faster than the linear term $24576 \cdot \mbox{max}\{a_i'\}$. With the help of a table for the dimensions of irreducible $\frakg$-modules (see e.g. \cite{mck}) one can write down all dominant weights whose associated irreducible $\frakg$-module is of dimension $\leq 24576 \cdot \mbox{max}\{a_i\}$ and not self-dual: $$\begin{array}{ccc} (a,0,0,0,0,0) & (0,0,b,0,0,0) & (1,1,0,0,0,0) \\ (1,0,1,0,0,0) & (1,0,0,0,1,0) & (2,0,0,0,0,1) \\ (0,1,1,0,0,0) & (1,2,0,0,0,0) & (2,1,0,0,0,0) \\ (3,0,0,0,0,1) \end{array}$$ with $a \leq 5$ and $b \leq 2$. We can check the claim for these cases using a computer and software like e.g. LiE, which can be found on  http://young.sp2mi.univ-poitiers.fr/~marc/LiE. Hence we also have excluded the Lie algebra $E_6$.

\begin{prop} \label{invar}
Let $\frakg$ be a semisimple Lie algebra over an algebraically closed field of characteristic $0$ and $V$ an $r$-dimensional, irreducible and faithful $\frakg$-module that satisfies $$(V^{\tensor n})^{\frakg} = \{0\} \mbox{ for }1 \leq n < r.$$ Then all components of $\frakg$ are of type $A$.
\end{prop}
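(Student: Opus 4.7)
The plan is to reduce Proposition~\ref{invar} to the simple case that the preceding case-by-case analysis has essentially settled. Write $\frakg = \frakg_1 \oplus \cdots \oplus \frakg_k$ as a direct sum of simple ideals. Since $V$ is an irreducible $\frakg$-module, the decomposition theorem in \cite{bour3}, Chapitre VIII, gives an isomorphism $V \iso V_1 \tensor \cdots \tensor V_k$ of $\frakg$-modules with each $V_i$ an irreducible $\frakg_i$-module; the faithfulness of $V$ forces each $V_i$ to be a faithful, nontrivial $\frakg_i$-module. Setting $r_i := \dim V_i$, we have $r = \prod r_i$, and I would work with the numbers $d_i := \min\{n \geq 1 \,:\, (V_i^{\tensor n})^{\frakg_i} \neq 0\}$.

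The key numerical observation is that $d_i \leq r_i$ for every $i$. Since $\frakg_i$ is simple and hence perfect, every one-dimensional $\frakg_i$-representation is trivial; in particular the determinant $\bigwedge^{r_i} V_i$, which appears as a direct summand of $V_i^{\tensor r_i}$ in characteristic zero, is the trivial $\frakg_i$-module. Hence $(V_i^{\tensor r_i})^{\frakg_i} \neq 0$ and $d_i \leq r_i$. Moreover, the preceding case-by-case analysis yields the strict inequality $d_i < r_i$ whenever $\frakg_i$ is not of type $A$: for types $A_1, B_l, C_l, D_l$ (even $l$), $E_7, E_8, F_4, G_2$ the module $V_i$ is self-dual, so $(V_i^{\tensor 2})^{\frakg_i} \neq 0$ while $r_i \geq 3$; for $D_l$ with odd $l \geq 5$ the explicit construction above produces invariants in $V_i^{\tensor 4}$ with $r_i \geq 2l \geq 10$; and for $E_6$ the weight-theoretic construction together with the finite computer check in the tables above disposes of all remaining cases.

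Now assume for contradiction that some $\frakg_j$ is not of type $A$. Setting $D := \mathrm{lcm}(d_1, \ldots, d_k)$, we obtain
$$D \;\leq\; \prod_{i=1}^k d_i \;<\; \prod_{i=1}^k r_i \;=\; r,$$
where the strict inequality uses $d_j < r_j$ together with $d_i \leq r_i$ for the remaining indices. For every $i$, a nonzero invariant $v_i \in (V_i^{\tensor d_i})^{\frakg_i}$ produces a nonzero invariant $v_i^{\tensor (D/d_i)} \in (V_i^{\tensor D})^{\frakg_i}$, and the canonical identification $V^{\tensor D} \iso V_1^{\tensor D} \tensor \cdots \tensor V_k^{\tensor D}$ of $\frakg$-modules gives
$$(V^{\tensor D})^{\frakg} \;=\; (V_1^{\tensor D})^{\frakg_1} \tensor \cdots \tensor (V_k^{\tensor D})^{\frakg_k} \;\neq\; 0,$$
contradicting the hypothesis $(V^{\tensor n})^{\frakg} = \{0\}$ for $1 \leq n < r$. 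Hence every $\frakg_j$ must be of type $A$. The main difficulty, the exclusion of simple types other than $A$, has already been handled in the preceding pages (with the $E_6$ analysis being the heaviest); the new ingredients here are merely the perfectness argument that bounds $d_i$ by $r_i$ and the $\mathrm{lcm}$ bookkeeping that produces a single tensor power in which the $\frakg$-invariants cannot vanish.
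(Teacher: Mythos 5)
Your proof is correct and follows essentially the same route as the paper: decompose $\frakg$ into simple ideals, write $V \iso V_1 \tensor \cdots \tensor V_k$, invoke the preceding case-by-case analysis to get an invariant in $V_j^{\tensor s}$ with $s < r_j$ for any non-type-$A$ factor, and exhibit a single tensor power $t < r$ of $V$ carrying a nonzero $\frakg$-invariant. The only difference is bookkeeping — the paper takes $t = s\prod_{i \neq j} r_i$ where you take $t = \mathrm{lcm}(d_1,\dots,d_k)$ — and your explicit justification via $\bigwedge^{r_i}V_i$ that each remaining factor contributes an invariant in degree $r_i$ usefully spells out a step the paper leaves implicit.
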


\begin{proof}
If the Lie algebra $\frakg$ is simple, we showed that it has to be of type $A$. Now let  $\frakg = \oplus_{i \in I}\frakg_i$ with simple Lie algebras $\frakg_i$. It follows from  \cite{bour3}, page 234, Ex.18f that there are irreducible $\frakg_i$-modules $V_i$ of dimension  $r_i$ with $V = \tensor_{i\in I}V_i$. If the Lie algebra $\frakg_1$ say was not of type $A$, there would be some $s < r_1$ with $V_1^{\tensor s}$ possessing a nontrivial $\frakg_i$-invariant element. But then the module $(\tensor_{i \in I}V_i)^{\tensor t}$ would also have an $\frakg_i$-invariant element for $t = s\prod_{i \in I\backslash\{1\}}r_i<r$, which is a contradiction. 
\end{proof}  

This finishes the proof of Theorem \ref{theo1}.
\end{proof}

\begin{remark}
One might ask how to improve the upper bounds for the occurence of a nontrivial $\frakg$-invariant element in $V^{\tensor n}$ for an irreducible module $V$ of a semisimple Lie algebra $\frakg$. Obviously there is the following lower bound: if $\mu \preceq \lambda$ are dominant weights such that $n\lambda-\mu$ does not lie in the lattice of roots $\Lambda_r$, then $\Pi(V(\lambda)^{\tensor n})$ can not contain the weight $\mu$ since all these weights have the form $n\lambda-\sum_{\alpha \in \Delta}k_{\alpha}\alpha$ with integral $k_{\alpha}$. Hence the module $V(\mu)$ can not be contained in $V(\lambda)^{\tensor n}$ and the maximal order of all elements of the fundamental group $\pi:= \Lambda/\Lambda_r$ is a lower bound. We propose the following conjecture:   
\end{remark}

\begin{conj} \label{verm}
Let $\frakg$ be a semisimple Lie algebra and $V(\lambda)$ an irreducible $\frakg$-module of dimension $r$, with highest weight $\lambda$. Let $\mu \preceq \lambda$ be some dominant weight in $\Pi(\lambda)$. Then $V(\lambda)^{\tensor n}$ contains the irreducible $\frakg$-module $V(\mu)$ for an $n \leq \mbox{max}_{g \in \pi}\mbox{ord}(g)$ if $\pi$ is nontrivial and for $n = 2$ otherwise.
\end{conj}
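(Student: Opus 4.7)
The necessary condition derived in the preceding remark already forces $n \equiv 1 \pmod{d}$, where $d := \mathrm{ord}([\lambda]) \in \pi$, so the conjecture amounts to showing that this obvious congruence obstruction is essentially the only one. My plan is to first reduce to the case of a simple Lie algebra. If $\frakg = \bigoplus_{i \in I} \frakg_i$ with simple components, then both $V(\lambda)$ and $V(\mu)$ split as outer tensor products $V(\lambda) = \bigotimes_i V_i(\lambda_i)$ and $V(\mu) = \bigotimes_i V_i(\mu_i)$, with $\mu_i \in \Pi(\lambda_i)$ and $\mu_i \preceq \lambda_i$ for every $i$. One must then exhibit a common tensor power $n$ realizing each $V_i(\mu_i)$ inside $V_i(\lambda_i)^{\tensor n}$ simultaneously. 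This forces a \emph{stability} statement: if $V_i(\mu_i) \subseteq V_i(\lambda_i)^{\tensor n_i}$, then the same containment should hold for every $n$ in the arithmetic progression $n_i, n_i + d_i, n_i + 2 d_i, \ldots$, where $d_i := \mathrm{ord}([\lambda_i]) \in \pi_i$. Such stability is plausible because $d_i \lambda_i \in \Lambda_r$ implies that the trivial representation occurs as a summand of $V_i(d_i \lambda_i)^{\tensor 2}$, providing a spacer that can be absorbed without disturbing an existing summand.

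For the simple case itself, the core idea is to iterate Lemma~\ref{kompo}: that result produces, for every simple root $\alpha$, the chain $V(2\lambda), V(2\lambda - \alpha), \ldots, V(2\lambda - \langle \lambda, \alpha\rangle \alpha) \subseteq V(\lambda)^{\tensor 2}$. Combining this inductively one would hope to descend from $n\lambda$ to $\mu$ by stripping off multiples of simple roots one tensor factor at a time, subject to the constraint that each intermediate dominant weight remains in $\Pi(k\lambda)$. For type $A_l$ the Littlewood--Richardson rule makes this combinatorial check concrete, and the bound $n \leq l+1 = \max_{g \in \pi} \mathrm{ord}(g)$ should follow from Pieri-type manipulations. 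Types $B$, $C$, $D$ admit analogous branching rules; for the exceptional types $E_6$, $E_7$ the statement reduces to a finite computation amenable to software like LiE; and the types $E_8$, $F_4$, $G_2$ with trivial $\pi$ reduce to the single claim that every dominant $\mu \in \Pi(\lambda)$ appears in $V(\lambda)^{\tensor 2}$, which again should follow from Lemma~\ref{kompo} by descending along several simple roots simultaneously.

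The main obstacle, I expect, is that Lemma~\ref{kompo} only produces a one-dimensional slice of weights inside $V(\lambda)^{\tensor 2}$ along a single root direction, whereas a typical dominant $\mu \preceq \lambda$ requires simultaneous descents along several simple roots. The cleanest tool to bridge this gap is the Parthasarathy--Ranga Rao--Varadarajan (PRV) theorem, which guarantees that the dominant representative in the Weyl orbit of $w_1 \lambda + \cdots + w_n \lambda$ occurs as a summand of $V(\lambda)^{\tensor n}$ for any Weyl group elements $w_i$. Coupling PRV with the description of $\Pi(\lambda)$ restricted to the dominant chamber should reduce the conjecture to a lattice-theoretic question: given dominant $\mu \preceq \lambda$ with $\mu \in \Pi(\lambda)$, find the minimal $n$ with $n\lambda - \mu \in \Lambda_r$ for which such a PRV expression of $\mu$ exists. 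Showing that this minimal $n$ never exceeds $\max_{g \in \pi} \mathrm{ord}(g)$ is the subtle part: stability explains only the divisibility constraint, while the \emph{sharpness} of this upper bound seems to demand either a type-free refinement of PRV or meticulous case-by-case verification --- in particular for the exceptional root systems where \emph{a priori} no uniform combinatorial pattern is visible.
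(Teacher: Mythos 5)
There is a fundamental mismatch here: the statement you are addressing is labelled a \emph{conjecture} in the paper, and the paper deliberately offers no proof of it --- the only thing the author establishes is the lower bound in the preceding remark (namely that $n\lambda-\mu$ must lie in $\Lambda_r$, so $n\equiv 1 \pmod{\operatorname{ord}([\lambda])}$). Your submission is likewise not a proof but a research plan, and you say so yourself at every critical juncture (``should follow'', ``is plausible'', ``the subtle part \ldots seems to demand''). None of the three load-bearing steps --- the stability statement, the realization of an arbitrary dominant $\mu\in\Pi(\lambda)$ via a PRV expression, and the sharpness of the bound $n\le\max_{g\in\pi}\operatorname{ord}(g)$ --- is actually established, and the last two are exactly where the open content of the conjecture lives.

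Beyond the acknowledged gaps, one of your auxiliary claims is wrong as stated. You justify the stability/spacer step by asserting that $d_i\lambda_i\in\Lambda_r$ implies the trivial representation occurs in $V_i(d_i\lambda_i)^{\otimes 2}$. But $V(\nu)\otimes V(\nu)$ contains the trivial module if and only if $V(\nu)$ is self-dual, i.e.\ $\nu=-w_0\nu$, which is not a consequence of $\nu\in\Lambda_r$ (take $\nu=3\lambda_1$ for $A_2$). What the stability step actually requires is the trivial module inside $V(\lambda)^{\otimes d}$, and producing that is itself an instance of the very kind of statement being conjectured, so the argument as sketched is circular. The reduction to the simple case also has an unexamined arithmetic issue: even granting stability with period $d_i=\operatorname{ord}([\lambda_i])$, the smallest common $n$ in the progression $1+k\cdot\operatorname{ord}([\lambda])$ that dominates every $n_i$ need not stay below $\max_{g\in\pi}\operatorname{ord}(g)$ without further argument. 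In short: the paper has no proof to compare against, and your proposal does not close the conjecture either; at best it records a plausible strategy whose decisive steps remain open.
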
    

In particular, we could strenghten Proposition \ref{invar} by

\begin{cor} \label{korverm}
Assume that Conjecture \ref{verm} holds. Under the condition of Proposition \ref{invar} the Lie algebra $\frakg$ is simple of type $A_{r-1}$ and $V$ is the standard $r$-dimensional representation $V(\lambda_1)$ or its dual $V(\lambda_{r-1})$.
\end{cor}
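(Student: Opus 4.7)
My plan is to combine Proposition \ref{invar} with Conjecture \ref{verm} in two stages: first pin down the isomorphism class of the representation of each simple factor, and then force $\frakg$ to be simple. By Proposition \ref{invar} we may write $\frakg = \bigoplus_{i=1}^k \frakg_i$ with each $\frakg_i$ simple of type $A_{l_i}$. The faithfulness and irreducibility of $V$ give a decomposition $V = \bigotimes_{i=1}^k V_i$, where each $V_i = V(\lambda^{(i)})$ is a nontrivial irreducible $\frakg_i$-module of dimension $r_i \geq 2$, and $r = \prod_i r_i$. The fundamental group decomposes as $\pi = \prod_i \pi_i$ with $\pi_i \iso \bbZ/(l_i+1)\bbZ$, so $\mbox{max}_{g\in\pi}\ord(g) = \mbox{lcm}(l_1+1,\ldots,l_k+1)$.

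First I would show that each $V_i$ is the standard module $V(\lambda_1^{(i)})$ or its dual $V(\lambda_{l_i}^{(i)})$, the only irreducible $\frakg_i$-modules of the minimal dimension $l_i+1$. The input is that Conjecture \ref{verm}, applied to $\frakg_i$ with $\mu^{(i)} = 0$, produces a $\frakg_i$-invariant in $V_i^{\tensor n}$ at $n \leq l_i+1$ whenever $0 \in \Pi(\lambda^{(i)})$, i.e.\ whenever $\lambda^{(i)} \in \Lambda_{r,i}$. When $\lambda^{(i)} \notin \Lambda_{r,i}$ one chooses an intermediate dominant weight $\mu^{(i)} \in \Pi(\lambda^{(i)}) \cap \Lambda_{r,i}$ and iterates the conjecture on $V(\mu^{(i)})$, obtaining a trivial summand in $V_i^{\tensor n_i n'_i}$ with $n_i n'_i \leq (l_i+1)^2$. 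Extending this $\frakg_i$-invariant trivially across the remaining factors produces a $\frakg$-invariant in $V^{\tensor n}$, and a dimensional check comparing $(l_i+1)^2$ to the dimensions of non-minimal irreducible $A_{l_i}$-modules --- together with a separate self-duality argument in the small case $l_i = 1$, where every irreducible $A_1$-module is self-dual so that $V_i \tensor V_i$ already contains a trivial summand as soon as $r_i \geq 3$ --- produces the desired contradiction with the hypothesis.

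Once each $V_i$ is standard or dual standard of $\mathfrak{sl}_{r_i}$, one has $(V_i^{\tensor n})^{\frakg_i} \neq 0$ iff $r_i \mid n$, and the componentwise identity
\[
(V^{\tensor n})^{\frakg} = \bigotimes_{i=1}^k (V_i^{\tensor n})^{\frakg_i}
\]
shows that the first $n$ with $(V^{\tensor n})^{\frakg} \neq 0$ equals $\mbox{lcm}(r_1,\ldots,r_k)$. The hypothesis therefore forces $\mbox{lcm}(r_1,\ldots,r_k) \geq r = \prod_i r_i$, i.e.\ pairwise coprimality of the $r_i$. To exclude $k \geq 2$ under this coprimality I would invoke Conjecture \ref{verm} on the semisimple $\frakg$ itself with highest weight $\lambda = \sum_i \lambda_1^{(i)}$, so that $\mbox{max}_{g\in\pi}\ord(g) = r$, and use a weight-combinatorial analysis of $\Pi(\lambda)$ adapted to the product structure of $V$ to locate a trivial subrepresentation of $V^{\tensor n}$ at some $n < r$. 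Together with Step 1 this forces $k=1$, and $\frakg$ is simple of type $A_{r-1}$ with $V \iso V(\lambda_1)$ or $V(\lambda_{r-1})$.

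The principal obstacle is the descent step when $0 \notin \Pi(\lambda^{(i)})$, which happens for the middle fundamental weights $\lambda_j^{(i)}$ with $1 < j < l_i$, and --- more seriously in Step 2 --- when every $V_i$ is standard, since then the only dominant weight of $V(\lambda)$ inside $\Pi(\lambda)$ is $\lambda$ itself and Conjecture \ref{verm} cannot be applied directly to produce the trivial representation. One must then route through intermediate modules, either by explicitly decomposing $V^{\tensor m}$ for $m$ the order of the relevant highest weight in $\pi$ and then applying the conjecture to those summands whose highest weights lie in the root lattice, or by chaining several applications of the conjecture, all while keeping the cumulative tensor exponent strictly below $r$. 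This descent, guided by the explicit dimensional bounds for irreducible $A_l$-modules and the Freudenthal-type weight computations of the preceding section, is where the full strength of Conjecture \ref{verm} is genuinely activated, and where the bulk of the remaining case analysis must be carried out.
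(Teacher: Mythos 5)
The decisive gap is in your second stage, and it cannot be closed. After Stage~1 you have reduced to $\frakg = \bigoplus_{i=1}^k \frakg_i$ with $\frakg_i$ of type $A_{r_i-1}$ and each $V_i$ the standard module or its dual, and your own computation shows that the first $n$ with $(V^{\tensor n})^{\frakg} \neq 0$ is $\mathrm{lcm}(r_1,\dots,r_k)$, which equals $r = \prod_i r_i$ exactly when the $r_i$ are pairwise coprime. In that case the hypothesis of Proposition \ref{invar} is genuinely satisfied by a non-simple $\frakg$, so there is no trivial subrepresentation of $V^{\tensor n}$ at any $n < r$ to be ``located,'' and no weight-combinatorial refinement can produce one. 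Conjecture \ref{verm} gives nothing here either: for $\lambda = \sum_i \lambda_1^{(i)}$ the only dominant weight in $\Pi(\lambda)$ is $\lambda$ itself, and in any case $\pi \cong \prod_i \bbZ/r_i \cong \bbZ/r$ has $\mbox{max}_{g\in\pi}\ord(g) = r$, so the conjecture is consistent with the first invariant appearing only at $n = r$. Concretely, $\frakg = \mathfrak{sl}_2 \oplus \mathfrak{sl}_3$ acting on $V = \bbC^2 \tensor \bbC^3$ with $r = 6$ is irreducible and faithful, has $(V^{\tensor n})^{\frakg} = 0$ for $1 \leq n \leq 5$, and is compatible with Conjecture \ref{verm}, yet $\frakg$ is not simple. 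So the corollary as stated fails without an additional hypothesis of the kind appearing in Theorem \ref{theo1}(3) ($r$ a prime power, or $\dim (V^{\tensor r})^{\frakg} \leq 3$), which exclude precisely this coprime tensor product.

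For comparison, the paper's own proof is a few lines that in effect treat only the simple case: the fundamental group of $A_l$ is cyclic of order $l+1$, so the conjecture forces an invariant in $V^{\tensor n}$ for some $n \leq l+1$, whence $l+1 \geq r$, while $l \geq r$ admits no nontrivial $r$-dimensional module, leaving $l = r-1$ and $V \iso V(\lambda_1)$ or $V(\lambda_{r-1})$. The semisimple non-simple case is silently omitted, and that is exactly where the statement breaks; your attempt is in this respect more careful, since it isolates the genuine obstruction. One smaller, repairable point in Stage~1: every weight of $V(\lambda^{(i)})$ is congruent to $\lambda^{(i)}$ modulo the root lattice, so $\Pi(\lambda^{(i)}) \cap \Lambda_{r,i} = \emptyset$ whenever $\lambda^{(i)} \notin \Lambda_{r,i}$, and your proposed intermediate dominant weight does not exist; the chaining must instead pass through irreducible summands of $V(\lambda^{(i)})^{\tensor m}$ with $m$ the order of $\lambda^{(i)}$ in $\pi_i$, whose highest weights do lie in the root lattice. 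But that is cosmetic next to the Stage~2 problem.
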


\begin{proof}
The fundamental group $\pi$ of a Lie algebra of type $A_l$ is cyclic of order $l+1$, see for example \cite{hum1}, page 68. Hence $A_l$ with an $l<{r-1}$ is impossible, because its representations do not satisfy the condition $(V^{\tensor n})^{\frakg} = \{0\} \mbox{ for } 1\leq n < r$. For $l \geq r$ there are no nontrivial modules of dimension $r$. It follows that $\frakg$ has to be of type $A_{r-1}$, and the only possible modules of dimension $r$ are those mentioned above.
\end{proof}

\section{Analytic monodromy groups of vector bundles on p-adic curves}

This section deals with some properties of the image of a continuous representation $\rho_E: \pi_1(X,x) \to \mbox{GL}(\bbC_p)$ attached to a vector bundle $E$ with potentially strongly semistable reduction on the curve $X_{\bbC_p}$. We call this image the analytic monodromy group $G_{\rho_E}$ of the bundle $E$. We will investigate under which conditions this group is a p-adic analytic group and show how its Lie algebra is connected to the Lie algebra of the algebraic monodromy group. For stable bundles we will give a criterion under which conditions $G_{\rho_E}$ is defined over some finite field extension of $\bbQ_p$. Recall the following property of pro-finite groups.

\begin{defn}
A topological group $G$ is topologically generated by elements $g_1,\dots,g_r \in G$, if the closure of the subgroup generated by these elements is all of $G$.
\end{defn}

\begin{prop} \label{erzeugt}
Let $G$ be a pro-finite group, $H$ a closed subgroup and $d \in \bbN$ a positive integer. If for all open normal subgroups $N$ of $G$ the finite quotient $HN/N$ is generated by $d$ elements, then $H$ is topologically generated by $d$ elements.
\end{prop}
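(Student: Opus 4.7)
My plan is a standard compactness/inverse limit argument, exploiting the fact that $H$ is compact as a closed subset of the compact pro-finite group $G$, hence so is the product $H^d$. For each open normal subgroup $N$ of $G$ I would introduce the subset
\[
X_N := \bigl\{(h_1,\dots,h_d) \in H^d \,\bigm|\, \mbox{the images of the } h_i \mbox{ in } HN/N \mbox{ generate this group}\bigr\}.
\]
Three properties need to be checked. First, $X_N$ is closed in $H^d$, since it is the preimage of a (finite, hence closed) subset of $(HN/N)^d$ under the continuous reduction map. Second, $X_N$ is nonempty, because by hypothesis $HN/N$ admits a $d$-element generating set, and any such tuple can be lifted to $H^d$ along the surjection $H \onto H/(H \cap N) \cong HN/N$. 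Third, the family $\{X_N\}$ has the finite intersection property: given $N_1,\dots,N_k$, the intersection $N := N_1 \cap \cdots \cap N_k$ is again open normal in $G$, and the canonical surjections $HN/N \onto HN_i/N_i$ carry generating tuples to generating tuples, so $X_N \subseteq \bigcap_i X_{N_i}$.

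By compactness of $H^d$ I can then fix a tuple $(h_1,\dots,h_d) \in \bigcap_N X_N$. To finish, I must show that these elements topologically generate $H$. Let $h \in H$ be arbitrary. Given any open normal subgroup $N$ of $G$, the images of $h_1,\dots,h_d$ generate $HN/N$, so there exists $g \in \langle h_1,\dots,h_d\rangle$ with $gN = hN$, i.e.\ $g^{-1}h \in N \cap H$. Since $G$ is pro-finite, the subsets $N \cap H$ with $N$ ranging over the open normal subgroups of $G$ form a neighborhood basis of the identity in the subspace topology on $H$, so this shows $h$ lies in the closure of $\langle h_1,\dots,h_d\rangle$.

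The main obstacle is not a single hard computation but the bookkeeping behind the third property: one must verify that passing from $N_i$ to the smaller $N$ really strengthens the constraint on generating tuples, which rests on the identification $HN/N \cong H/(H\cap N)$ making the transition maps $HN/N \to HN_i/N_i$ surjective. Once this is in place, the rest is a textbook application of the finite intersection principle on the compact space $H^d$.
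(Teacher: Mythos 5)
Your proof is correct and complete; the paper itself gives no argument here, simply citing Proposition 1.5(ii) of Dixon--du Sautoy--Mann--Segal, and your compactness/finite-intersection argument on $H^d$ (using $HN/N \cong H/(H\cap N)$ and the fact that the sets $H\cap N$ form a neighborhood basis of the identity in $H$) is precisely the standard proof of that cited result.
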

\begin{proof}
\cite{dix}, Proposition 1.5ii.
\end{proof}

If we fix an abstract field embedding $\overline{\bbQ}_p \subseteq \bbC$, Grothendieck's comparison theorem (\cite{sga1}, expos\'e XII, Corollaire 5.2) shows that the algebraic fundamental group of a smooth projective curve $X$ over $\overline{\bbQ}_p$ is the pro-finite completion of the topological fundamental group. Since the latter is the free group of $2g$ generators $u_i,v_i, \, i=1,\dots,g$ modulo the relation $\Pi_{i = 1}^gu_iv_iu_i^{-1}v_i^{-1} = 1$, where $g$ denotes the genus of the curve $X$, we find that each finite quotient group is also generated by $2g$ elements. Hence the algebraic fundamental group $\pi_1(X,x)$ is topologically generated by $2g$ elements because it has the same finite quotients. \medskip

Furthermore, it was shown in \cite{dw1} that the image of a continuous representation  $\pi_1(X,x) \to \mbox{GL}_r(\bbC_p)$ can always be considered as lying in $\mbox{GL}_r(\frako)$: For an additive category $\mathcal{C}$ we denote by $\mathcal{C} \tensor \bbQ$ the category having the same objects as $\mathcal{C}$, with morphisms $\mbox{Hom}_{\mathcal{C} \tensor \bbQ}(A,B) = \mbox{Hom}_{\mathcal{C}}(A,B)\tensor \bbQ$. Then the natural functor $\mathbf{Rep}_{\pi_1(X,x)}(\frako) \tensor \bbQ \to \mathbf{Rep}_{\pi_1(X,x)}(\bbC_p)$ is essentially surjective, i.e. for every $\pi_1(X,x)$-module $V$ over $\bbC_p$ there is a $\pi_1(X,x)$-module $\Gamma$ over $\frako$ and an isomorphism $V \iso \Gamma \tensor \bbQ $, see the proof of Proposition 22 in \cite{dw1}. \medskip

Recall that for $n \in \bbN$ we denote $\frako_n := \frako/p^n\frako$. For a compact topological group $G \subseteq \mbox{GL}_r(\frako)$ let $G_n$ be the open normal subgroup $G_n := G \cap \mbox{ker}(\mbox{GL}_r(\frako) \longto \mbox{GL}_r(\frako_n)).$ We have $\bigcap_{n \in \bbN}G_n = \{1\}$, hence the normal subgroups $G_n$ establish a base for the neighbourhood of the unit element. Since $\mbox{GL}_r(\frako)$ carries a Hausdorff topology the group $G$ is a pro-finite group with $G = \projlim_{n \in \bbN}G/G_n.$

\begin{prop} \label{struktur}
Let $G \subset \mbox{GL}_r(\frako)$ be a compact subgroup and $n_0 \in \bbN$. For all $n \geq n_0$ there is a commutative diagramm with exact rows  $$\xymatrix{1 \ar[r] & G_n/G_{n+n_0} \ar[r] \ar@{^{(}->}[d] & G/G_{n+n_0} \ar[r] \ar@{^{(}->}[d]  & G/G_n \ar[r] \ar@{^{(}->}[d] & 1 \\ 1 \ar[r] & M_r(p^n\frako_{n+n_0}) \ar[r]^-{A \mapsto 1_r+A} & \mbox{GL}_r(\frako_{n+n_0}) \ar[r] & \mbox{GL}_r(\frako_n) \ar[r] & 1.}$$ In particular, $G_n/G_{n+n_0}$ is a finite abelian $p$-group isomorphic to $(\bbZ/p^{n_0})^{k_n}$ for some $k_n \geq 0$ and $G_1 \subseteq G$ is an open pro-$p$ subgroup.
\end{prop}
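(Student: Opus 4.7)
The plan is to analyze the bottom row first, transfer its structure to the top row via the (straightforward) vertical injections, and then deduce the structural claims about $G_n/G_{n+n_0}$ from the additive description of the kernel.

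First, I would examine the bottom row. The natural reduction $\mathrm{GL}_r(\frako_{n+n_0}) \to \mathrm{GL}_r(\frako_n)$ is surjective (lift any matrix representative entry-wise and observe that the determinant remains a unit), and its kernel consists exactly of matrices of the form $1_r + A$ with $A \in M_r(p^n\frako_{n+n_0})$. The crucial point, and the only real computation, is that for $n \geq n_0$ one has $A \cdot B \in M_r(p^{2n}\frako_{n+n_0}) = 0$ whenever $A, B \in M_r(p^n\frako_{n+n_0})$, since $2n \geq n+n_0$. Consequently $(1_r+A)(1_r+B) = 1_r + (A+B)$, which simultaneously shows that $1_r+A$ is invertible (inverse $1_r - A$), that the map $A \mapsto 1_r + A$ is a group homomorphism from the additive group $M_r(p^n\frako_{n+n_0})$ onto the kernel, and that this kernel is abelian. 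Bijectivity of this map is obvious.

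Next, I would observe that the vertical arrows are injections by the very definition of $G_m$: for each $m$, $G_m = G \cap \ker(\mathrm{GL}_r(\frako) \to \mathrm{GL}_r(\frako_m))$, hence $G/G_m \hookrightarrow \mathrm{GL}_r(\frako_m)$. The inclusion on the left arises because $G_n$ lies in the kernel of reduction mod $p^n$, so its image in $\mathrm{GL}_r(\frako_{n+n_0})$ lies in $1_r + M_r(p^n\frako_{n+n_0})$. The top row is then just the standard short exact sequence attached to the chain $G_{n+n_0} \subseteq G_n \subseteq G$, and the squares commute because all maps are induced by the single reduction morphism $\mathrm{GL}_r(\frako) \to \mathrm{GL}_r(\frako_{n+n_0})$.

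Finally, I would derive the structural statement. Compactness of $G$ together with openness of $G_{n+n_0}$ in $G$ (intersection of $G$ with the open subgroup of $\mathrm{GL}_r(\frako)$ given by reduction mod $p^{n+n_0}$) makes $G/G_{n+n_0}$, and hence $G_n/G_{n+n_0}$, finite. Under the left vertical embedding, $G_n/G_{n+n_0}$ sits as a finite subgroup of the additive group $M_r(p^n\frako_{n+n_0})$. This ambient group is annihilated by $p^{n_0}$ (since $p^{n_0}\cdot p^n\frako \subseteq p^{n+n_0}\frako$), so $G_n/G_{n+n_0}$ is a finite abelian $p$-group of exponent dividing $p^{n_0}$; by the structure theorem, it decomposes as a direct sum of cyclic $p$-groups of order at most $p^{n_0}$, which is the content of the claimed isomorphism type. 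Specializing to $n_0 = 1$ shows that each successive quotient $G_n/G_{n+1}$ (for $n \geq 1$) is an elementary abelian $p$-group, whence $G_1 = \varprojlim_n G_1/G_n$ is pro-$p$; and $G_1$ is open in $G$ because it is the intersection of $G$ with an open subgroup of $\mathrm{GL}_r(\frako)$.

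There is no real obstacle in this proof: once one notices the nilpotency $AB = 0$ in $M_r(p^n\frako_{n+n_0})$ for $n \geq n_0$, everything is formal. The only mildly subtle point is ensuring the openness of $G_{n+n_0}$ in $G$, which is needed to pass from compactness to finiteness of the quotient.
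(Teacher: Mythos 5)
Your argument follows the paper's proof step for step: you identify the kernel of the reduction $\mbox{GL}_r(\frako_{n+n_0}) \to \mbox{GL}_r(\frako_n)$ with the additive group $M_r(p^n\frako_{n+n_0})$ via $A \mapsto 1_r+A$, using the nilpotency $AB=0$ for $n \geq n_0$ (which the paper leaves implicit), you get the vertical injections straight from the definition of $G_m$, you deduce finiteness from compactness of $G$ together with openness of $G_{n+n_0}$, and you obtain the pro-$p$ claim from $G_1 = \projlim_n G_1/G_n$. This is exactly the paper's route, only written out in more detail.

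There is, however, one step where you claim more than your argument gives --- and the paper's own proof makes the identical leap. From the embedding of $G_n/G_{n+n_0}$ into the group $M_r(p^n\frako_{n+n_0})$, which is killed by $p^{n_0}$, you correctly conclude that $G_n/G_{n+n_0}$ is a finite abelian group which is a direct sum of cyclic groups of order \emph{at most} $p^{n_0}$; but you then assert that this ``is the content of the claimed isomorphism type''. It is not: the proposition asserts $G_n/G_{n+n_0} \iso (\bbZ/p^{n_0})^{k_n}$, i.e.\ a \emph{free} $\bbZ/p^{n_0}$-module, and an exponent bound does not yield freeness ($\bbZ/p^2 \oplus \bbZ/p$ has exponent $p^2$ but is not free over $\bbZ/p^2$). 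This is not mere pedantry, because the value group of $\frako$ is dense: for $p$ odd and $r=1$, let $\zeta$ be a unit generating the unramified quadratic extension of $\bbZ_p$ and set $G := \exp(\bbZ_p\,p \oplus \bbZ_p\,p^3\zeta)$, a compact subgroup of $1+p\frako \subset \mbox{GL}_1(\frako)$; since $|ap+bp^3\zeta| = \max(|a|p^{-1},|b|p^{-3})$, one computes $G_2/G_4 \iso \bbZ/p^2 \oplus \bbZ/p$, which is not of the stated form. So what your proof (like the paper's) actually establishes is only that $G_n/G_{n+n_0}$ is a finite abelian group of exponent dividing $p^{n_0}$; the free-module structure does not follow and appears to fail in general. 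Since that structure (constancy of $k_n$, the cardinality count) is what is invoked later in Proposition \ref{dicht} and Corollary \ref{equi}, this gap should be flagged rather than glossed by identifying the exponent bound with the claimed isomorphism type.
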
 

\begin{proof}
The injectivity of the vertical morphisms in the middle and on the right is obvious, the morphism on the left is just the restriction of the middle one. Since $G_n/G_{n+n_0}$ is a finite group and $M_r(p^n\frako_{n+n_0})$ is abelian of exponent $p^{n_0}$, the group $G_n/G_{n+n_0}$ is also an abelian $p$-group of exponent $p^{n_0}$, so $G_n/G_{n+n_0}\iso(\bbZ/p^{n_0})^k$ for some $k \geq 0$. In particular, the groups $G_1/G_n$ are finite $p$-groups for all $n \in \bbN$. The group $G_1$ is pro-finite as an open subgroup of $G$, and because $G_1 = \projlim_{n \in \bbN}G_1/G_{n+1}$ it is even an open pro-$p$ subgroup.
\end{proof}

Since the fundamental group $\pi_1(X,x)$ is in particular a compact topological group, this is also true for the image $G_{\rho}$ of a continuous representation $\rho: \pi_1(X,x) \to \mbox{GL}_r(\bbC_p)$. Due to Proposition \ref{struktur} the group $G_{\rho}$ contains an open pro-$p$ group. A very important class of pro-finite groups are $p$-adic analytic groups. We will briefly sketch some of their properties. A good reference for a group-theoretical approach to this theory is \cite{dix}. A topological group $G$ is called a $p$-adic analytic group if it carries the structure of a $p$-adic analytic manifold such that the multiplication $\cdot:G \times G \to G, \;(x,y) \mapsto xy$ and the inverse $i: G \to G, \;x \mapsto x^{-1}$ are analytic maps. The category of $p$-adic groups is closed under closed subgroups, quotients and extensions. There is the following purely group-theoretical characterization of $p$-adic analytic groups by Lazard (see \cite{laz} or \cite{dix}): a pro-$p$ group $G$ is called powerful if $G/\overline{G^p}$ is abelian for odd primes $p$ resp. if $G/\overline{G^4}$ is abelian for $p = 2$. Further, $G$ has finite rank $\mbox{rk(G)}$ if $\mbox{rk}(G) := \mbox{sup}\{\mbox{rk}(G/N) \; ; \; N \subseteq G \mbox{ open normal subgroup }\}$ is finite. The rank of an abstract finite group is defined as $\mbox{sup}\{d(H) \; ; \; \mbox{ subgroups } H \subseteq G\}$, where $d(H)$ denotes the cardinality of some minimal set of generators of $H$. 

\begin{thm}[Lazard] \label{lazard}
Let $G$ be a topological group. The following statements are equivalent:
\begin{enumerate}
\item
The group $G$ is $p$-adic analytic.
\item
The group $G$ contains a pro-$p$ subgroup of finite rank.
\item
The group $G$ contains an open, finitely generated, powerful pro-$p$ subgroup.
\end{enumerate}
\end{thm}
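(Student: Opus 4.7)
The plan is to prove the three equivalences (1) $\Leftrightarrow$ (2) $\Leftrightarrow$ (3) by funneling everything through the concept of a \emph{uniform} pro-$p$ group, i.e.\ a finitely generated, powerful, torsion-free pro-$p$ group, following the modern account of Lazard's theorem in Dixon--du Sautoy--Mann--Segal. Uniform groups are rigid enough to carry a canonical $\bbZ_p$-Lie algebra and a global chart $\bbZ_p^d \iso U$, yet common enough to sit as an open subgroup inside any candidate group on either side of the equivalence.

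First I would establish (3) $\Rightarrow$ (2). Given an open, finitely generated, powerful pro-$p$ subgroup $H \leq G$, the fundamental structure theorem for powerful pro-$p$ groups says that every closed subgroup of $H$ is topologically generated by at most $d(H) := \dim_{\bbF_p}(H/\overline{H^p[H,H]})$ elements, which yields $\mathrm{rk}(H) \leq d(H) < \infty$, and since $H$ is open in $G$ also $\mathrm{rk}(G) < \infty$. For (2) $\Rightarrow$ (3) I would start with an open pro-$p$ subgroup $P \leq G$ of finite rank and pass to a sufficiently deep term of the lower $p$-central series $P_{i+1} := \overline{P_i^p [P_i, P]}$; the content of this step is that in a finite-rank pro-$p$ group the commutator subgroup $[P_i, P_i]$ is eventually forced inside $P_i^p$ (for odd $p$; for $p = 2$ inside $P_i^4$), yielding powerfulness of $P_i$ for $i \gg 0$, while finite generation of $P_i$ is inherited from the rank bound.

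The hard direction (3) $\Leftrightarrow$ (1) I would attack via the Campbell--Baker--Hausdorff formalism. Given a uniform open subgroup $U \leq G$ with a minimal topological generating set $g_1, \dots, g_d$, the coordinate map $(a_1, \dots, a_d) \mapsto g_1^{a_1} \cdots g_d^{a_d}$ furnishes a homeomorphism $\bbZ_p^d \iso U$; powerfulness forces the $p$-adic valuations of iterated commutators to grow fast enough that the CBH series converges on $U$, equipping it with a $\bbZ_p$-Lie algebra $L(U)$ and making the group operations analytic in these coordinates, which gives (3) $\Rightarrow$ (1). Conversely, if $G$ is $p$-adic analytic of dimension $d$, a sufficiently small neighbourhood of the identity is analytically isomorphic to a polydisc in $\bbZ_p^d$ on which the group law is given by a convergent CBH series, and extracting a suitable $p$-power subgroup of this neighbourhood yields an open uniform --- hence powerful and finitely generated --- pro-$p$ subgroup.

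The principal obstacle will be the delicate interplay between the algebraic powerfulness condition and the analytic convergence of the Campbell--Baker--Hausdorff series: one must verify that these two properties cut out exactly the same class of groups. This requires a careful quantitative analysis of how the $p$-adic valuation of an iterated commutator $[x_1, [x_2, \dots, [x_{n-1}, x_n] \dots]]$ grows with $n$, together with a correspondingly fine understanding of the denominators appearing in the CBH expansion, so that both sides of the bijection between uniform pro-$p$ groups and suitably $p$-valued $\bbZ_p$-Lie algebras are well-defined and mutually inverse.
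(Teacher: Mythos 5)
The paper offers no proof of this statement: it is quoted as Lazard's theorem with a pointer to \cite{laz} and \cite{dix}, so there is nothing in the text to compare your argument against. Your sketch reconstructs the standard modern proof from the latter reference --- reduce everything to the existence of an open uniform subgroup, put $\bbZ_p$-coordinates of the second kind on it, and pass back and forth to a $\bbZ_p$-Lie algebra via the Campbell--Baker--Hausdorff series --- and the plan is sound. Three points deserve attention. First, item (2) as printed omits the word \emph{open}; your proof of $(2)\Rightarrow(3)$ silently restores it, which is necessary (every topological group contains the trivial pro-$p$ subgroup, which has finite rank), and is also how the paper actually uses the theorem in Corollary \ref{equi}. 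Second, the crux of $(2)\Rightarrow(3)$ --- that a deep enough term of the lower $p$-central series of a finite-rank pro-$p$ group is powerful --- is asserted rather than argued; in \cite{dix} this is deduced from the theorem that a finite $p$-group of rank $r$ admits a characteristic powerful subgroup of index bounded purely in terms of $r$, which is where the real combinatorial work sits. Third, in $(3)\Rightarrow(1)$ you work with a uniform subgroup, whereas (3) only provides a finitely generated powerful one; you need the standard intermediate fact that such a group has an open uniform (torsion-free powerful) subgroup, for instance a suitable $p$-power subgroup.
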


We will use this theorem to give a criterion for a vector bundle with potentially strongly semistable reduction to correspond to a continuous representation $\rho_E$ with $p$-adic analytic image.

\begin{prop} \label{dicht}
Let $G \subset \mbox{GL}_r(\frako)$ be a compact and topologically finitely generated group. Using the notations of Proposition \ref{struktur}, if $k_n = k$ for all $n > n_0$ and some $k \in \bbN$, then the group $G_n^{p^{n_0}}$ is dense in $G_{n+n_0}$ for all $n > n_0$.
\end{prop}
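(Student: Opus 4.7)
The plan is to convert the density statement into a family of surjectivity conditions on the successive quotients $G_m/G_{m+1}$, and then verify those conditions via explicit binomial-expansion computations together with the rank hypothesis.

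Since $G_{n+n_0}$ is closed in the profinite group $G$ and the family $\{G_m\}_{m \geq n+n_0}$ forms a neighbourhood basis of the identity in $G_{n+n_0}$, density of $G_n^{p^{n_0}}$ in $G_{n+n_0}$ is equivalent to
\[
G_n^{p^{n_0}} \cdot G_m = G_{n+n_0} \qquad \text{for all } m \geq n+n_0.
\]
I would prove this by induction on $m$. The base case $m = n+n_0$ follows from $G_n^{p^{n_0}} \subseteq G_{n+n_0}$, which is immediate from the binomial expansion since $n \geq 1$. The inductive step from $m$ to $m+1$ reduces, by an easy coset argument using that $G_n^{p^{n_0}}$ is a subgroup, to the containment $G_m \subseteq G_n^{p^{n_0}} \cdot G_{m+1}$.

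To establish this containment, I would analyse the $p^{n_0}$-th power map $\Phi_m \colon G_{m-n_0} \to G_m$, $g \mapsto g^{p^{n_0}}$; note $G_{m-n_0} \subseteq G_n$ because $m \geq n+n_0$. Writing $g = 1_r + p^{m-n_0} M$, the binomial expansion
\[
(1_r + p^{m-n_0} M)^{p^{n_0}} = 1_r + p^m M + \sum_{j=2}^{p^{n_0}} \binom{p^{n_0}}{j}\, p^{j(m-n_0)} M^j
\]
together with $v_p\!\bigl(\binom{p^{n_0}}{j}\bigr) = n_0 - v_p(j)$ shows that, under the hypothesis $n > n_0$, every correction term has $p$-adic valuation $\geq m+1$. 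Combined with the commutator estimate $[G_{m-n_0}, G_{m-n_0}] \subseteq G_{2(m-n_0)} \subseteq G_{m+1}$ (again using $n > n_0$), this implies that $\Phi_m$ descends to a well-defined group homomorphism $\overline{\Phi}_m \colon G_{m-n_0}/G_{m-n_0+1} \to G_m/G_{m+1}$. Under the canonical identifications $G_j/G_{j+1} \hookrightarrow M_r(\frako_1)$ from Proposition \ref{struktur} (matching $1_r + p^j M$ with $M \bmod p$), the map $\overline{\Phi}_m$ becomes simply the identity on matrices, so its image equals the subgroup $L_{m-n_0} := \mathrm{im}(G_{m-n_0}/G_{m-n_0+1}) \subseteq M_r(\frako_1)$, and this image lies in $L_m := \mathrm{im}(G_m/G_{m+1})$.

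The whole proof thus reduces to verifying $L_{m-n_0} = L_m$ in $M_r(\frako_1)$. From the hypothesis $k_n = k$ for $n > n_0$ I would first deduce the sharper fact that $|G_j/G_{j+1}| = p^k$ for every $j > n_0$: since $G_j/G_{j+1}$ has exponent $p$ and is a quotient of $G_j/G_{j+n_0} \cong (\bbZ/p^{n_0})^k$, one has $|G_j/G_{j+1}| \leq p^k$; on the other hand $\prod_{i=0}^{n_0-1}|G_{j+i}/G_{j+i+1}| = |G_j/G_{j+n_0}| = p^{kn_0}$, so equality must hold at each step. Consequently both $L_{m-n_0}$ and $L_m$ have order $p^k$, and combined with the inclusion just established they coincide; hence $\overline{\Phi}_m$ is surjective and the induction closes. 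The main obstacle, in my view, is the careful bookkeeping of valuations in the binomial expansion needed to confirm that $\overline{\Phi}_m$ is simultaneously well-defined and equal to the identity on $M_r(\frako_1)$ — this is what dictates the hypothesis $n > n_0$, with a mild strengthening required when $p=2$; once this is settled, the rest is pure counting.
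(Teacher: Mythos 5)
Your proof is correct and follows essentially the same route as the paper's: in both arguments the key point is that the $p^{n_0}$-th power map linearizes, via the embeddings of Proposition \ref{struktur}, to an injective map on graded quotients (multiplication by $p^{n_0}$ on matrices, i.e.\ the identity after renormalization), which the constancy hypothesis $k_n=k$ then forces to be surjective by counting, and density follows by intersecting over the open normal subgroups $G_m$. The only cosmetic difference is that you filter in steps of size $1$, which requires the intermediate count $|G_j/G_{j+1}|=p^k$, whereas the paper works directly with the steps of size $n_0$ furnished by the hypothesis, namely the quotients $G_n/G_{n+n_0}\cong(\bbZ/p^{n_0})^k$.
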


\begin{proof} 
We have $G_n/G_{n+n_0} \subseteq M_r(p^n\frako_{n+n_0})$ for all $n > n_0$, i.e. there is the commutative diagram 
 $$\xymatrix{G_n/G_{n+n_0} \ar[r]^{g^{p^{n_0}}} \ar@{^{(}->}[d] & G_{n+n_0}/G_{n+2n_0} \ar@{^{(}->}[d] \\ M_r(p^n\frako_{n+n_0}) \ar@{^{(}->}[r]^{p^{n_0}\cdot g} & M_r(p^{n+n_0}\frako_{n+2n_0}).}$$ Hence the upper vertical morphism is injective and it even has to be an isomorphism because of $G_n/G_{n+n_0} \iso (\bbZ/p^{n_0})^k \iso G_{n+n_0}/G_{n+2n_0}$.
It follows that $G_{n+n_0} = (G_n)^{p^{n_0}}G_{n+2n_0}$ and for the same reason $G_{n+2n_0} = (G_{n+n_0})^{p^{n_0}}G_{n+3n_0}$. We then have the equality $G_{n+n_0} = (G_n)^{p^{n_0}}G_{n+3n_0}$, and if one continues this way one gets $G_{n+n_0} = (G_n)^{p^{n_0}}G_{n+ln_0} \mbox{ for all } l \in \bbN,$ with $\bigcap_{l \in \bbN} G_{n+ln_0} = \{1\}.$ For every subset $X$ of a pro-finite group $G$ we have $\overline{X} = \bigcap_{N \subseteq G}XN$, where $N$ runs over all open normal subgroups of $G$, see \cite{dix}, Proposition 1.2(iii). It is easy to see that it suffices to take an arbitrary subset of open normal subgroups with trivial intersection. We find $\overline{(G_{n})^{p^{n_0}}} = \bigcap_{l \in \bbN}(G_{n})^{p^{n_0}}G_{n+ln_0} = G_{n+n_0}$ for all $n > n_0$.
\end{proof}

\begin{cor} \label{equi}
For a compact group $G \subset \mbox{GL}_r(\frako)$ and $k \in \bbN$ the following statements are equivalent:
\begin{enumerate}
\item
The group $G$ is $p$-adic analytic of dimension $k$.
\item
$\#G_n \sim cp^{kn}$ with some $c \in \bbN$.
\item
In Proposition \ref{struktur} the equality $k_n = k$ holds for almost all $n \in \bbN$.
\end{enumerate}
\end{cor}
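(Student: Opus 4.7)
The equivalence (2) $\Leftrightarrow$ (3) is bookkeeping. Applying Proposition \ref{struktur} with $n_0 = 1$ gives $[G_n:G_{n+1}] = p^{k_n}$, so writing $\#G_n$ for $[G:G_n]$ we have $\#G_{n+1}/\#G_n = p^{k_n}$. Hence $k_n = k$ for $n \geq N$ forces $\#G_n = \#G_N \cdot p^{k(n-N)} \sim c\,p^{kn}$; conversely, $\#G_n \sim c\,p^{kn}$ forces $p^{k_n} \to p^k$, and since $k_n \in \bbN$ we conclude $k_n = k$ for all large $n$.

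The substantial direction is (3) $\Rightarrow$ (1). Fix $N$ with $k_n = k$ for $n \geq N$. The plan is to realize $G_N$ as an open, topologically finitely generated, powerful pro-$p$ subgroup of $G$ and apply Theorem \ref{lazard}. The main obstacle is circular: the crucial identity $\overline{G_n^p} = G_{n+1}$ from Proposition \ref{dicht} is what gives powerfulness, but that proposition assumes topological finite generation of $G$. So I must establish topological finite generation independently first, exploiting the $p$-th power map directly. For odd $p$ and $g = 1 + p^n A \in G_n$ with $n \geq 1$, the expansion $(1 + p^n A)^p = 1 + p^{n+1} A + \binom{p}{2} p^{2n} A^2 + \ldots$ shows $g^p \in G_{n+1}$ with $g^p \equiv 1 + p^{n+1} A \pmod{G_{n+2}}$. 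Identifying $G_n/G_{n+1}$ with its image $V_n$ in the $\bbF_p$-vector space $M_r(\frako/p\frako)$ via $1 + p^n A \mapsto A \bmod p$, the $p$-th power map becomes the inclusion $V_n \subseteq V_{n+1}$; since $\dim V_n = k_n = k$ for $n \geq N$, these subspaces coincide to a common $V$. Pick $g_1, \ldots, g_k \in G_N$ whose classes form a basis of $V$; iterating the identity gives $g_i^{p^m} \equiv 1 + p^{N+m} A_i \pmod{G_{N+m+1}}$, so the classes of the $g_i^{p^m}$ generate $G_{N+m}/G_{N+m+1}$ for every $m \geq 0$. A routine induction on $m$ yields $\overline{\langle g_1, \ldots, g_k\rangle} \cdot G_{N+m} = G_N$ for all $m$, so $H := \overline{\langle g_1, \ldots, g_k\rangle} = \bigcap_m HG_{N+m} = G_N$. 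In particular, $G$ is topologically finitely generated. Proposition \ref{dicht} now applies and gives $\overline{G_n^p} = G_{n+1}$ for $n \geq 2$; thus $G_N/\overline{G_N^p} \cong (\bbZ/p)^k$ is abelian, $G_N$ is powerful, and Theorem \ref{lazard} concludes that $G$ is $p$-adic analytic. Enlarging $N$ so that $N > 1/(p-1)$, Lemma \ref{lemma1} makes $G_N$ torsion-free and hence uniform; then $\dim G = d(G_N) = \dim_{\bbF_p}(G_N/G_{N+1}) = k$. The case $p = 2$ runs analogously with $n_0 = 2$.

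For (1) $\Rightarrow$ (3), use $p$-adic Lie theory. The Lie algebra $\frakg$ of $G$ is a $\bbQ_p$-subspace of $M_r(\bbC_p)$ of dimension $k$, and $\Lambda := \frakg \cap M_r(\frako)$ is a compact open $\bbZ_p$-submodule of $\frakg$, hence a free $\bbZ_p$-lattice of rank $k$, with $\frakg \cap p^n M_r(\frako) = p^n\Lambda$. For $n$ sufficiently large the exponential restricts to a bijection $\exp\colon p^n\Lambda \to G_n$ carrying $p^{n+1}\Lambda$ onto $G_{n+1}$; the Baker-Campbell-Hausdorff formula shows the induced map $p^n\Lambda/p^{n+1}\Lambda \to G_n/G_{n+1}$ is a group homomorphism, hence a group isomorphism by the bijectivity of $\exp$. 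Since $p^n\Lambda/p^{n+1}\Lambda \cong \Lambda/p\Lambda$ has order $p^k$, we conclude $k_n = k$ for all large $n$.
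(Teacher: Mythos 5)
Your proof is correct; it shares the paper's skeleton for $(2)\Leftrightarrow(3)$ and for $(3)\Rightarrow(1)$ (Proposition \ref{dicht} gives $\overline{G_N^p}=G_{N+1}$, so $G_N$ is powerful and Theorem \ref{lazard} applies), but it differs in two substantive ways. First, you rightly notice that Corollary \ref{equi} assumes only compactness whereas Proposition \ref{dicht} is stated for topologically finitely generated groups, and you close this gap by extracting $k$ topological generators of $G_N$ directly from the behaviour of the $p$-power map on the graded pieces $G_n/G_{n+1}\subseteq M_r(\frako/p\frako)$; the paper invokes Proposition \ref{dicht} without comment. (The paper's proof of Proposition \ref{dicht} never actually uses finite generation --- the injection $G_n/G_{n+n_0}\hookrightarrow G_{n+n_0}/G_{n+2n_0}$ between groups of equal order $p^{n_0k}$ is all that is needed --- so the hypothesis could simply be dropped there; but your argument is a valid alternative repair, and it has the bonus of yielding $d(G_N)=k$, which you then use to identify the dimension, a point the paper leaves implicit.) Second, for $(1)\Rightarrow(3)$ the paper argues that the open pro-$p$ subgroups $G_n$ have rank $k$, whence $k_n\le k$ ``with equality for almost all $n$'' --- the equality is asserted rather than proved. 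Your route via the lattice $\Lambda=\frakg\cap M_r(\frako)$ and the mutually inverse bijections $\exp$ and $\log$ between $p^n\Lambda$ and $G_n$ actually establishes $k_n=k$ for large $n$, at the price of invoking the exponential/BCH machinery that the paper only sets up later for Lemma \ref{lie}. Two cosmetic points: your appeal to Proposition \ref{dicht} yields $\overline{G_n^p}=G_{n+1}$ only for $n$ beyond the threshold $N$ where $k_n$ stabilizes, not for all $n\ge 2$ (this is all you use anyway); and for $p=2$ powerfulness requires $G_N/\overline{G_N^4}$ abelian, which your $n_0=2$ remark supplies via $\overline{G_N^{p^2}}=G_{N+2}$, exactly as in the paper's own argument.
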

  
\begin{proof}
First, let $G$ be $p$-adic analytic. Then there is some $n_0 \in \bbN$ such that the open pro-$p$ subgroups $G_n$ for $n > n_0$ are of finite rank $k$ due to Theorem \ref{lazard}, this means $$\mbox{sup}\{\mbox{rk}(G_n/N) \; ; \; N \subseteq G_n \mbox{ open normal subgroups }\} = k.$$ In particular, the quotient $G_n/G_{n+1} = (\bbZ/p)^{k_n}$ is of rank $\leq k$ and hence $k_n \leq k$, with equality for almost all $n$. This shows $1 \Rightarrow 2.$

The equivalence $2 \Leftrightarrow 3$ is obvious because of the short exact sequence $$0 \longto (\bbZ/p)^{k_n} \longto G/G_{n+1} \longto G/G_n \longto 1$$ and the inequality $k_{n+1} \geq k_n$.

The implication $3 \Rightarrow 1$ follows from Proposition \ref{dicht}. The group $G_{n_0}$ with $k_{n_0} = k$ satisfies the equation $\overline{G_{n_0}^{p^2}} = G_{n_0+2}$. Hence for $n_0 \geq 2$ the quotient $G_{n_0}/\overline{G_{n_0}^{p^2}} = G_{n_0}/G_{n_0+2} = (\bbZ/p^2)^k$ is an abelian group and $G_{n_0}$ an open, powerful and topologically finitely generated pro-$p$ subgroup. The group $G$ is $p$-adic analytic due to Theorem \ref{lazard} and has rank $k$.
\end{proof}

Corollary \ref{equi} gives a sufficient criterion for $G_{\rho_E}$ being a $p$-adic analytic group where one only has to count the degree of certain trivializing coverings of $X$.

\begin{cor} \label{Laza}
 Let $E$ be a vector bundle with potentially strongly semistable reduction of degree $0$ on the curve $X_{\bbC_p}$.
Suppose there is a model $\frakX$ of $X$ and a vector bundle $\calE$ on $\frakX_{\frako}$ such that for all $n \in \bbN$ there exists a morphism $\pi: \mathcal{Y} \to \frakX$ in the category $\mathcal{S}^{good}_{\frakX, D}$ with the properties \begin{enumerate} \item $\pi^*_n(\calE_n)$ is the trivial bundle \item The morphism $\pi_{\overline{\bbQ}_p}: Y := \mathcal{Y} \tensor \overline{\bbQ}_p \to X$ of the generic fibres is of degree $\leq cp^{kn}$ with constants $c, k \in \bbN$ that are independent from $n$. \end{enumerate} 
Then the group $G_{\rho_E}$ is a $p$-adic analytic group. 
\end{cor}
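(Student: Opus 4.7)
The approach is to apply Corollary \ref{equi}: the hypothesis will translate into a polynomial bound $[G : G_n] \leq c p^{kn}$ for $G := G_{\rho_E}$, and combined with the monotonicity $k_{n+1} \geq k_n$ of Proposition \ref{struktur}, this will force the sequence $(k_n)$ to stabilize, allowing Corollary \ref{equi} to identify $G$ as $p$-adic analytic.

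First, by Proposition \ref{prop2} one can realize $G$ as a compact subgroup of $\mathrm{GL}_r(\frako)$ after a suitable basis change of $E_x$, so the filtration $G_n := G \cap \ker(\mathrm{GL}_r(\frako) \to \mathrm{GL}_r(\frako_n))$ of Proposition \ref{struktur} is available. The quotient $G/G_n$ agrees, via the surjection $\pi_1(X \setminus D, x) \onto \pi_1(X,x)$, with the image of the finite-valued representation $\rho_{\calE, n} : \pi_1(X \setminus D, x) \to \mathrm{GL}_r(\frako_n)$ constructed in the proof of Proposition \ref{prop2} via the formula $\rho_{\calE, n}(\gamma) = (\gamma y_n)^* \circ (y_n^*)^{-1}$. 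I now exploit the hypothesis: for this $n$ pick the trivializing covering $\pi : \mathcal{Y} \to \frakX$ in $\mathcal{S}^{good}_{\frakX, D}$ produced by the assumptions; since $\pi_n^*\calE_n$ is the trivial bundle, the value $\rho_{\calE, n}(\gamma)$ depends only on the point $\gamma \cdot y_n$ in the fibre of $\pi_n$ over $x_n$, and the cardinality of that fibre is bounded by the generic degree $\deg(\pi_{\overline{\bbQ}_p}) \leq c p^{kn}$. Consequently $[G : G_n] \leq c p^{kn}$ for every $n$.

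Feeding this into Proposition \ref{struktur}, which yields $[G : G_n] = [G : G_1] \cdot p^{k_1 + \dots + k_{n-1}}$, the bound gives $\sum_{m=1}^{n-1} k_m \leq k n + O(1)$. Since $(k_m)$ is non-decreasing (as recorded in the proof of Corollary \ref{equi}), it must be bounded and therefore eventually constant at some value $k' \leq k$. Corollary \ref{equi} then identifies $G$ as a $p$-adic analytic group of dimension $k'$.

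The main technical obstacle is justifying the bound $|\mathrm{im}(\rho_{\calE, n})| \leq \deg(\pi_{\overline{\bbQ}_p})$: one needs to check carefully that as $\gamma$ ranges over $\pi_1(X \setminus D, x)$ the number of distinct values of $\gamma \cdot y_n$ in the reduction $\mathcal{Y}_n$ does not exceed the number of $\bbC_p$-points of the generic fibre $\pi_{\overline{\bbQ}_p}^{-1}(x)$, despite possible coincidences arising when one reduces modulo $p^n$. Once this point is settled, the rest is a formal combination of the construction of \cite{dw2} with Lazard's structure theory as packaged in Corollary \ref{equi}.
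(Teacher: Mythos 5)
Your proposal is correct and follows essentially the same route as the paper: bound the finite quotients $\#(G/G_n)$ by the degree of the trivializing covering (the paper packages this counting step as $\mathrm{im}(\rho_{\calE,n})$ being a quotient of $\mbox{Aut}_{X\backslash D}(Y\backslash \pi_{\overline{\bbQ}_p}^*D)$, which embeds into the fibre over $x$ since $Y\backslash \pi_{\overline{\bbQ}_p}^*D$ is connected, rather than via the formula $\rho_{\calE,n}(\gamma)=(\gamma y_n)^*\circ(y_n^*)^{-1}$ directly, but the count is the same) and then invoke Corollary \ref{equi}. The ``technical obstacle'' you flag is not actually one, since $\gamma y_n$ is the reduction of the $\bbC_p$-point $\gamma y$ of the generic fibre, so coincidences under reduction can only decrease the count.
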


\begin{proof}
Due to the construction of the representations $\rho_E$ and $\rho_{\calE}$ in \cite{dw2} we have $G_{\rho_E} \iso G_{\rho_{\calE}} \subset \mbox{GL}(\calE_{x_{\frako}})$ and the subgroup $G_n := G_{\rho_{\calE, n}}$ is a quotient of $\mbox{Aut}_{X\backslash D}(Y\backslash \pi_{\overline{\bbQ}_p}^*D).$ Since $Y\backslash \pi_{\overline{\bbQ}_p}^*D$ is connected because of the definition of $\mathcal{S}^{good}_{\frakX, D}$, we have $$\mbox{Aut}_{X\backslash D}(Y \backslash \pi_{\overline{\bbQ}_p}^*D) \subseteq \mbox{Hom}_{X\backslash D}(x,Y\backslash \pi_{\overline{\bbQ}_p}^*D).$$ Then $\#G_n \leq cp^{kn}$ for all $n \in \bbN$, and the corollary above implies that $G_{\rho_E}$ is a $p$-adic analytic group.
\end{proof}
  
Despite the fact that the conditions above are hard to check, one knows lots of nontrivial examples where the image $G_{\rho}$ of a continuous morphism $\rho: G \to \mbox{GL}_r(\bbC_p)$, with $G$ a compact group, is $p$-adic analytic. First, if $G$ is topologically finitely generated and solvable, it is easy to see that it has to be a $p$-adic analytic group. Namely, there is a sequence of closed normal subgroups $G_{\rho} = H_n \supseteq H_{n-1} \supseteq \dots \supseteq H_0 = \{1\}$ with abelian quotients $H_i/H_{i-1}$. The subgroups $H_i$ contain an open and topologically finitely generated pro-$p$ subgroup, and so do the quotient groups $H_i/H_{i-1}$, see \cite{dix}, Proposition 1.11. Since the latter are also abelian, they are in particular powerful and hence $p$-adic analytic. Then $G_{\rho}$ is $p$-adic analytic as successive extension of such groups. In particular, if a vector bundle $E$ on the curve $X_{\bbC_p}$ is a successive extension of line bundles of degree $0$, it has potentially strongly semistable reduction, see Theorem \ref{hauptthm}, and the associated group $G_{\rho_E}$ is solvable, hence $p$-adic analytic. \smallskip

Second, $G_{\rho}$ is $p$-adic analytic if it lies in $\mbox{GL}_r(\overline{\bbQ}_p)$, because in this case it is already defined over a finite field extension $K \supseteq \bbQ_p$, see for example \cite{kat2}, Remark 9.0.7, and $\mbox{GL}_r(K)$ is a p-adic analytic group. In the sequel we will show that for certain polystable vector bundles $E$ as above this is in fact also necessary for $G_{\rho_E}$ to be a $p$-adic analytic group. To achieve this goal, we have to take a closer look at the Lie algebra of $G_{\rho_E}$. \medskip
     
A pro-$p$ group $U$ of finite rank is called uniform if every powerful open subgroup $H \subseteq U$ satisfies $d(H) = d(U)$. From now on $U$ always denotes a uniform pro-$p$ group. We will briefly describe how to attach a Lie algebra to $U$, see for example \cite{dix}. Consider the $\bbQ_p$-algebra $A:= \bbQ_p[U]$. It carries a norm which is compatible with the topologies of $\bbQ_p$ and $U$, i.e. $||\lambda a|| \leq |\lambda|\cdot ||a||$ for $\lambda \in \bbQ_p$ and $a \in A$ and $||g-1||\leq p^{-n}$ for $g \in U_n, \;n \geq 1$ (see \cite{dix}, page 158 and Definition 8.12). Let $\hat{A}$ be its completion with respect to this norm. Furthermore, we define \begin{eqnarray*} \hat{A}_0 = \{x \in \hat{A} \; ; \; ||x||\leq p^{-1}\} \mbox{ for } p \neq 2 \\ \hat{A}_0 = \{x \in \hat{A} \; ; \; ||x|| \leq 2^{-2}\} \mbox{ for } p = 2\end{eqnarray*} The series $\mbox{log}(1+x):=\sum_{n = 1}^{\infty}(-1)^{n+1}\frac{x^n}{n}$ and $\mbox{exp}(x):=\sum_{n = 0}^{\infty}\frac{x^n}{n!}$ are defined on all of $\hat{A}_0$ and are inverse to each other. The natural Lie bracket $[x,y] := xy-yx$ gives a Lie algebra structure on $\hat{A}$, and one has $(U-1) \subseteq \hat{A}_0$ for $ p>2$ resp. $(U_2-1) \subseteq \hat{A}_0$ for $p=2$. Define $\Lambda := \mbox{log}(U)$ for $p \neq 2$ resp. $\Lambda := \mbox{log}(U^2)$ for $p =2$, which are $\bbZ_p$-Lie subalgebras of $\hat{A}$, see \cite{dix}, Corollary 8.15. \smallskip 

Now let $G$ be a $p$-adic analytic group. It contains a uniform pro-$p$ group $U$ (see \cite{dix}, Theorem 3.13), and one defines the Lie algebra $\frakg := \bbQ_p \tensor_{\bbZ_p}\Lambda(U).$ One can show that this definition is independent from a choice of the uniform subgroup $U$. If $f:G \to G'$ is a morphism of $p$-adic analytic groups and $U' \subseteq G'$ an open, uniform subgroup, one finds an open, uniform subgroup $U \subseteq f^{-1}(U')$. The restriction of the morphism $f|_{U}: U \to U'$ induces a morphism of Lie algebras $f^*: \hat{A}_0(U) \to \hat{A}_0(U')$ that restricts to a morphism $\Lambda(U) \to \Lambda(U')$. Finally, the $\bbQ_p$-linear continuation $f^*:\frakg(U) \to \frakg(U')$ gives a full, essentially surjective functor from the category of $p$-adic analytic groups to the category of finite-dimensional $\bbQ_p$-Lie algebras. For $U_0 := \mbox{ker}(\mbox{GL}_r(\frako) \to \mbox{GL}_r(\frako/p^{\frac{1}{p-1}}))$ there is the following lemma.
   
\begin{lemma} \label{lie}
Let $G \into U_0 \subset\mbox{GL}_r(\frako)$ be a continuous and faithful representation. The  $p$-adic logarithm $\mbox{log}_p: U_0 \into M_r(\bbC_p)$ induces an isomorphism of $\bbZ_p$-Lie algebras $\Lambda \To{\sim} \mbox{log}_p(G).$
\end{lemma}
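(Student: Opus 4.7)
The idea is to extend the faithful embedding $\rho\colon G\hookrightarrow M_r(\bbC_p)$ to the completed group algebra $\hat A=\widehat{\bbQ_p[G]}$ and exploit that the two logarithms in question -- the abstract one that defines $\Lambda$ and the matrix logarithm $\log_p$ -- are given by one and the same formal power series.

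First I would extend $\rho$ to $\hat A$. Since $G\subseteq U_0$, every $g\in G$ satisfies $|g-1|\le p^{-1/(p-1)}$, so $\rho$ sends $G-1$ into the region of $M_r(\bbC_p)$ on which the matrix logarithm converges. In particular $\rho$ is bounded with respect to the norm used to build $\hat A$, and since $M_r(\bbC_p)$ is a complete normed $\bbQ_p$-algebra, $\rho$ extends by continuity to a $\bbQ_p$-algebra homomorphism $\hat\rho\colon \hat A\to M_r(\bbC_p)$ that maps $\hat A_0$ into the convergence region of $\log_p$.

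Next I would use that any continuous $\bbQ_p$-algebra homomorphism intertwines convergent power series. Applied to $\hat\rho$ and $\log(1+x)=\sum_{n\ge 1}(-1)^{n+1}x^n/n$, this yields $\hat\rho(\log_{\hat A}(g))=\log_p(\rho(g))$ for every $g\in G$ (respectively $g\in G^2$ in the case $p=2$), so $\hat\rho$ restricts to a $\bbZ_p$-linear map $\Lambda\to\log_p(G)$ which is surjective by construction. Injectivity comes from a short chase: $\log_p\circ\rho$ is the composition of the injection $\rho$ with the bijection $\log_p\colon U_0\isoto\log_p(U_0)$, while $\log_{\hat A}$ is a bijection from $G$ (respectively $G^2$) onto $\Lambda$, so $\hat\rho|_\Lambda$ must be injective as well. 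Compatibility with the Lie bracket is then automatic, because both brackets arise as associative commutators in their respective algebras and $\hat\rho$ is an associative algebra homomorphism; this also shows a posteriori that $\log_p(G)\subseteq M_r(\bbC_p)$ is closed under the matrix commutator.

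The main technical point, in my view, is verifying the norm estimates carefully enough to guarantee that $\hat\rho$ really sends $\hat A_0$ into the correct convergence region for the matrix logarithm, and that the series identity $\hat\rho\circ\log_{\hat A}=\log_p\circ\rho$ is valid on the uniform subgroup used to define $\Lambda$. Once this compatibility is in place, the remaining ingredients -- injectivity of $\rho$, bijectivity of $\log_{\hat A}$ on the uniform group, and bijectivity of $\log_p$ on $U_0$ -- combine mechanically to give the desired isomorphism of $\bbZ_p$-Lie algebras $\Lambda\isoto\log_p(G)$.
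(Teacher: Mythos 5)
Your argument is essentially the paper's own proof, only written out in more detail: the paper likewise observes that the matrix logarithm $\mbox{log}_p$ on $U_0$ is given by the same convergent series as $\mbox{log}$ on $\hat A_0$ and that both Lie brackets are associative commutators, so that $\Lambda \Longto{\mbox{exp}} G \Longto{\mbox{log}_p} M_r(\bbC_p)$ is an isomorphism onto $\mbox{log}_p(G)$. Your explicit extension of the representation to a continuous algebra homomorphism $\hat\rho\colon\hat A\to M_r(\bbC_p)$ is exactly the step the paper leaves implicit, and the norm estimates you flag are supplied by the hypothesis $G\into U_0$; so this is the same route, carried out correctly.
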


\begin{proof}
Consider the diagram $$\xymatrix{G \subset \hat{A}_0 \ar@<0.7ex>[r]^-{log} \ar@<-3ex>[d]_{log_p} & \hat{A}_0 \ar@<1ex>[l]^-{exp} \\ M_r(\bbC_p). & }$$ Since the $p$-adic logarithm $\mbox{log}_p$ on $U_0$ coincides with $\mbox{log}$ on $\hat{A}_0$ and since the Lie bracket on $M_r(\bbC_p)$ coincides with that on $\hat{A}_0$, one sees at once that $$\Lambda \Longto{\mbox{exp}} G \Longto{\mbox{log}_p} M_r(\bbC_p)$$ gives an isomorphism of Lie algebras $\Lambda \To{\sim} \mbox{log}_pG \subseteq M_r(\bbC_p)$.
\end{proof}

Now let $\rho: G \to \mbox{GL}(V)$ be as above a continuous, faithful representation of a $p$-adic analytic group $G$ and let $V$ be a finite-dimensional vector space. We will describe how the Lie algebras of $G$ and its Zariski closure $\overline{G}$ are related. For this, let $\frakg$ and $\overline{\frakg}$ be the Lie algebras of $G$ and $\overline{G}$. The Lie algebra $\frakg$ contains a maximal solvable ideal $r(\frakg)$, the radical of $\frakg$, and the quotient $\frakg^{ss} := \frakg/r(\frakg)$ is a semisimple Lie algebra. With Lemma \ref{lie} we obtain a faithful representation $\frakg \into \mbox{End}_{\bbC_p}(V) \iso M_r(\bbC_p)$. We denote by $\frakg({\bbC_p})$ the $\bbC_p$-linear continuation of $\frakg$ in $\mbox{End}_{\bbC_p}(V)$ and show that one gets embeddings  $r(\frakg) \into r(\frakg({\bbC_p}))$ and $\frakg^{ss} \into\frakg({\bbC_p})^{ss}$. \smallskip

 Consider the short exact sequence of Lie algebras that arises from tensoring with $\bbC_p$: $0 \to r(\frakg \tensor \bbC_p) \to \frakg \tensor \bbC_p \to (\frakg \tensor \bbC_p)^{ss} \to 0.$ It follows from \cite{bour1}, Chapitre I, \S5.6 that $r(\frakg \tensor \bbC_p) = r(\frakg) \tensor \bbC_p$ and hence $(\frakg \tensor \bbC_p)^{ss} = \frakg^{ss} \tensor \bbC_p$. The image of $r(\frakg)\tensor \bbC_p$ in $\frakg({\bbC_p})$ is a solvable subalgebra and even an ideal because of the surjection $\frakg \tensor \bbC_p \onto \frakg({\bbC_p})$. Hence the radical of $\frakg \tensor \bbC_p$ is mapped onto the radical of $\frakg({\bbC_p})$ and we find $\frakg \cap r(\frakg({\bbC_p})) \supseteq r(\frakg)$. Since this intersection is a solvable ideal in $\frakg$, this yields \begin{eqnarray} \frakg \cap r(\frakg({\bbC_p})) = r(\frakg). \label{radikal} \end{eqnarray} Hence $r(\frakg)\tensor \bbC_p \to r(\frakg({\bbC_p}))$ is surjective  and we obtain the commutative diagram with exact rows \begin{equation} \xymatrix{0 \ar[r] & r(\frakg) \tensor \bbC_p \ar[r] \ar@{>>}[d] & \frakg \tensor \bbC_p \ar[r] \ar@{>>}[d] & \frakg^{ss} \tensor \bbC_p \ar[r] \ar@{>>}[d] & 0 \\ 0 \ar[r] & r(\frakg({\bbC_p})) \ar[r] & \frakg({\bbC_p}) \ar[r] & \frakg({\bbC_p})^{ss} \ar[r] & 0 .}\label{dia1}\end{equation} The surjectivity of the right vertical morphism follows at once from the surjectivity of the middle one. There also is the commutative diagram with exact rows and injective vertical morphisms \begin{equation} \xymatrix{0 \ar[r] & r(\frakg) \ar[r] \ar@{^{(}->}[d] & \frakg \ar[r] \ar@{^{(}->}[d] & \frakg^{ss}  \ar[r] \ar@{^{(}->}[d] & 0 \\ 0 \ar[r] & r(\frakg({\bbC_p})) \ar[r] & \frakg({\bbC_p}) \ar[r] & \frakg({\bbC_p})^{ss} \ar[r] & 0 .} \label{dia2} \end{equation} The injectivity on the right follows again from  (\ref{radikal}).

\begin{cor} \label{genaudann}
The Lie algebra $\frakg$ is solvable resp. semisimple if and only if $\frakg({\bbC_p})$ is solvable resp. semisimple.
\end{cor}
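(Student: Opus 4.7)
The corollary is essentially a direct consequence of the two commutative diagrams (\ref{dia1}) and (\ref{dia2}) already assembled in the text, so the plan is to read off both directions from those diagrams using the characterisations ``$\frakh$ is solvable $\iff \frakh^{ss} = 0$'' and ``$\frakh$ is semisimple $\iff r(\frakh)=0$''.

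First I would treat solvability. Assume $\frakg$ is solvable, so $\frakg^{ss}=0$. Looking at the right-hand column of diagram (\ref{dia1}), the map $\frakg^{ss}\tensor\bbC_p\onto\frakg(\bbC_p)^{ss}$ is surjective, so its target is zero and $\frakg(\bbC_p)$ is solvable. Conversely, assume $\frakg(\bbC_p)$ is solvable, so $\frakg(\bbC_p)^{ss}=0$. Then the right-hand column of diagram (\ref{dia2}), which is an injection $\frakg^{ss}\into\frakg(\bbC_p)^{ss}$, forces $\frakg^{ss}=0$, and hence $\frakg$ is solvable.

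Next I would treat semisimplicity, which is the dual bookkeeping with the radicals. Assume $\frakg$ is semisimple, i.e.\ $r(\frakg)=0$. The left-hand column of (\ref{dia1}), the surjection $r(\frakg)\tensor\bbC_p\onto r(\frakg(\bbC_p))$, immediately gives $r(\frakg(\bbC_p))=0$, so $\frakg(\bbC_p)$ is semisimple. Conversely, assume $r(\frakg(\bbC_p))=0$; then the left-hand column of (\ref{dia2}), the injection $r(\frakg)\into r(\frakg(\bbC_p))$, yields $r(\frakg)=0$, and $\frakg$ is semisimple.

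There is no real obstacle here: the genuine content sits in the two auxiliary facts established just before the corollary, namely the equality $r(\frakg\tensor\bbC_p)=r(\frakg)\tensor\bbC_p$ from \cite{bour1} (used for the surjectivity of the vertical maps in (\ref{dia1})) and the identity $\frakg\cap r(\frakg(\bbC_p))=r(\frakg)$ from~(\ref{radikal}) (used for the injectivity of the vertical maps in (\ref{dia2})). Once these are in hand, the corollary is purely diagrammatic and requires nothing beyond chasing the two columns in each of the two diagrams.
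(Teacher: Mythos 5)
Your proof is correct and follows exactly the route the paper intends: the paper's own proof consists of the single remark that both assertions ``follow at once from the commutative diagrams above,'' and your diagram chase through the surjections in (\ref{dia1}) and the injections in (\ref{dia2}), using the characterisations via $\frakg^{ss}$ and $r(\frakg)$, is precisely the intended argument spelled out. Nothing is missing.
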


\begin{proof}
Both assertions follow at once from the commutative diagrams above.
\end{proof}

For a subset $M \subseteq \mbox{End}_{\bbC_p}(V)$ we denote by $a(M)$ the smallest algebraic Lie algebra containing $M$. Obviously the Lie algebra of the algebraic group $\overline{G}$ equals $a(\frakg)$ with inclusions $\frakg \subseteq \frakg({\bbC_p}) \subseteq a(\frakg) = \overline{\frakg}$.

\begin{prop} \label{algebr}
Let $\rho$ as above be a semisimple representation of rank $\geq 2$ with trivial determinant. Then $\frakg \subseteq \frakg({\bbC_p}) = a(\frakg) = \overline{\frakg}.$
\end{prop}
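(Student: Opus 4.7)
The inclusion $\frakg \subseteq \frakg({\bbC_p})$ is immediate from the definition of the $\bbC_p$-linear extension, and the equality $a(\frakg) = \overline{\frakg}$ is the standard identification of the algebraic hull of $\frakg$ in $\mbox{End}(V)$ with the Lie algebra of its Zariski closure in $\mathbf{GL}(V)$. Since $\frakg({\bbC_p})$ is the $\bbC_p$-span of $\frakg$ and sits inside the $\bbC_p$-closed algebraic subalgebra $a(\frakg)$, the inclusion $\frakg({\bbC_p}) \subseteq a(\frakg)$ is automatic. Consequently the content of the proposition is to show that $\frakg({\bbC_p})$ is itself an algebraic Lie subalgebra of $\mbox{gl}(V)$.

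My overall strategy is to reduce to the case where $\overline{\frakg}$ is semisimple and then to invoke the classical fact that every semisimple Lie subalgebra of $\mbox{gl}(V)$ in characteristic zero is automatically algebraic. For the reduction I would argue as follows. Semisimplicity of $\rho$ and characteristic zero give, via Proposition \ref{reduktiv}, that $\overline{G}^0$ is reductive, so $\overline{\frakg} = Z(\overline{\frakg}) \oplus [\overline{\frakg},\overline{\frakg}]$. The trivial-determinant hypothesis places $\overline{\frakg} \subseteq \mbox{sl}(V)$, and by decomposing $V$ into $\overline{G}^0$-isotypic pieces and applying Schur's lemma together with the trace-zero constraint and the rank-$\geq 2$ assumption, one forces $Z(\overline{G}^0)^0 = \{1\}$, making $\overline{\frakg}$ semisimple.

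Corollary \ref{genaudann} then reduces the goal to showing $\frakg$ itself is semisimple, equivalently $r(\frakg) = 0$. The semisimplicity of $V$ as $G$-module passes via the infinitesimal correspondence for $p$-adic analytic groups to $V$ being a semisimple $\frakg({\bbC_p})$-module, so the radical $r(\frakg)$, and by the surjection in diagram (\ref{dia1}) also $r(\frakg({\bbC_p}))$, acts on $V$ by commuting diagonalizable matrices and is therefore a toral Lie subalgebra. Viewing this toral radical as sitting inside the now-semisimple $\overline{\frakg}$, a density argument — a Zariski-dense $p$-adic analytic subgroup of a semisimple algebraic group over $\bbC_p$ has Lie algebra of full dimension — then forces $r(\frakg({\bbC_p})) = 0$, and with the surjection $r(\frakg) \tensor \bbC_p \twoheadrightarrow r(\frakg({\bbC_p}))$ together with Corollary \ref{genaudann} one concludes $r(\frakg) = 0$.

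With $\frakg$ semisimple, Corollary \ref{genaudann} yields $\frakg({\bbC_p})$ semisimple, hence algebraic by the classical theorem on semisimple Lie subalgebras of $\mbox{gl}(V)$, so $\frakg({\bbC_p}) = a(\frakg) = \overline{\frakg}$ as required. The principal obstacle is the density step forcing $r(\frakg({\bbC_p})) = 0$: for a merely reductive $\overline{\frakg}$ the corresponding statement genuinely fails (toral subalgebras of a torus need not be algebraic), so the reduction to semisimple $\overline{\frakg}$ via the trivial-determinant and rank-$\geq 2$ hypotheses is indispensable, and in the possibly reducible polystable setting the Schur-plus-trace analysis — showing that the central torus can act on all of the isotypic summands only through characters of finite order — is the key technical ingredient.
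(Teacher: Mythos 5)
Your reduction hinges on the claim that Schur's lemma together with the trace-zero constraint and $\dim V\geq 2$ forces $Z(\overline{G}^0)^0=\{1\}$, and this step fails as soon as $V$ is reducible. Schur's lemma only says that the connected centre acts on each isotypic summand by a character, and triviality of the determinant imposes a single linear relation among those characters --- nowhere near enough to make the centre finite. Concretely, take $u\in 1+p\bbZ_p$ of infinite order, $\lambda\in\bbZ_p$ irrational, and let $G\iso\bbZ_p$ act on $V=\bbC_p^3$ by $t\mapsto \mathrm{diag}(u^{t},u^{\lambda t},u^{-(1+\lambda)t})$. This is a continuous, faithful, semisimple representation of a compact $p$-adic analytic group, of rank $3$ and with trivial determinant; yet $\overline{G}$ is the full two-dimensional torus $\{\mathrm{diag}(a,b,c):abc=1\}$ (no nontrivial character of that torus kills $G$ since $\lambda\notin\bbQ$), while $\frakg(\bbC_p)$ is the line spanned by $\mathrm{diag}(1,\lambda,-1-\lambda)$. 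So neither $Z(\overline{G}^0)^0=\{1\}$ nor the conclusion $\frakg(\bbC_p)=a(\frakg)$ holds. The hypotheses as you read them do not suffice for the reduction to the semisimple case; what actually rescues the statement in its intended use (Corollary \ref{corollary10}) is that there one knows in addition, via Propositions \ref{gleich} and \ref{halbeinfach}, that $\overline{G}_{\rho_E}=G_E$ is a semisimple group --- an input that ``semisimple of rank $\geq 2$ with trivial determinant'' alone does not supply. (The paper's own proof makes essentially the same jump, asserting that a semisimple trace-zero module forces $\frakg(\bbC_p)$ to be semisimple; the difficulty is concentrated in exactly the same place in both arguments, and is genuinely absent only in the irreducible case, where Schur plus $\det=1$ does bound the centre.)

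Setting that aside, your route is also more roundabout than necessary and relies on a second unproved assertion. The paper never passes through $\overline{G}$: it argues directly that $\frakg(\bbC_p)$ is semisimple, hence perfect, and then quotes Borel (Corollary 7.9) that the derived algebra $[\frakg(\bbC_p),\frakg(\bbC_p)]$ is algebraic, giving $\frakg(\bbC_p)=a(\frakg)$ in one line. Your substitute --- ``a Zariski-dense $p$-adic analytic subgroup of a semisimple algebraic group over $\bbC_p$ has Lie algebra of full dimension'' --- is false as stated (discrete Zariski-dense subgroups are $p$-adic analytic of dimension $0$), and the correct justification in the compact case is precisely the algebraicity statement you are trying to bypass: $\overline{G}$ normalizes $\frakg(\bbC_p)$, so the latter is an ideal of $\overline{\frakg}$, hence semisimple if $\overline{\frakg}$ is, hence perfect and algebraic by Chevalley--Borel, hence contains $a(\frakg)$. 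I would restructure accordingly: assume (or first prove, using irreducibility where it is available) that $\frakg(\bbC_p)$ is semisimple, and conclude with the theorem on derived algebras; the detour through $\overline{\frakg}$ and the density heuristic adds nothing and conceals the actual gap.
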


\begin{proof}
The condition implies that the $\frakg$-module $V$ is semisimple with vanishing trace, hence this also holds for $V$ considered as $\frakg({\bbC_p})$-module. So the Lie algebra  $\frakg({\bbC_p})$ has to be semisimple as well. Due to \cite{bor}, Corollary 7.9, the derived Lie algebra $[\frakg({\bbC_p}),\frakg({\bbC_p})]$ is algebraic. But since $\frakg({\bbC_p})$ is semisimple, we have $[\frakg({\bbC_p}),\frakg({\bbC_p})] = \frakg({\bbC_p})$. Hence $a(\frakg) = \frakg({\bbC_p})$.
\end{proof}

Applying this for polystable vector bundles $E$ with potentially strongly semistable reduction, assuming that the image $G_{\rho_E}$ is $p$-adic analytic, and denoting the Lie algebra of $G_{\rho_E}$ with $\frakg$ and the Lie algebra of the algebraic monodromy  group $G_E$ with $\frakg_E$, we obtain the result:

\begin{cor} \label{corollary10}
Let $E$ be a polystable vector bundle with potentially strongly semistable reduction of degree $0$ and rank $\geq 2$, with trivial determinant bundle and such that the associated representation $\rho_E$ is semisimple and has a $p$-adic analytic image. Then  $\frakg \subseteq \frakg({\bbC_p}) = a(\frakg) = \frakg_E.$ Furthermore, the Lie algebra $\frakg$ is semisimple and $\frakg_E$ is a direct summand of $\frakg_{\bbC_p} := \frakg \tensor \bbC_p$. 
\end{cor}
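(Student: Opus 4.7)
The plan is to assemble the claim from pieces already established in the preceding material; almost nothing new is required. First, the hypotheses match exactly those of Proposition \ref{algebr}: $\rho_E$ is a semisimple representation of rank $\geq 2$ with trivial determinant. That proposition then yields the chain $\frakg \subseteq \frakg(\bbC_p) = a(\frakg) = \overline{\frakg}$. To identify $\overline{\frakg}$ with $\frakg_E$, I invoke Proposition \ref{gleich}: since $E$ is polystable of degree $0$ with potentially strongly semistable reduction and $\rho_E$ is semisimple, we have $\overline{G}_{\rho_E} = G_E$ as closed subgroups of $\mathbf{GL}_{E_x}$, and passing to Lie algebras gives $\overline{\frakg} = \frakg_E$. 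Concatenating, this produces the string of inclusions/equalities $\frakg \subseteq \frakg(\bbC_p) = a(\frakg) = \frakg_E$.

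Next I treat the semisimplicity of $\frakg$. The proof of Proposition \ref{algebr} established that $\frakg(\bbC_p)$ is semisimple (because the faithful $\frakg(\bbC_p)$-module $V$ is semisimple with vanishing trace, using that $\rho_E$ is semisimple with trivial determinant). Corollary \ref{genaudann} then transports this semisimplicity back to $\frakg$.

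Finally, for the direct summand statement, consider the surjection of $\bbC_p$-Lie algebras $\frakg_{\bbC_p} = \frakg \tensor \bbC_p \twoheadrightarrow \frakg(\bbC_p) = \frakg_E$ coming from the $\bbC_p$-linear continuation (the middle vertical map in diagram (\ref{dia1})). Since $\frakg$ is semisimple, so is its scalar extension $\frakg_{\bbC_p}$, and therefore $\frakg_{\bbC_p}$ decomposes as a direct sum of its simple ideals. Any ideal of a semisimple Lie algebra is a direct sum of some of these simple ideals and admits a complementary ideal; applying this to the kernel of the surjection above realizes $\frakg_E$ as a direct summand of $\frakg_{\bbC_p}$, completing the proof.

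There is no real obstacle here: the corollary is essentially a bookkeeping consequence of Proposition \ref{algebr}, Proposition \ref{gleich}, Corollary \ref{genaudann}, and the elementary fact that ideals in a semisimple Lie algebra are direct summands. The only small point worth checking carefully is that $\rho_E$ being semisimple really does force $V$ to be a semisimple $\frakg(\bbC_p)$-module (and hence that the vanishing-trace/semisimplicity argument of Proposition \ref{algebr} applies verbatim), but this is precisely what was used in the proof of that proposition.
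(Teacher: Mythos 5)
Your proposal is correct and follows essentially the same route as the paper: the paper's own proof likewise combines Proposition \ref{gleich} (together with Proposition \ref{halbeinfach}) to identify $\frakg_E$ with $a(\frakg)$, invokes Proposition \ref{algebr} for the chain $\frakg \subseteq \frakg(\bbC_p) = a(\frakg) = \frakg_E$, and uses Corollary \ref{genaudann} plus the surjection $\frakg_{\bbC_p} \onto \frakg(\bbC_p)$ for the semisimplicity and direct-summand claims. The only cosmetic difference is that you reach $\overline{\frakg} = \frakg_E$ directly from Proposition \ref{gleich} rather than also citing Proposition \ref{halbeinfach}, which changes nothing of substance.
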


\begin{proof}
It follows from Proposition \ref{gleich} and \ref{halbeinfach} that $G_E$ is a semisimple algebraic group and that its Lie algebra is the smallest algebraic Lie algebra containing $\frakg$. The first assertion can be derived from Proposition \ref{algebr}, and due to Corollary \ref{genaudann} the Lie algebra $\frakg$ is semisimple, and so is $\frakg_{\bbC_p}$ with a surjection $\frakg_{\bbC_p} \onto \frakg({\bbC_p})$.    
\end{proof}

We want to answer the question under which conditions the image $G_{\rho_E}$ is algebraic, i.e. such that there exists a change of basis of $E_x$ with $G_{\rho_E} \subset \mbox{GL}_r(\overline{\bbQ}_p)$. Recall that $G_{\rho_E}$ is a $p$-adic analytic group in this case. Consider a  semisimple $\bbQ_p$-Lie algebra $\frakg$ and fix a Cartan subalgebra $\mathfrak{h}$ of $\frakg$. There is a finite field extension $L \supseteq \bbQ_p$ such that $\frakg_L$ is split, that is the endomorphisms $\mbox{ad}_{\frakg_L}(h)$ are simultaneously diagonalizable  for all  $h \in \mathfrak{h}_L$. If the $\bbQ_p$-Lie algebra $\frakg$ is embedded into $\mbox{End}_{\bbC_p}(V)$, we again denote by $\frakg(L)$ for $L \supseteq \bbQ_p$ the $L$-Lie algebra in $\mbox{End}_{\bbC_p}(V)$ generated by $\frakg$, which again is a split semisimple Lie algebra. This endowes $V$ with an action of $\frakg(L)$, and the associated $\frakg(L) \tensor_L \bbC_p$-module $V$ is defined by a highest dominant weight $\lambda$. But $\frakg(L)$ and $\frakg(L) \tensor_L \bbC_p$ have the same root system, and the weight $\lambda$ also corresponds to a $\frakg(L)$-action on some $L$-vector space $W$. In particular, there is a $\frakg(L)$-equivariant isomorphism $V \iso W \tensor_L \bbC_p$. Hence there is a change of basis of $V$ such that the representation $\frakg \into \mbox{End}_{\bbC_p}(V)$ is already defined over $L$. This gives the following result, where we call the group $G \subset \mbox{GL}_r(\bbC_p)$ virtually algebraic if it  contains an open subgroup which lies after a change of basis in $\mbox{GL}_r(\overline{\bbQ}_p)$.

\begin{thm} 
Let $V$ be a $\bbC_p$-vector space and let $\rho: G \to \mbox{GL}(V)$ be a continuous, semisimple and faithful representation of a compact and topologically finitely generated group $G$, with $\dim(V) \geq 2$. Furthermore, let $\mbox{det}^n(\rho)$ be trivial for some $n \in \bbN$. The group $G$ is virtually algebraic if and only if $G$ is a $p$-adic analytic group.
\end{thm}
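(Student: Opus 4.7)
My plan is to handle the two implications separately, with the reverse direction carrying almost all of the content.

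For the direction $(\Rightarrow)$, assume $G$ is virtually algebraic: after a change of basis some open subgroup $H \subseteq G$ lies in $\mbox{GL}_r(\overline{\bbQ}_p)$. Since $G$ is compact and topologically finitely generated, the open subgroup $H$ is of finite index and itself topologically finitely generated; its finitely many generators have entries in a single finite extension $K/\bbQ_p$, so the closed group they generate is contained in $\mbox{GL}_r(K)$. As $\mbox{GL}_r(K)$ is $p$-adic analytic and the class of such groups is closed under closed subgroups and finite extensions, $G$ is $p$-adic analytic.

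For the direction $(\Leftarrow)$, suppose $G$ is $p$-adic analytic. I first produce an open uniform pro-$p$ subgroup $U$, which I may shrink so that $U \subseteq U_0 = \ker(\mbox{GL}_r(\frako) \to \mbox{GL}_r(\frako/p^{\frac{1}{p-1}}\frako))$. By Lemma \ref{lie} the $p$-adic logarithm restricts to an isomorphism of $\bbZ_p$-Lie algebras $\Lambda(U) \cong \log_p(U) \subset M_r(\bbC_p)$ with inverse the matrix exponential, so $U = \exp(\log_p(U))$. Setting $\frakg := \bbQ_p \tensor_{\bbZ_p} \log_p(U)$, the semisimplicity of $\rho$ together with $\det^n(\rho) = 1$ makes $V$ a semisimple $\frakg(\bbC_p)$-module on which $\frakg(\bbC_p)$ acts by traceless endomorphisms; hence $\frakg(\bbC_p)$ is semisimple, and $\frakg$ itself is semisimple by Corollary \ref{genaudann}.

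I now invoke the descent argument stated just before the theorem. Choose a finite extension $L/\bbQ_p$ over which $\frakg$ splits; then $\frakg(L) \subset \mbox{End}_{\bbC_p}(V)$ is a split semisimple $L$-Lie algebra with the same root system as $\frakg(L)\tensor_L\bbC_p$, and the highest weight of the $\frakg(L)\tensor_L\bbC_p$-module $V$ already defines a $\frakg(L)$-module $W$ over $L$ together with a $\frakg(L)$-equivariant isomorphism $V \cong W\tensor_L\bbC_p$. Taking an $L$-basis of $W$ as a new basis of $V$, the embedding $\frakg \hookrightarrow \mbox{End}_{\bbC_p}(V)$ factors through $M_r(L)$, so $\log_p(U) \subset M_r(L)$. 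Exponentiating back gives $U = \exp(\log_p(U)) \subset \mbox{GL}_r(L) \subset \mbox{GL}_r(\overline{\bbQ}_p)$, and $G$ is virtually algebraic.

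The main obstacle is this descent step: one has to pass from a $\bbC_p$-linear embedding of the semisimple Lie algebra $\frakg$ to a single $L$-rational embedding via a change of basis of $V$ that is simultaneously compatible with $\log_p$ and $\exp$. Shrinking $U$ into $U_0$ at the start is precisely what makes $\log_p$ and $\exp$ honest mutual inverses, so that all the work can be carried out at the Lie algebra level and the group-theoretic conclusion is recovered by exponentiating.
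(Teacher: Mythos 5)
Your proposal is correct relative to the paper and takes essentially the same route in both directions: the forward implication via the fact that a topologically finitely generated compact subgroup of $\mbox{GL}_r(\overline{\bbQ}_p)$ lands in $\mbox{GL}_r(K)$ for a finite extension $K$, and the converse via a uniform open subgroup $U\subseteq U_0$, Lemma \ref{lie}, semisimplicity of $\frakg$ through Corollary \ref{genaudann}, the split-form descent over a finite extension $L$ carried out just before the theorem, and exponentiating $\Lambda(U)\subseteq M_r(L)$ back into $\mbox{GL}_r(L)$. The only step you state that deserves care --- deducing semisimplicity of $\frakg(\bbC_p)$ from semisimplicity of $\rho$ and triviality of $\det^n(\rho)$ --- is precisely the paper's own argument from Proposition \ref{algebr}, so your write-up adds no new risk and in fact spells out the forward direction in slightly more detail than the paper does.
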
 

\begin{proof}
We already mentioned above that the group $G$ is $p$-adic analytic if it contains an open subgroup $H$ defined over  $\overline{\bbQ}_p$. To show the converse, first recall that the Lie algebra $\frakg({\bbC_p}) \subseteq \mbox{End}_{\bbC_p}(V)$ is semisimple under the conditions of the theorem. By Corollary \ref{genaudann} and the considerations above we then find a finite field extension $L \supseteq \bbQ_p$  such that $\frakg(L)$ is a split semisimple Lie algebra, hence the embedding $\frakg(L) \subseteq \mbox{End}_{\bbC_p}(V)$ is already defined over the field $L$. The $p$-adic logarithm is a continuous and bijective map from an sufficiently small open, uniform subgroup $H$ of $G$ onto the $\bbZ_p$-Lie algebra $\Lambda(H)$, with $\frakg = \Lambda(H) \tensor \bbQ_p$, see Lemma \ref{lie}. We find that $H \subseteq \mbox{GL}(V)$ is also defined over $L$, since the valuation on $L$ is complete and hence $\mbox{exp}(\Lambda(H)) \subseteq \mbox{GL}_r(L)$.
\end{proof}

\begin{cor} \label{analmono}
Under the conditions of Corollary \ref{corollary10}, $G_{\rho_E}$ is virtually algebraic if and only if $G_{\rho_E}$ is $p$-adic analytic.
\end{cor}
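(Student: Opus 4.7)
My plan is to derive Corollary \ref{analmono} directly from the Theorem immediately preceding it, by verifying that the situation of Corollary \ref{corollary10} places the representation $\rho_E$ precisely in the setup of that theorem. Concretely, I will take $G := G_{\rho_E}$ together with the tautological inclusion $G_{\rho_E} \hookrightarrow \mathrm{GL}(E_x)$ as the ``continuous, semisimple and faithful representation of a compact and topologically finitely generated group'' appearing in the theorem, and then both directions of the equivalence become immediate: the $(\Rightarrow)$ direction is the general observation that closed subgroups of $\mathrm{GL}_r(\overline{\bbQ}_p)$ are $p$-adic analytic, while $(\Leftarrow)$ is exactly the content of the preceding theorem.

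To carry this out, I first need to check that $G_{\rho_E}$ itself is compact and topologically finitely generated. Compactness follows because $G_{\rho_E}$ is the continuous image of the compact group $\pi_1(X,x)$. For topological finite generation, I invoke Grothendieck's comparison theorem (as recalled earlier in this section): $\pi_1(X,x)$ is the profinite completion of the topological fundamental group of a compact Riemann surface of genus $g$, so it is topologically generated by $2g$ elements; the continuous surjection $\pi_1(X,x) \twoheadrightarrow G_{\rho_E}$ then transports these generators. Continuity and faithfulness of the inclusion $G_{\rho_E}\hookrightarrow \mathrm{GL}(E_x)$ are tautological. Semisimplicity of $G_{\rho_E}\hookrightarrow \mathrm{GL}(E_x)$ is a property of the image and hence follows from the hypothesis (in Corollary \ref{corollary10}) that $\rho_E$ itself is semisimple. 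The rank assumption $r\geq 2$ is inherited as $\dim(E_x)\geq 2$.

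The only hypothesis of the preceding theorem that requires a brief word is the triviality of some power of the determinant. Under Corollary \ref{corollary10} we have $\det(E)\iso \scrO_{X_{\bbC_p}}$; combining the functoriality of $\rho$ in Theorem \ref{hauptthm} (compatibility with tensor products) with the fact that $\rho$ commutes with the determinant as a one-dimensional quotient of a top exterior power, one obtains $\det(\rho_E) = \rho_{\det(E)} = 1$, so the condition $\det^n(\rho)$ trivial holds with $n=1$. (Even in the more general setting where only some power of $\det(E)$ is trivial, the same argument gives $\det^n(\rho)=1$ for the corresponding $n$.)

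With all hypotheses verified, the preceding theorem applies verbatim to the pair $(G_{\rho_E},\,G_{\rho_E}\hookrightarrow \mathrm{GL}(E_x))$ and yields the stated equivalence. I do not expect a serious obstacle here: the substantive work has already been done in the preceding theorem, whose proof relied on the Lie-algebraic analysis of Corollary \ref{corollary10} (semisimplicity of $\frakg$ via Corollary \ref{genaudann} and the identification $\frakg(\bbC_p) = a(\frakg) = \frakg_E$ from Proposition \ref{algebr}). The residual point to be careful about is only the translation of ``trivial determinant bundle'' into ``$\det^n(\rho_E)$ trivial'', which follows from the functorial properties of $\rho$ recorded in Theorem \ref{hauptthm}.
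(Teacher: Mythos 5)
Your proposal is correct and matches the paper's intent exactly: the paper states Corollary \ref{analmono} with no separate proof, treating it as an immediate specialization of the preceding theorem to $G=G_{\rho_E}$ with its tautological representation on $E_x$, which is precisely what you do (your explicit verification of compactness, topological finite generation via the $2g$ generators of $\pi_1(X,x)$, and the translation of $\det(E)\iso\scrO_{X_{\bbC_p}}$ into $\det(\rho_E)=1$ via the tensor-compatibility in Theorem \ref{hauptthm} only makes the paper's implicit steps explicit). The one cosmetic point is that Corollary \ref{corollary10} as literally stated already assumes $G_{\rho_E}$ is $p$-adic analytic, so ``under the conditions of Corollary \ref{corollary10}'' must be read as the hypotheses on $E$ and $\rho_E$ without that analyticity assumption, exactly as you have done.
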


Here  $G_{\rho_E}$ being virtually algebraic is equivalent to the existence of a finite \'etale covering $f:Y \to X$ such that $G_{\rho_{f^*E}}$ is defined over $\overline{\bbQ}_p$.
If we again assume that $G_{\rho_E}$ is a $p$-adic analytic group, we will show, using Theorem \ref{galois}, that its Lie algebra carries a continuous representation of the absolute Galois-group $\mbox{Gal}_K$ and explain some cases where one can describe this action in more detail.

\begin{prop} \label{aaa}
Let $E$ be a vector bundle with potentially strongly semistable reduction of degree $0$. Assume that the associated representation $\rho_E: \pi_1(X,x) \to \mbox{GL}(E_x)$ has a $p$-adic analytic image. For all $\sigma \in \mbox{Gal}_{\bbQ_p}$ the continuous isomorphism $\mathbf{C}_{\sigma}: \mbox{GL}(E_x) \Longto{\sim} \mbox{GL}({^{\sigma}\!E_x})$ induces an isomorphism $$\sigma_*:G_{\rho_E} \Longto{\sim}G_{\rho_{^{\sigma}\!\!E}}$$ of $p$-adic analytic groups. If $X$, $x \in X$ and $E$ are already defined over $K$, one in particular obtains for all $\sigma \in \mbox{Gal}_K$ an automorphism $$\sigma_*:G_{\rho_E} \Longto{\sim} G_{\rho_E}$$ of $p$-adic analytic groups.
\end{prop}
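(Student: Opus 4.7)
The plan is to extract the set-theoretic content from Theorem \ref{galois} and then promote it to an isomorphism of $p$-adic analytic groups via Lazard's criterion (Theorem \ref{lazard}). Applied to the bundle $E$, Theorem \ref{galois} gives the identity of representations $\mathbf{C}_{\sigma}(\rho_E) = \rho_{{^\sigma}\!E}$. Unwinding the definition of $\mathbf{C}_{\sigma}$ on representations, this reads $\rho_{{^\sigma}\!E}(\gamma) = \sigma_*(\rho_E(\sigma_*^{-1}\gamma))$ for $\gamma \in \pi_1({^\sigma}\!X,{^\sigma}\!x)$, where $\sigma_*:\mbox{GL}(E_x)\to\mbox{GL}({^\sigma}\!E_x)$ denotes the $\sigma$-linear map underlying $\mathbf{C}_{\sigma}$. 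Since $\sigma_*:\pi_1(X,x)\to\pi_1({^\sigma}\!X,{^\sigma}\!x)$ is an isomorphism of groups, taking images yields the set-theoretic equality $G_{\rho_{{^\sigma}\!E}} = \mathbf{C}_{\sigma}(G_{\rho_E})$. By Remark \ref{basis}, after choosing bases of $E_x$ and ${^\sigma}\!E_x$ the map $\mathbf{C}_{\sigma}$ is identified with the entrywise action of $\sigma$ on $\mbox{GL}_r(\bbC_p)$; as $\sigma$ extends uniquely to a $p$-adic isometry of $\bbC_p$, this is a homeomorphism, so the restriction gives a continuous group isomorphism $\sigma_*:G_{\rho_E}\to G_{\rho_{{^\sigma}\!E}}$.

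Next I would upgrade this to an isomorphism of $p$-adic analytic groups. The hypothesis on $G_{\rho_E}$ together with Lazard's criterion (Theorem \ref{lazard}) says that $G_{\rho_E}$ contains an open, finitely generated, powerful pro-$p$ subgroup -- a property intrinsic to the underlying topological group. Transporting it across the homeomorphism just obtained shows that $G_{\rho_{{^\sigma}\!E}}$ is again $p$-adic analytic. The final step invokes the standard automatic-analyticity theorem of $p$-adic Lie theory: every continuous group homomorphism between $p$-adic analytic groups is automatically analytic. Applied to $\sigma_*$, this promotes the continuous group isomorphism to an isomorphism of $p$-adic analytic groups.

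For the second assertion, assume that $X$, $x\in X(K)$ and $E$ are defined over $K\supseteq\bbQ_p$ and pick $\sigma\in\mbox{Gal}_K$. Corollary \ref{galo} then supplies canonical $\bbC_p$-identifications ${^\sigma}\!X_{\bbC_p}=X_{\bbC_p}$, ${^\sigma}\!x=x$ and ${^\sigma}\!E=E$, under which $\rho_{{^\sigma}\!E}$ becomes a representation of $\pi_1(X,x)$ on $E_x$ and coincides with $\rho_E$. The identity $\rho_{{^\sigma}\!E}(\gamma)=\sigma_*(\rho_E(\sigma_*^{-1}\gamma))$ from the first paragraph then reads $\rho_E(\gamma)=\sigma_*(\rho_E(\sigma_*^{-1}\gamma))$, and taking images forces $\sigma_*(G_{\rho_E})=G_{\rho_E}$. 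Combined with the first part this delivers the required automorphism of $G_{\rho_E}$ as a $p$-adic analytic group.

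The only genuinely non-routine ingredient is the automatic-analyticity theorem invoked in the second paragraph: this is what the phrase ``of $p$-adic analytic groups'' in the conclusion actually buys over a purely topological statement, and it is the place where the Lie-theoretic content behind Theorem \ref{lazard} really enters. Everything else is a diagram chase combined with the fact that every element of $\mbox{Gal}_{\bbQ_p}$ acts $p$-adically continuously on $\bbC_p$.
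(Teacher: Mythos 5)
Your proposal is correct and follows essentially the same route as the paper: both start from the commutative diagram of Theorem \ref{galois}, observe that $\sigma_*$ restricts to a continuous isomorphism of the images, invoke the automatic-analyticity theorem for continuous morphisms of $p$-adic analytic groups (the paper cites Dixon et al., Theorem 10.5), and use Corollary \ref{galo} for the second assertion. Your extra step of transporting Lazard's criterion to see that $G_{\rho_{{^\sigma}\!E}}$ is itself $p$-adic analytic is a detail the paper leaves implicit, but it is the same argument.
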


\begin{proof}
By Theorem \ref{galois}  we get a commutative diagram $$\xymatrix{ \pi_1(X,x) \ar[r]^{\rho_E} \ar[d]_{\wr}^{\sigma_*} & \mbox{GL}(E_x) \ar[d]_{\wr}^{\sigma_*} \\ \pi_1({^{\sigma}\!X},{^{\sigma}}\!x) \ar[r]^{\rho_{^{\sigma}\!\!E}} & \mbox{GL}({^{\sigma}\!E_x}).}$$ One easily sees that the continuous morphism $\sigma_*: \mbox{GL}(E_x) \To{\sim} \mbox{GL}(^{\sigma}\!E_x)$ restricts to a continuous isomorphism $\sigma_*:G_{\rho_E} \To{\sim}G_{\rho_{^{\sigma}\!\!E}}$ of groups, which follows at once from the functoriality of $\sigma_*$. Due to \cite{dix}, Theorem 10.5, every continuous morphism of $p$-adic groups is analytic. By Corollary \ref{galo} one gets the second assertion.
\end{proof}

Let us denote by $\frakg_{\rho_E}$ resp. $\mathfrak{g}_{\rho_{^{\sigma}\!\!E}}$ the Lie algebras of $G_{\rho_E}$ resp. $G_{\rho_{^{\sigma}\!\!E}}$. The morphism $\sigma_*: G_{\rho_E} \to G_{\rho_{^{\sigma}\!\!E}}$ defines a morphism of Lie algebras $\sigma_*: \frakg_{\rho_E} \to \mathfrak{g}_{\rho_{^{\sigma}\!\!E}}$. After a change of basis of the $\bbC_p$-vector space $E_x$ we may assume that $G_{\rho_E}$ lies in $\mbox{GL}_r(\frako)$. Since the $p$-adic logarithm $\mbox{log}_p: \mbox{GL}_r(\frako) \to M_r(\bbC_p)$ commutes with the Galois actions, Lemma \ref{lie} implies that $\frakg_{\rho_E}$ and $\mathfrak{g}_{\rho_{^{\sigma}\!\!E}}$ lie in $M_r(\bbC_p)$ and that $\sigma_*: \frakg_{\rho_E} \to \mathfrak{g}_{\rho_{^{\sigma}\!\!E}}$ is the morphism $A \mapsto \sigma(A)$ for a matrix $A \in \frakg_{\rho_E} \subseteq M_r(\bbC_p)$. Since the morphism $\mbox{Gal}_{\bbQ_p} \times M_r(\bbC_p) \to M_r(\bbC_p), \; (\sigma, A) \mapsto \sigma(A)$ is continuous, one obtains a continuous map $\mbox{Gal}_{\bbQ_p} \to \mbox{Iso}(\frakg_{\rho_E}, \mathfrak{g}_{\rho_{^{\sigma}\!\!E}})$.\\  

\begin{cor} \label{galois2}
There is a continuous map $$\mbox{Gal}_{\bbQ_p} \longto \mbox{Iso}(\frakg_{\rho_E}, \frakg_{\rho_{^{\sigma}\!\!E}}).$$ If $X$, $x \in X$ and $E$ are already defined over $K \supseteq \bbQ_p$, one obtains a continuous representation $$\mbox{Gal}_K \longto \mbox{Aut}(\frakg_{\rho_E}).$$
\end{cor}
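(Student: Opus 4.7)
My plan is to package the argument that the paragraph immediately preceding the corollary already sketches, and verify that the Galois equivariance is continuous for the topology on the target.

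First, I would start from Proposition \ref{aaa}, which provides for each $\sigma \in \mbox{Gal}_{\bbQ_p}$ an isomorphism $\sigma_* : G_{\rho_E} \To{\sim} G_{\rho_{{}^{\sigma}\!E}}$ of $p$-adic analytic groups. Functoriality of the Lie algebra construction yields an induced isomorphism $\sigma_* : \frakg_{\rho_E} \To{\sim} \frakg_{\rho_{{}^{\sigma}\!E}}$ of finite-dimensional $\bbQ_p$-Lie algebras, so set-theoretically we have the desired map $\mbox{Gal}_{\bbQ_p} \to \mbox{Iso}(\frakg_{\rho_E}, \frakg_{\rho_{{}^{\sigma}\!E}})$.

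Next I would make this isomorphism explicit in order to check continuity. After a change of basis we may assume $G_{\rho_E} \subseteq \mbox{GL}_r(\frako)$; after shrinking to an open uniform subgroup $H \subseteq G_{\rho_E}$ inside the kernel of reduction modulo $p^{1/(p-1)}$, Lemma \ref{lie} identifies $\Lambda(H)$ with $\mbox{log}_p(H) \subseteq M_r(\bbC_p)$, and tensoring with $\bbQ_p$ realizes $\frakg_{\rho_E}$ as a $\bbQ_p$-subspace of $M_r(\bbC_p)$. The same procedure on the $\sigma$-twist realizes $\frakg_{\rho_{{}^{\sigma}\!E}}$ inside $M_r(\bbC_p)$, and because the $p$-adic logarithm commutes with the Galois action on $M_r(\bbC_p)$ (which acts entrywise) and with the isomorphism $\sigma_*$ coming from Remark \ref{basis}, the map $\sigma_*$ on Lie algebras is simply the restriction of the entrywise action $A \mapsto \sigma(A)$.

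Now continuity is straightforward: the action $\mbox{Gal}_{\bbQ_p} \times M_r(\bbC_p) \to M_r(\bbC_p)$, $(\sigma, A) \mapsto \sigma(A)$, is continuous because the Galois action on $\bbC_p$ is continuous. Restricting to $\frakg_{\rho_E}$ and taking adjoints gives a continuous map $\mbox{Gal}_{\bbQ_p} \to \mbox{Iso}(\frakg_{\rho_E}, \frakg_{\rho_{{}^{\sigma}\!E}})$, where the target is topologized as an open subset of the finite-dimensional $\bbC_p$-vector space $\mbox{Hom}(\frakg_{\rho_E}, M_r(\bbC_p))$.

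Finally, for the second assertion I would invoke Corollary \ref{galo}: if $X$, $x$ and $E$ descend to $K \supseteq \bbQ_p$, then for $\sigma \in \mbox{Gal}_K$ one has canonical identifications $X_{\bbC_p} \iso {}^{\sigma}\!X_{\bbC_p}$ and $E_{\bbC_p} \iso {}^{\sigma}\!E_{\bbC_p}$ compatible with the representations, so $G_{\rho_{{}^{\sigma}\!E}} = G_{\rho_E}$ and hence $\frakg_{\rho_{{}^{\sigma}\!E}} = \frakg_{\rho_E}$. The map from the first part therefore specializes to a continuous homomorphism $\mbox{Gal}_K \to \mbox{Aut}(\frakg_{\rho_E})$; the homomorphism property is automatic from the functoriality $(\sigma\tau)_* = \sigma_* \circ \tau_*$. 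There is no real obstacle here — everything is a repackaging — but the one point that deserves care is the verification that the abstract isomorphism of Lie algebras produced by Proposition \ref{aaa} agrees, under Lemma \ref{lie}, with the entrywise Galois action on matrices; once this identification is nailed down, continuity is immediate.
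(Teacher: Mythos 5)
Your proposal is correct and follows essentially the same route as the paper: the paragraph preceding the corollary is the paper's proof, and it likewise passes from Proposition \ref{aaa} to the Lie algebras via functoriality, identifies $\frakg_{\rho_E}$ and $\frakg_{\rho_{^{\sigma}\!\!E}}$ inside $M_r(\bbC_p)$ through $\log_p$ and Lemma \ref{lie} after arranging $G_{\rho_E} \subseteq \mbox{GL}_r(\frako)$, recognizes $\sigma_*$ as the entrywise Galois action, and deduces continuity from that of $\mbox{Gal}_{\bbQ_p} \times M_r(\bbC_p) \to M_r(\bbC_p)$, with Corollary \ref{galo} handling the descent to $K$. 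Your added care about topologizing the target and checking $(\sigma\tau)_* = \sigma_* \circ \tau_*$ only makes the argument more complete.
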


Let us from now on assume that all conditions of Proposition \ref{aaa} are fulfilled and that further $X$, $x \in X$ and $E$ are defined over some field extension $K \supseteq \bbQ_p$. 
The $\mbox{Gal}_K$-equivariant embedding $\frakg \into \mbox{End}_{\bbC_p}(V)$ implies that the upper exact row in diagram (\ref{dia2}) consists of $\mbox{Gal}_K$-modules. In fact, the Galois group $\mbox{Gal}_K$ acts on the radical $r(\frakg)$ because $\sigma \in \mbox{Gal}_K$ is an automorphism of the Lie algebra $\frakg$ and it maps $r(\frakg)$ again onto some maximal solvable ideal. But since the radical is the unique maximal solvable ideal of $\frakg$, see \cite{bour3}, Chapitre I, page 63, we find that $\sigma(r(\frakg)) = r(\frakg)$. The lower exact row of diagram (\ref{dia2}) then consists of semilinear $\mbox{Gal}_K$-representations, where a finite-dimensional $\bbC_p$-vector space $V$ is called a semilinear $\mbox{Gal}_K$-representation if there is a continuous morphism $\mbox{Gal}_K \times V \to V$ such that for all $\sigma \in \mbox{Gal}_K$ we have $\sigma(\lambda v) = \sigma(\lambda)\sigma(v)$ for all $\lambda \in \bbC_p, v \in V$, and $\sigma(v_1 + v_2) = \sigma(v_1) + \sigma(v_2)$ for all $v_1,v_2 \in V$. Furthermore, all vertical embeddings are $\mbox{Gal}_K$-equivariant. With the same reasoning one sees that diagram (\ref{dia1}) is a commutative diagram of semilinear $\mbox{Gal}_K$-modules and $\mbox{Gal}_K$-equivariant morphisms.\\

A semilinear $\mbox{Gal}_K$-representation $V$ is of Hodge-Tate type  if there exists a basis $v_1,\dots,v_n$ of $V$ with $\sigma(v_k) = \chi(\sigma)^{i_k}v_k$ for $i_k \in \bbZ$ and all $\sigma \in \mbox{Gal}_K$, where $\chi: G_K \to \bbZ_p^*$ denotes the cyclotomic character, see e.g. \cite{ser1}. The integers $i_k$ are called the Hodge-Tate weights of this representation.

We will need the following Lemma.

\begin{lemma} \label{gitter}
Let $\frakg \into \mbox{End}_{\bbC_p}(V)$ be a semisimple Lie algebra over $\bbQ_p$ and let $L \supseteq \bbQ_p$ be a field extension  such that $\frakg \tensor L$ is split semisimple. Then $\frakg(L)$ is a complete $L$-lattice in $\frakg({\bbC_p})$, i.e. $\frakg(L) \tensor_L \bbC_p = \frakg({\bbC_p})$.
\end{lemma}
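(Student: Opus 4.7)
The plan is to show that the natural $\bbC_p$-linear map
$$\psi: \frakg(L) \tensor_L \bbC_p \longto \frakg({\bbC_p}),$$
induced by the inclusion $\frakg(L) \subseteq \mbox{End}_{\bbC_p}(V)$ via $x \tensor c \mapsto cx$, is an isomorphism. Surjectivity is immediate: $\frakg({\bbC_p})$ is by definition the $\bbC_p$-linear span of $\frakg$, which coincides with the $\bbC_p$-span of the $L$-module $\frakg(L) = L\cdot\frakg$. The content of the lemma is therefore the injectivity of $\psi$.

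First I would upgrade the hypothesis that $\frakg \tensor_{\bbQ_p} L$ is split semisimple to the assertion that $\frakg(L)$ itself is split semisimple over $L$. The multiplication map $\frakg \tensor_{\bbQ_p} L \onto \frakg(L)$, $x \tensor \lambda \mapsto \lambda x$, is a surjective Lie algebra homomorphism whose kernel is an ideal of the semisimple algebra $\frakg \tensor L$, hence a direct summand corresponding to a subset of its split simple factors. The complementary summand is isomorphic to $\frakg(L)$, so I may decompose $\frakg(L) = \mathfrak{s}_1 \oplus \cdots \oplus \mathfrak{s}_n$ into split simple $L$-ideals.

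The crucial structural input is that in characteristic zero a split simple Lie algebra is absolutely simple: its Chevalley description in terms of a Dynkin diagram is base-change invariant. Consequently each $\mathfrak{s}_i \tensor_L \bbC_p$ is still simple over $\bbC_p$, and $\frakg(L) \tensor_L \bbC_p = \bigoplus_{i=1}^n (\mathfrak{s}_i \tensor_L \bbC_p)$ is the decomposition into simple summands. Every Lie ideal, in particular $\ker \psi$, is therefore a sum of some of the $\mathfrak{s}_i \tensor_L \bbC_p$.

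To conclude, I would rule out that any $\mathfrak{s}_i \tensor_L \bbC_p$ lies in $\ker \psi$: the restriction of $\psi$ to $\mathfrak{s}_i \tensor 1 \iso \mathfrak{s}_i$ is literally the tautological inclusion $\mathfrak{s}_i \subseteq \frakg(L) \subseteq \mbox{End}_{\bbC_p}(V)$, so $\mathfrak{s}_i \tensor_L \bbC_p \subseteq \ker \psi$ would already force $\mathfrak{s}_i = 0$, contradicting simplicity. Hence $\ker \psi = 0$ and $\psi$ is the desired isomorphism. The main conceptual obstacle is resisting the temptation to identify $\frakg(L)$ with $\frakg \tensor_{\bbQ_p} L$ outright: these can genuinely differ, for instance when $\frakg$ arises by restriction of scalars from a Lie algebra over an intermediate field, so that $\frakg \tensor L$ has several simple factors collapsing to the same image. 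Working instead with $\frakg(L)$ as its own split semisimple Lie algebra, so that absolute simplicity of its split simple factors is at one's disposal, is what makes the argument go through.
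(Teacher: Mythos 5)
Your proposal is correct and follows essentially the same route as the paper: both pass to the surjection $\frakg \tensor_{\bbQ_p} L \onto \frakg(L)$ to see that $\frakg(L)$ is split semisimple, decompose it into split (hence absolutely) simple ideals, and deduce that each $\mathfrak{s}_i \tensor_L \bbC_p$ stays simple. The only (cosmetic) difference is the final injectivity step: the paper assumes a nontrivial kernel, deduces that the $\bbC_p$-spans of two simple factors intersect and hence coincide, and gets a contradiction from $[\frakg_1,\frakg_2]=0$, whereas you observe that $\ker\psi$, being an ideal, is a sum of the simple summands and that no $\mathfrak{s}_i \tensor_L \bbC_p$ can lie in it since $\psi$ restricts to the tautological inclusion on $\mathfrak{s}_i \tensor 1$ --- a slightly cleaner finish resting on the same structural inputs.
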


\begin{proof}
Consider the surjective morphism $\frakg \tensor L \onto \frakg(L)$. Hence $\frakg(L) \subset \mbox{End}_{\bbC_p}(V)$  is a split semisimple Lie algebra over $L$. Let $\frakg(L) = \frakg_1 \oplus \dots \oplus \frakg_n$ with split simple Lie algebras $\frakg_i$. These are totally simple due to \cite{bour3}, Chapitre VIII, \S3.2, Corollaire 2, i.e. $\frakg_i \tensor_L \bbC_p$ is also simple. Let us assume that the map $\frakg(L) \tensor_L \bbC_p \onto \frakg_{\bbC_p}$ has a nontrivial kernel. Without restriction of generality we have $\frakg_1 \tensor \bbC_p \cap \frakg_2 \tensor \bbC_p \neq \{0\}$ in $\mbox{End}_{\bbC_p}(V)$, and it follows that $\frakg_1 \tensor \bbC_p \subseteq \frakg_2 \tensor \bbC_p$ since both are simple Lie algebras. Furthermore, we have $[\frakg_1, \frakg_2] = \{0\}$, hence $[\frakg_1 \tensor \bbC_p, \frakg_2 \tensor \bbC_p] = \{0\}$. Then $$\frakg_1 = [\frakg_1, \frakg_1] \subseteq [\frakg_1 \tensor \bbC_p, \frakg_1 \tensor \bbC_p] \subseteq [\frakg_1 \tensor \bbC_p, \frakg_2 \tensor \bbC_p] = \{0\},$$ which is a contradiction, so the kernel of this map has to be trivial. 
\end{proof}  

\begin{prop} \label{qwer}
For the $\mbox{Gal}_K$-modules $\frakg_{\rho_E}$ in Corollary \ref{galois2} the following holds:
\begin{enumerate} 
\item If $\frakg_{\rho_E}$ is abelian, then there exists a finite \'etale covering $Y \to X$ and a surjective morphism $T_pJ\tensor_{\bbZ_p}\bbQ_p \longonto \frakg_{\rho_E}$ of Galois-modules, where $J$ denotes the Jacobian of the curve $Y$. In particular, $\frakg_{\rho_E} \tensor_{\bbQ_p} \bbC_p$ is of Hodge-Tate type with the only possible Hodge-Tate weights $0$ und $1$.   
\item If $\frakg_{\rho_E}$ is a split semisimple Lie algebra with splitting field $L \supseteq \bbQ_p$, and if $M$ is the smallest field containing $K$ and $L$, then the semilinear $\mbox{Gal}_M$-representation $\frakg_{{\rho_E},L} \tensor_L \bbC_p$ is of Hodge-Tate type with weight $0$.\end{enumerate}
\end{prop}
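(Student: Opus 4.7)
The plan is to realise $\frakg_{\rho_E,L}\otimes_L\bbC_p$ in each case as a Galois-equivariant quotient (in part~(1)) or subrepresentation (in part~(2)) of an explicit semilinear $\mbox{Gal}$-representation whose Hodge--Tate weights are already known, and then to read them off.

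For part~(1), I would first exploit that an abelian Lie algebra $\frakg_{\rho_E}$ forces $G_{\rho_E}$ to contain an open abelian subgroup (apply $\log/\exp$ to a uniform pro-$p$ subgroup, cf.\ Lemma~\ref{lie}). Its preimage in $\pi_1(X,x)$ is an open subgroup of finite index, corresponding by the Galois theory of \'etale coverings to a finite \'etale cover $f:Y\to X$. After replacing $E$ by $f^*E$ the representation $\rho_{f^*E}=\rho_E|_{\pi_1(Y,y)}$ has abelian image, while the Lie algebra remains $\frakg_{\rho_E}$, so we may assume $G_{\rho_E}$ itself is abelian. Then $\rho_E$ factors through $\pi_1(Y,y)^{\mathrm{ab}}$, and since $G_{\rho_E}$ is a compact abelian $p$-adic analytic group, through its maximal pro-$p$ quotient. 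The classical comparison with the topological fundamental group identifies this quotient Galois-equivariantly with $T_pJ$, where $J$ is the Jacobian of $Y$. Applying $\log_p$---which commutes with $\mbox{Gal}_K$, being given by a power series with $\bbQ_p$-coefficients---yields the desired Galois-equivariant surjection $V_pJ=T_pJ\otimes_{\bbZ_p}\bbQ_p\onto\frakg_{\rho_E}$. Finally, Tate's theorem on the Hodge--Tate decomposition of Tate modules of abelian varieties (extended by Raynaud and Fontaine to arbitrary reduction) shows that $V_pJ\otimes_{\bbQ_p}\bbC_p$ is Hodge--Tate with weights $0$ and $1$; since the Hodge--Tate property descends to Galois-equivariant quotients, the same holds for $\frakg_{\rho_E}\otimes\bbC_p$.

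For part~(2), the argument is more direct. Since $X$, $E$ and the point $x$ are defined over $K$, the fibre $V=E_x$ is of the form $V_0\otimes_K\bbC_p$ for a $K$-vector space $V_0$; hence $V$ is a trivial semilinear $\mbox{Gal}_K$-representation, i.e.\ Hodge--Tate of weight $0$, and consequently so is $\mbox{End}_{\bbC_p}(V)\iso V\otimes V^*$. Next I would verify that $\frakg_{\rho_E}(\bbC_p)\subseteq\mbox{End}_{\bbC_p}(V)$ is a $\mbox{Gal}_K$-stable $\bbC_p$-subspace: by Corollary~\ref{galois2} the $\bbQ_p$-subspace $\frakg_{\rho_E}$ is stable under $\mbox{Gal}_K$, and the semilinear action on $\mbox{End}_{\bbC_p}(V)$ then preserves its $\bbC_p$-span. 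Since the category of Hodge--Tate representations is abelian and closed under subrepresentations, $\frakg_{\rho_E}(\bbC_p)$ is itself Hodge--Tate of weight $0$ as a $\mbox{Gal}_K$-representation, and a fortiori as a $\mbox{Gal}_M$-representation. The split-semisimplicity hypothesis enters precisely via Lemma~\ref{gitter}, which supplies the identification $\frakg_{\rho_E,L}\otimes_L\bbC_p=\frakg_{\rho_E}(L)\otimes_L\bbC_p\iso\frakg_{\rho_E}(\bbC_p)$ carrying the appropriate $\mbox{Gal}_M$-action.

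The main technical obstacle lies in part~(1): in carefully setting up the Galois-equivariant isomorphism between the maximal pro-$p$ abelian quotient of $\pi_1(Y,y)$ and $T_pJ$, and in the book-keeping required when the cover $Y$ is defined only over a finite extension $K'\supseteq K$, so that the Galois-equivariance of the final surjection is understood with respect to $\mbox{Gal}_{K'}$. Once these classical identifications and the compatibility of $\log_p$ with the Galois action are in place, part~(1) reduces to an invocation of Tate's theorem, and part~(2) follows essentially formally from the standard properties of the Hodge--Tate formalism.
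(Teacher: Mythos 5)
Your proposal is correct and follows essentially the same route as the paper: in part (1) both arguments pass to an open uniform (hence abelian) pro-$p$ subgroup $H$, realize it as a Galois-equivariant quotient of $\pi_1^{\mathrm{ab}}(Y,y)\iso TJ$ for a finite \'etale cover $Y\to X$, take Lie algebras to get $T_pJ\tensor\bbQ_p\onto\frakg_{\rho_E}$, and invoke the Hodge--Tate decomposition of $V_pJ$; in part (2) both use Lemma \ref{gitter} to identify $\frakg_{\rho_E,L}\tensor_L\bbC_p$ with the $\mbox{Gal}_M$-stable subspace $\frakg_{\rho_E,\bbC_p}\subseteq\mbox{End}_{\bbC_p}(E_x)$ and conclude from the fact that the latter is the trivial semilinear module of weight $0$. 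Your explicit justification via $\log/\exp$ that the uniform subgroup is abelian fills in a step the paper leaves implicit.
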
    

\begin{proof}
\begin{enumerate} \item Let $H \subseteq G_{\rho_E}$ be an open, uniform pro-$p$ subgroup. Then there is a finite \'etale covering $Y \to X$ with a $\mbox{Gal}_K$-equivariant and surjective morphism $\pi_1(Y,y) \onto H$. Since $H$ is abelian, this morphism factors through $\pi_1^{ab}(Y,y) \onto H$. If $J$ denotes the Jacobian variety of the curve $Y$ and $TJ$ its Tate module, then there is a $\mbox{Gal}_K$-equivariant isomorphism $TJ \iso \pi_1^{ab}(Y,y)$ (see \cite{miln} Proposition 9.1), and since $H$ is a pro-$p$ group, one gets a surjection $T_pJ \onto H.$ If we consider the free $\bbZ_p$-module $T_pJ$ as an abelian pro-$p$ group, we get a $\mbox{Gal}_K$-equivariant surjective morphism between the Lie algebras $\frakg(T_pJ) = T_pJ \tensor \bbQ_p \onto \frakg(H) = \frakg_{\rho_E}.$ Finally, by \cite{ser2} we know that $T_pJ$ is of Hodge-Tate type with weights $0$ and $1$, so its quotient $\frakg_{\rho_E}$ also possesses at most the weights $0$ and $1$.
    
\item  By Lemma \ref{gitter}, after tensoring with $\bbC_p$ the $\mbox{Gal}_M$-module $\frakg_{\rho_E,L}$ equals the semilinear $\mbox{Gal}_K$-module $\frakg_{\rho_E, \bbC_p} \subseteq \mbox{End}_{\bbC_p}(E_x)$. Now $\mbox{End}_{\bbC_p}(E_x) \iso M_r(\bbC_p)$ is the trivial semilinear module, hence of Hodge-Tate type with weights 0, and the claim for the semilinear submodule $\frakg_{\rho_E, \bbC_p}$ follows at once. \end{enumerate} 
\end{proof}

We will  give a description of the Galois actions occuring in Proposition \ref{qwer}. We denote by $\overline{\psi(I_K)}$ the Zariski closure of the image of the inertia group $I_K$ of $\mbox{Gal}_K$ with respect to the representation $\psi:G_K \to \mbox{Aut}(\frakg_{\rho_E})$. J. P.  Serre proved in \cite{ser1} that the $p$-adic analytic group $\psi(I_K)$ is open in $\overline{\psi(I_K)}$, i.e. its Lie algebra is algebraic. Furthermore, for semilinear modules as in Proposition \ref{qwer}$.1$, if the representation $\mbox{Gal}_K \to \mbox{Aut}(\frakg_{\rho_E})$ is semisimple, the irreducible root systems of $\overline{\psi(I_K)}^0$ can only be the classical ones of type $A,B,C$ or $D$. For an irreducible $\mbox{Gal}_K$-module of odd dimension the root system even has to be of type $A$ (see, \cite{ser1}, Th\'eor\`eme 7). If the semilinear $\mbox{Gal}_K$-module is of Hodge-Tate type with weights $0$, we can describe the $\mbox{Gal}_K$ action using the work of S. Sen. Assume that the semisimple Lie algebra in Proposition \ref{qwer}.$2$ splits already over $\bbQ_p$. Then $\psi(I_K)$ is finite, hence there is a finite field extension $L \supseteq K$  such that the inertia group of $G_L$ acts trivially on $G_{\rho_E}$, see \cite{sen1}, Theorem 11.

\bibliography{literatur}
\bibliographystyle{alpha}

\end{document}